 \newtheorem{introtheorem}{Theorem}
 \crefname{introtheorem}{Theorem}{Theorems}
 \Crefname{introtheorem}{Theorem}{Theorems}
  \newtheorem{introthm}[introtheorem]{Theorem}
   \crefname{introthm}{theorem}{theorems}
 \Crefname{introthm}{Theorem}{Theorems}
  \crefname{introcorollary}{Corollary}{Corollaries}
 \Crefname{introcorollary}{Corollary}{Corollaries}
 \newtheorem{introcor}[introtheorem]{Corollary}
   \crefname{introcor}{Corollary}{Corollaries}
 \Crefname{introcor}{Corollary}{Corollaries}
   \crefname{introconjecture}{Conjectures}{Conjectures}
 \Crefname{introconjecture}{Conjecture}{Conjectures}
    \crefname{introconj}{Conjectures}{Conjectures}
 \Crefname{introconj}{Conjecture}{Conjectures}
     \crefname{introlem}{Lemma}{Lemmas}
 \Crefname{introlem}{Lemma}{Lemmas}
 \crefname{introremark}{Remark}{Remarks}
 \Crefname{introremark}{Remark}{Remarks}
  \crefname{introrem}{Remark}{Remarks}
 \Crefname{introrem}{Remark}{Remarks}
   \crefname{introprop}{Proposition}{Propositions}
 \Crefname{introprop}{Proposition}{Propositions}
   \crefname{introdefn}{Definition}{Definitions}
 \Crefname{introdefn}{Definition}{Definitions}
   \crefname{intronotn}{Notation}{Notations}
 \Crefname{intronotn}{Notation}{Notations}
   \crefname{introtask}{Task}{Tasks}
 \Crefname{introtask}{Task}{Tasks}
  \crefname{introprob}{Problem}{Problems}
 \Crefname{introprob}{Problem}{Problems}
   \crefname{introquestion}{question}{questions}
 \Crefname{introquestion}{Question}{Questions}
 \crefname{theorem}{Theorem}{Theorems}
 \Crefname{theorem}{Theorem}{Theorems}
  \crefname{thm}{Theorem}{Theorems}
 \Crefname{thm}{Theorem}{Theorems}
  \crefname{corollary}{Corollary}{Corollaries}
 \Crefname{corollary}{Corollary}{Corollaries}
   \crefname{cor}{Corollary}{Corollaries}
 \Crefname{cor}{Corollary}{Corollaries}
   \crefname{conjecture}{Conjectures}{Conjectures}
 \Crefname{conjecture}{Conjecture}{Conjectures}
    \crefname{conj}{Conjectures}{Conjectures}
 \Crefname{conj}{Conjecture}{Conjectures}
     \crefname{lem}{Lemma}{Lemmas}
 \Crefname{lem}{Lemma}{Lemmas}
      \crefname{lemma}{Lemma}{Lemmas}
 \Crefname{lemma}{Lemma}{Lemmas}
 \crefname{remark}{remark}{remarks}
 \Crefname{remark}{Remark}{Remarks}
  \crefname{rem}{Remark}{Remarks}
 \Crefname{rem}{Remark}{Remarks}
   \crefname{rem}{Remark}{Remarks}
 \Crefname{rem}{Remark}{Remarks}
   \crefname{proposition}{Proposition}{Proposition}
 \Crefname{proposition}{Proposition}{Proposition}
    \crefname{prop}{Proposition}{Propositions}
 \Crefname{prop}{Proposition}{Propositions}
   \crefname{defn}{Definition}{Definitions}
 \Crefname{defn}{Definition}{Definitions}
   \crefname{notn}{Notation}{Notations}
 \Crefname{notn}{Notation}{Notations}
   \crefname{task}{Task}{Tasks}
 \Crefname{task}{Task}{Tasks}
  \crefname{prob}{Problem}{Problems}
 \Crefname{prob}{Problem}{Problems}
   \crefname{question}{Question}{Questions}
 \Crefname{question}{Question}{Questions}
\theoremstyle{plain}
\newtheorem{theorem}{Theorem}[subsection]
\newtheorem{lemma}[theorem]{Lemma}
\newtheorem{lem}[theorem]{Lemma}
\newtheorem{prop}[theorem]{Proposition}
\newtheorem{cor}[theorem]{Corollary}
\newtheorem{thm}[theorem]{Theorem}
\newtheorem*{thm*}{Theorem}
\newtheorem*{prop*}{Proposition}
\theoremstyle{definition}
\theoremstyle{remark}
\newtheorem{remark}[theorem]{Remark}
\newtheorem{defn}[theorem]{Definition}
\newtheorem{notn}[theorem]{Notation}
\newtheorem{conj}[theorem]{Conjecture}
\newcommand{\bfB}{ B}
\newcommand{\bfG}{G}
\newcommand{\bfH}{H}
\newcommand{\bfP}{P}
\newcommand{\bfS}{S}
\newcommand{\bfU}{U}
\newcommand{\bfX}{X}
\newcommand{\ha}{\mathfrak h}
\newcommand{\NC}{\mathcal N}
\newcommand{\N}{\mathbb N}
\newcommand{\R}{\mathbb R}
\newcommand{\C}{\mathbb C}
\newcommand{\BB}{\mathcal B}
\newcommand{\bA}{\mathbb A}
\newcommand{\bP}{\mathbb P}
\newcommand{\cE}{\mathcal E}
\newcommand{\cL}{\mathcal L}
\newcommand{\cM}{\mathcal{M}}
\newcommand{\cN}{\mathcal{N}}
\newcommand{\cO}{\mathcal O}
\newcommand{\cU}{\mathcal U}
\newcommand{\fb}{\mathfrak b}
\newcommand{\fg}{\mathfrak g}
\newcommand{\fh}{\mathfrak h}
\newcommand{\fz}{\mathfrak z}
\newcommand{\g}{\mathfrak g}
\newcommand{\h}{\mathfrak h}
\newcommand{\oH}{\operatorname H}
\newcommand{\Sc}{\mathcal{S}}
\newcommand{\Cc}{\mathrm{C}_c^{\infty}}
\newcommand{\dist}{\mathcal{D}'}
\newcommand{\Fre}{{Fr\'{e}chet \,}}
\newcommand{\alp}{{\alpha}}
\DeclareMathOperator{\Ad}{Ad}
\DeclareMathOperator{\rk}{rk}
\DeclareMathOperator{\Hom}{Hom}
\DeclareMathOperator{\End}{End}
\DeclareMathOperator{\Supp}{Supp}
\DeclareMathOperator{\Sp}{Sp}
\DeclareMathOperator{\Gr}{Gr}
\DeclareMathOperator{\Irr}{Irr}
\DeclareMathOperator{\SIS}{SS}
\newcommand{\Rami}[1]{{{#1}}}
\newcommand{\Dima}[1]{{{#1}}}
\newcommand{\DimaB}[1]{{{#1}}}
\newcommand{\DimaC}[1]{{{#1}}}
\newcommand{\DimaD}[1]{{{#1}}}
\newcommand{\DimaE}[1]{{{#1}}}
\newcommand{\DimaF}[1]{{{#1}}}
\newcommand{\DimaG}[1]{{{#1}}}
\newcommand{\DimaH}[1]{{{#1}}}
\newcommand{\Andrey}[1]{{{#1}}}
\begin{document}

\author{Avraham Aizenbud}
\address{
Faculty of Mathematics and Computer Science,
Weizmann Institute of Science,
 234 Herzl Street, Rehovot 7610001, Israel}
\email{aizenr@gmail.com}
\urladdr{\url{http://www.wisdom.weizmann.ac.il/~aizenr}}

\author{Dmitry Gourevitch}
\email{dmitry.gourevitch@weizmann.ac.il}
\urladdr{\url{http://www.wisdom.weizmann.ac.il/~dimagur}}

\author{Andrey Minchenko}

\thanks{The three authors were partially supported by the Minerva foundation with funding from the Federal German Ministry for Education and Research;
A.A. was also partially supported by ISF grant 687/13, and
\Rami{D.G. and A.M. by
\DimaF{ERC StG grant 637912 and ISF grant 756/12. At the beginning of this project D.G. was a team member in the ERC grant 291612.}}}

\email{an.minchenko@gmail.com}
\urladdr{\url{http://www.wisdom.weizmann.ac.il/~andreym/}}


\date{\today}
\title[Holonomicity of relative characters]{Holonomicity of \DimaD{relative} characters and applications to multiplicity bounds \DimaD{for spherical pairs}}

\keywords{Real reductive group, Holonomic D-module, equivariant distribution, Bessel function, spherical pair, moment map.  2010 MS Classification: 17B08,22E45,22E46,22E50,14M27, 46F05, 32C38,53D20.
}

%
%
%
%
%
%

\begin{abstract}
In this paper, we prove that any relative character (a.k.a. spherical character) of any admissible representation of a real reductive \DimaG{group}  with respect to any pair of  spherical subgroups is a holonomic distribution on \DimaG{the group}. This implies that the restriction of the relative character to an open dense subset is given by an analytic function. The proof is based on an argument from algebraic geometry and thus implies also analogous results in the p-adic case.

As an application, we give a short proof of some results from \cite{KO,KS} on boundedness and finiteness of multiplicities of irreducible representations  in the space of functions on a spherical space.

In order to deduce this application we prove relative and quantitative  analogs of the Bernstein-Kashiwara theorem, which states that the space of solutions of a holonomic system of differential equations in the space of distributions is finite-dimensional. We also deduce that, for every algebraic group $\DimaG{\bfG}$ \DimaG{defined over $\R$}, the space of $\DimaG{\bfG(\R)}$-equivariant distributions on \DimaG{the manifold of real points of} any algebraic $\DimaG{\bfG}$-manifold $\DimaG{\bfX}$ is finite-dimensional if $\DimaG{\bfG}$ has finitely many orbits on $\DimaG{\bfX}$.
\end{abstract}

\dedicatory{\DimaF{to Joseph Bernstein - the teacher, the mathematician, and the person.}}

\maketitle

\tableofcontents
\section{Introduction}
\subsection{The relative character}\( \)
\DimaG{Let $\bfG$ be a  reductive group\footnote{\DimaH{By a reductive group we mean a connected algebraic reductive group}} defined over $\R$}.
In this paper, we prove that a \DimaD{relative character (a.k.a. spherical character)} of a smooth admissible \Fre  representation of moderate growth of \DimaG{$\bfG(\R)$} is holonomic.
The \DimaD{relative} character is a basic notion of relative representation theory that generalizes the notion of a character of a representation.  Let us now recall the notions of spherical pair, \DimaD{relative} character and holonomic distribution. \Rami{For the notion of smooth admissible \Fre  representation of moderate growth \DimaF{and its relation to Harish-Chandra modules} we refer the reader to \cite{CasGlob} or \cite[Chapter 11]{Wal}}.
\begin{defn}
\DimaG{Let $\bfH\subset \bfG$ be a (closed) algebraic subgroup defined over $\R$}. Let ${P}$ denote a minimal parabolic subgroup of \DimaG{$\bfG$ defined over $\R$} and $\bfB$ denote a Borel subgroup of $\bfG$  \DimaG{(possibly not defined over $\R$}).
The subgroup $\bfH$ is called  \emph{ spherical} if it has finitely many orbits on $\bfG/\bfB$. \DimaG{We will call $\bfH$ \emph{strongly real spherical} if it has finitely many orbits on $\bfG/\bfP$}.
\end{defn}

It is known that a pair \DimaG{$(\bfG,\bfH)$ is spherical if and only if $\bfH$ has an open orbit on $\bfG/\bfB$.}


\begin{defn}\label{def:SphChar}
Let $\bfH_1,\bfH_2\subset \bfG$ be spherical subgroups \DimaC{and let $\fh_i$ be the Lie algebras of $\bfH_i$}. Let $\chi_i$ be characters of $\DimaC{\fh_i}$. Let $\pi$ be a smooth admissible \Fre representation of moderate growth of $\bfG\DimaG{(\R)}$\DimaF{, $\pi^*$ be the continuous dual of $\pi$, and $\hat{\pi}\subset \pi^*$ be the smooth contragredient representation to $\pi$ (i.e. the only smooth admissible \Fre representation of moderate growth with the same space underlying Harish-Chandra module as $\pi^*$). }
Let
$\phi_1 \in (\pi^*)^{\DimaC{\fh}_1,\chi_1}$ and $\phi_2 \in (\hat \pi^*)^{\DimaC{\fh}_2,\chi_2}$ be equivariant functionals. Fix a Haar measure on $\bfG\DimaG{(\R})$. It gives rise to an action of the space of Schwartz functions $\Sc(\bfG\DimaG{(\R}))$ on $\pi^*$ and $\hat \pi^*$, and this action maps elements of $\pi^*$ and $\hat \pi^*$ to elements of $\hat \pi$ and $\hat{\hat \pi}=\pi$ \Rami{respectively. For the definition of the space of Schwartz functions $\Sc(\bfG\DimaG{(\R}))$ see, e.g., \cite{CasGlob,Wal,AGSc}}.

The \DimaD{relative} character $\xi_{\phi_1,\phi_2}$ of $\pi$, with respect to $\phi_1$ and $\phi_2$,  is the tempered distribution on $\bfG\DimaG{(\R})$ (i.e. a continuous functional on $\Sc(\bfG\DimaG{(\R}))$) defined by $\langle \xi_{\phi_1,\phi_2} , f \rangle= \langle \phi_1, \pi(f) \cdot \phi_2 \rangle$.
\end{defn}

\begin{defn}\label{def:SISDist}
\DimaG{Let $\bfX$ be an algebraic manifold defined over $\R$. Let $\xi\in \Sc^*(\bfX(\R))$ be a tempered distribution.
The singular support\footnote{a.k.a. characteristic variety}  $\SIS(\xi)$ of $\xi$}
is the \DimaG{ zero locus} in $T^*\bfX$ of all the symbols of (algebraic) differential operators that annihilate $\xi$. The distribution $\xi$ is called holonomic if $\dim \SIS(\xi) = \dim \bfX$.
\end{defn}

In this paper we prove the following theorem.
\begin{introthm}[See \S \ref{subsec:PfMain}]\label{thm:main}
In the situation of \Cref{def:SphChar}, the \DimaD{relative} character  $\xi_{\phi_1,\phi_2}$ is holonomic.
\end{introthm}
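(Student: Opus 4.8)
The plan is to bound the singular support $\SIS(\xi)$ of $\xi:=\xi_{\phi_1,\phi_2}$ from above by an explicit algebraic subvariety of $T^*\bfG$ that sphericity forces to have dimension at most $\dim\bfG$, and to get the matching lower bound from Bernstein's inequality. Concretely: a nonzero $\xi$ generates a cyclic (hence coherent) module over the ring $\mathcal{D}(\bfG)$ of algebraic differential operators, so $\dim\SIS(\xi)\geq\dim\bfG$ automatically, and it is therefore enough to prove $\dim\SIS(\xi)\leq\dim\bfG$. (The zero distribution is holonomic by convention.)

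First I would write down the differential equations that $\xi$ satisfies. Since a smooth admissible \Fre representation of moderate growth has finite length, $Z(\fg)$ acts on $\pi$, and hence on $\pi^*$ and $\hat\pi^*$, through a finite-dimensional quotient; pushing central elements through $\langle\xi,f\rangle=\langle\phi_1,\pi(f)\phi_2\rangle$ shows that $\xi$ is $Z(\fg)$-finite. Differentiating the same identity, the equivariance of $\phi_1$ and of $\phi_2$ turns into: $\xi$ is an eigendistribution with eigencharacter $\chi_1$ for the left action of $\fh_1$ and with eigencharacter $\chi_2$ for the right action of $\fh_2$ (the precise sides and normalizations are irrelevant below). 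Passing to complexifications and trivializing $T^*\bfG\cong\bfG\times\fg$ by a nondegenerate invariant form on $\fg$, I would then read off principal symbols. The $Z(\fg)$-finiteness means that for every positive-degree $z\in Z(\fg)$ a power of $\prod_i(z-\lambda_i(z))$ annihilates $\xi$ (the $\lambda_1,\dots,\lambda_k$ being the finitely many generalized infinitesimal characters occurring), and its principal symbol is a power of $\sigma(z)$; since the symbols of the positive-degree elements of $Z(\fg)$ cut out the nilpotent cone $\NC$ (Kostant), this forces the fibre vector into $\NC$. The right $(\fh_2,\chi_2)$-equivariance forces it into $\fh_2^{\perp}$, and the left $(\fh_1,\chi_1)$-equivariance forces its $\Ad(g)$-translate into $\fh_1^{\perp}$. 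This yields
\[
\SIS(\xi)\ \subseteq\ \mathcal{Z}:=\bigl\{(g,Y)\in\bfG\times\fg\ :\ Y\in\NC,\ \ Y\in\fh_2^{\perp},\ \ \Ad(g)Y\in\fh_1^{\perp}\bigr\}.
\]

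Next I would prove the purely geometric estimate $\dim\mathcal{Z}\leq\dim\bfG$ — this is the part of the proof that is algebraic geometry, and hence the part that transfers verbatim to the $p$-adic setting. I would stratify $\mathcal{Z}$ by the finitely many nilpotent $\bfG$-orbits $\cO\subseteq\fg$: the stratum $\mathcal{Z}_{\cO}$ maps onto $(\fh_1^{\perp}\cap\cO)\times(\fh_2^{\perp}\cap\cO)$ by $(g,Y)\mapsto(\Ad(g)Y,Y)$ with each fibre a coset of a point stabilizer, so $\dim\mathcal{Z}_{\cO}=\dim(\fh_1^{\perp}\cap\cO)+\dim(\fh_2^{\perp}\cap\cO)+\dim\bfG-\dim\cO$. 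Thus it suffices to prove the key lemma: \emph{if $\bfH\subseteq\bfG$ is spherical, then $\dim(\fh^{\perp}\cap\cO)\leq\tfrac12\dim\cO$ for every nilpotent orbit $\cO$.} For this I would use the Springer map $\mu\colon T^*(\bfG/\bfB)\to\NC$: one checks that $\mu^{-1}(\fh^{\perp})=\{(x,\eta):\eta\perp T_x(\bfH\!\cdot\! x)\}$ is the union of the conormal bundles to the $\bfH$-orbits on $\bfG/\bfB$, which by sphericity is a \emph{finite} union of Lagrangian subvarieties, hence of dimension $\dim\bfG/\bfB$; on the other hand every Springer fibre over $\cO$ has dimension $\dim\bfG/\bfB-\tfrac12\dim\cO$, so $\dim\mu^{-1}(\fh^{\perp}\cap\cO)=\dim(\fh^{\perp}\cap\cO)+\dim\bfG/\bfB-\tfrac12\dim\cO$, and since $\mu^{-1}(\fh^{\perp}\cap\cO)\subseteq\mu^{-1}(\fh^{\perp})$ this gives $\dim(\fh^{\perp}\cap\cO)\leq\tfrac12\dim\cO$. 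Substituting back, $\dim\mathcal{Z}_{\cO}\leq\dim\bfG$ for every $\cO$, hence $\dim\mathcal{Z}\leq\dim\bfG$, and the theorem follows.

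The step I expect to be the main obstacle is the key lemma on $\dim(\fh^{\perp}\cap\cO)$: this is the only place sphericity (of both $\bfH_1$ and $\bfH_2$) is used, and it is where genuine geometric input is needed — the Springer-fibre dimension count, equivalently the Lagrangian property of the conormal variety of $\bfH$-orbits on $\bfG/\bfB$. The rest is routine, but two points deserve care: the $Z(\fg)$-finiteness of $\xi$ must be extracted from \emph{finite length} of $\pi$, not merely from the existence of an infinitesimal character, and the symbol computation must be carried out in one fixed trivialization of $T^*\bfG$ so that the two one-sided equivariances genuinely contribute the two independent constraints $Y\in\fh_2^{\perp}$ and $\Ad(g)Y\in\fh_1^{\perp}$.
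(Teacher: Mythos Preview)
Your proof is correct and follows essentially the same approach as the paper: bound $\SIS(\xi)$ inside the set $\bfS$ using $(\fh_1,\fh_2)$-equivariance and $\fz(\cU(\fg))$-finiteness (the paper's Proposition~\ref{prop:S}), then prove the geometric bound $\dim\bfS\le\dim\bfG$ via the Springer resolution, the identification of $\mu^{-1}(\fh^{\perp})$ with the union of conormal bundles to $\bfH$-orbits on $\BB$, and Steinberg's dimension formula (the paper's Theorem~\ref{thm:geo}).

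The only difference is in the packaging of the dimension argument. The paper compares the two maps $L_1\times L_2\twoheadrightarrow \cN_{\fh_1}\times\cN_{\fh_2}\leftarrow\bfS$ via a general fibre-comparison lemma (Lemma~\ref{lem:tech}) applied at a single well-chosen point. You instead stratify $\bfS$ by nilpotent orbits $\cO$ and isolate the single-subgroup inequality $\dim(\fh^{\perp}\cap\cO)\le\tfrac12\dim\cO$ as an intermediate lemma; this is a clean standalone statement that makes the role of sphericity especially transparent, and your derivation of it is exactly the paper's computation of $\dim L_i$ combined with Steinberg's theorem, just rearranged. Both routes use the identical ingredients and yield the same bound.
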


We prove \Cref{thm:main} using the following well-known statement.

\begin{prop}[See \S \ref{subsec:PfMain}]\label{prop:S}
Let $\g,\h_i$ be the Lie algebras of $\bfG$ and $\bfH_i$, $i=1,2$.
\DimaC{Identify $T^*\bfG$ with $\bfG\times\g^*$ and let}
 $$\bfS:=\{(g,\alpha) \in \bfG \times \g^*\, \mid\, \alpha \text{ is nilpotent, } \langle \alpha, \h_1 \rangle =0, \ \langle \alpha, Ad^*(g)(\h_2) \rangle =0\}.$$

Then $\SIS(\xi_{\phi_1,\phi_2}) \subset \bfS.$
\end{prop}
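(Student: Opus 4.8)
The plan is to bound the singular support of $\xi_{\phi_1,\phi_2}$ by exhibiting enough differential operators annihilating it and computing their symbols. The distribution $\xi_{\phi_1,\phi_2}$ satisfies three families of equivariance-type equations coming from its definition $\langle \xi_{\phi_1,\phi_2},f\rangle = \langle \phi_1,\pi(f)\phi_2\rangle$: left $(\fh_1,\chi_1)$-equivariance, right $(\fh_2,\chi_2)$-equivariance, and $Z(U(\g))$-finiteness (the Casimir and higher central elements act on $\pi$ by scalars, hence annihilate $\xi$ up to scalars). First I would make these precise. For $X\in\fh_1$, differentiating the left translation action gives that $(L_X - \chi_1(X))\xi = 0$, where $L_X$ is the left-invariant (or rather left-action-generated) vector field on $\bfG$; similarly for $X\in\fh_2$ one gets $(R_X - \chi_2(X))\xi = 0$ with $R_X$ the right-translation vector field. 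The symbol of the first-order operator $L_X - \chi_1(X)$ is the function on $T^*\bfG = \bfG\times\g^*$ given by $(g,\alpha)\mapsto \langle\alpha, X\rangle$ (after the standard identification, using that the symbol of the constant $\chi_1(X)$ vanishes). Running over all $X\in\fh_1$ this forces $\langle\alpha,\fh_1\rangle = 0$ on $\SIS(\xi)$. The right-equivariance operators $R_X$ for $X\in\fh_2$ have symbol $(g,\alpha)\mapsto \langle\alpha, \Ad^*(g)X\rangle$ (the right vector field at $g$ corresponds, under the left trivialization, to the $\Ad(g)$-twist), which imposes $\langle\alpha,\Ad^*(g)(\fh_2)\rangle = 0$.

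Next I would handle the nilpotency condition, which is the only genuinely substantive point. It comes from $Z(U(\g))$-finiteness together with the Harish-Chandra isomorphism. Any $z$ in the augmentation-shifted maximal ideal of $Z(U(\g))$ — more precisely, writing $z_\chi := z - \lambda_\pi(z)$ for the infinitesimal character $\lambda_\pi$ of $\pi$ — annihilates $\xi_{\phi_1,\phi_2}$, and this holds whether we let $z$ act as a left- or right-invariant differential operator. Now $Z(U(\g)) \cong S(\g)^{\bfG}$ via the (unsymmetrized or symmetrized) identification, and the associated graded / principal symbol of the bi-invariant operator attached to $z$ is, up to lower-order terms, the $\Ad^*$-invariant polynomial on $\g^*$ corresponding to $z$; the shift by the scalar $\lambda_\pi(z)$ only affects lower-order terms. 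By Kostant's theorem the common zero locus of all positive-degree homogeneous $\Ad^*$-invariant polynomials on $\g^*$ (identified with $\g$ via an invariant form, using reductivity of $\bfG$) is exactly the nilpotent cone $\NC\subset\g^*$. Therefore these symbols cut out $\{(g,\alpha) : \alpha \text{ nilpotent}\}$ in $T^*\bfG$. I should be slightly careful that $\SIS$ is defined by the zero locus of symbols of operators that annihilate $\xi$, and that the ideal generated by symbols of $L_X-\chi_1(X)$, $R_X-\chi_2(X)$ ($X\in\fh_i$) and $z-\lambda_\pi(z)$ ($z\in Z(U(\g))$) need only have zero locus \emph{contained in} $\bfS$ — which is exactly what the three computations above give — so $\SIS(\xi_{\phi_1,\phi_2})\subseteq\bfS$ follows. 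A minor subtlety is that in the $p$-adic or more general setting one must know $\xi$ is actually annihilated (not just ``up to a distribution supported on a smaller set'') by these operators, but that is immediate from the definition since $\pi(f)\phi_2$ is manipulated through honest identities $\pi(Xf) = $ (action of $X$), and likewise $\pi(zf)\phi_2 = \lambda_\pi(z)\pi(f)\phi_2$.

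The main obstacle I anticipate is bookkeeping the two sources of $Z(U(\g))$-equations correctly: $\xi_{\phi_1,\phi_2}$ is $Z(U(\g))$-finite from \emph{both} sides, but a priori with possibly different infinitesimal characters (for $\pi$ and for $\hat\pi$). However, since the nilpotent cone is cut out by the \emph{symbols}, which are insensitive to the scalar shift, either side alone already delivers $\alpha\in\NC$, so this causes no difficulty — one simply uses one side. A second point requiring a line of care is the identification of the symbol of a bi-invariant operator $z\in Z(U(\g))$: under $U(\g)\cong S(\g)$ (PBW) the top-degree part of $z$ is the Harish-Chandra/Chevalley-restriction preimage of an invariant polynomial, and one should cite Kostant (or Chevalley for the restriction statement plus reductivity) for the fact that these invariants cut out exactly $\NC$. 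Granting these standard facts, the proof is the three symbol computations assembled above, and I expect it to be short.
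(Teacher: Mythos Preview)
Your approach is essentially the same as the paper's: exhibit three families of annihilating operators (left $\fh_1$-equivariance, right $\fh_2$-equivariance, and center of $\cU(\fg)$) and compute their symbols. The only point where the paper differs is that it does not assume $\pi$ has an infinitesimal character: since $\pi$ is merely admissible, the paper invokes $\fz(\cU(\fg))$-finiteness (a finite-codimension ideal $I$ annihilates $\pi$), and then for each $z\in\fz(\cU(\fg))$ picks a polynomial $p$ with $p(z)\in I$, so that $p(z)\xi=0$; the principal symbol of $p(z)$ is a power of the symbol of $z$, hence the symbol of every positive-degree $z$ vanishes on $\SIS(\xi)$. Your version with $z-\lambda_\pi(z)$ is the special case where $\pi$ is irreducible; the fix is exactly this one-line replacement.
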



Note that the Bernstein inequality states that the dimension of the singular support of any non-zero distribution is at least the dimension of the underlying manifold. Thus  Theorem \ref{thm:main} follows from the following more precise version, which is the core of this paper.

\begin{introthm}[See \S \ref{sec:geo}]\label{thm:geo}
We have $\dim \bfS = \dim \bfG$.
\end{introthm}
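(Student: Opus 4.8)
The goal is to show $\dim \bfS = \dim \bfG$. I will think of $\bfS$ as a subvariety of $\bfG \times \g^*$ and project to the first factor. For a fixed $g \in \bfG$, the fiber $\bfS_g$ consists of nilpotent $\alpha \in \g^*$ that vanish on $\h_1$ and on $\Ad^*(g)(\h_2)$, i.e. $\alpha \in \NC \cap \h_1^\perp \cap (\Ad^*(g)\h_2)^\perp = \NC \cap (\h_1 + \Ad(g)\h_2)^\perp$, where $\NC \subset \g^*$ is the nilpotent cone (identifying $\g \cong \g^*$ via a nondegenerate invariant form, $\h^\perp$ becomes the orthogonal complement). Since $\dim \bfG = \dim \g$, I must show $\dim \bfS_g + \dim(\text{locus of such }g) \le \dim \g$ generically, and that the contributions over smaller strata do not exceed $\dim\g$ either.

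**The key geometric input.**

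\textbf{Step 1 (open orbit / genericity).} Because $\bfH_1$ and $\bfH_2$ are spherical, $\bfH_1$ has an open orbit on $\bfG/\bfB$ and likewise $\bfH_2$; equivalently $\h_1 + \Ad(g)\ba = \g$ and $\h_2 + \Ad(g')\ba = \g$ for generic $g, g'$, where $\ba$ is a Borel subalgebra. I want to convert this into the statement that for generic $g$, the subspace $\h_1 + \Ad(g)\h_2$ is ``large'' — ideally that $(\h_1 + \Ad(g)\h_2)^\perp$ meets $\NC$ only in a subvariety of controlled dimension. The cleanest route: show that for generic $g$, $\h_1 + \Ad(g)\h_2 \supseteq$ some Borel (or more precisely that $(\h_1+\Ad(g)\h_2)^\perp$ is contained in a nilradical conjugate, hence its intersection with $\NC$ is the whole space, of dimension $\le \dim\NC = \dim\g - \rk\g$). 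Here I expect to use that the product of two open Bruhat-type cells is dense, or a dimension count: $\dim \h_1 + \dim \h_2 \ge \dim\g$ is not automatic, so sphericity must be used more cleverly — via the fact that $\h_i^\perp \cap \NC$ has dimension $\le \dim\g - \dim\bfB$ (this is essentially the statement that the moment map image of $T^*(\bfG/\bfH_i)$ lies in the nilpotent cone and the generic fiber has the expected dimension, a consequence of sphericity, cf. Knop).

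\textbf{Step 2 (stratify and count).} Stratify $\bfG$ by $\bfG_k := \{g : \dim(\h_1 + \Ad(g)\h_2) = \dim\g - k\}$. On $\bfG_k$ the fiber $\bfS_g$ is $\NC \cap V_g$ where $V_g := (\h_1+\Ad(g)\h_2)^\perp$ has dimension $k$. So $\dim(\bfS \cap (\bfG_k \times \g^*)) \le \dim\bfG_k + \max_{g\in\bfG_k}\dim(\NC \cap V_g)$. I need: $\dim\bfG_k + (\text{that}) \le \dim\g$ for every $k$. For $k = 0$ (the open stratum) $\bfS_g = \{0\}$, contributing exactly $\dim\g$. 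The content is bounding the non-generic strata: I expect a Lagrangian/coisotropic argument — since $V_g = \h_1^\perp + (\Ad^* g)\h_2^\perp$ and $\h_i^\perp \cap \NC$ is the moment map image $\mu_i(T^*(\bfG/\bfH_i))$ (coisotropic, of dimension $\dim\g - \dim(\bfG/\bfH_i)$ when $\bfH_i$ is spherical, by Guillemin–Sternberg / Knop), one reduces to showing $\dim\{(g,\alpha) : \alpha \in \h_1^\perp\cap\NC,\ (\Ad^* g^{-1})\alpha \in \h_2^\perp\cap\NC\} = \dim\g$. This last space fibers over $\h_1^\perp\cap\NC$ with fiber over $\alpha$ equal to $\{g : (\Ad^*g^{-1})\alpha \in \h_2^\perp \cap \NC\}$, and the $\bfG$-orbit of $\alpha$ is a single nilpotent orbit $\OO_\alpha$, so that fiber is (a union of translates of) $\mathrm{Stab}_\bfG(\alpha)$-cosets over $\OO_\alpha \cap \h_2^\perp$; a dimension count then gives $\dim(\h_1^\perp\cap\NC) + \dim\mathrm{Stab}(\alpha) + \dim(\OO_\alpha\cap\h_2^\perp) - \dim\OO_\alpha$, and sphericity of $\bfH_2$ bounds $\dim(\OO\cap\h_2^\perp) \le \dim\OO - (\text{something})$, while $\dim(\h_1^\perp\cap\NC)\le\dim\g-\dim(\bfG/\bfH_1)$; summing, these match $\dim\g$.

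**Reduction and the main obstacle.**

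Rather than chase the conjugacy-class bookkeeping, the slicker packaging I would actually pursue: note $\bfS = (\mu_1 \times \mu_2)^{-1}(\Delta)$-type fiber product, or directly, $\bfS$ is the fiber product over $\g^*$ (twisted by the $\bfG$-action) of $M_1 := \bfG \times_{\bfH_1}(\h_1^\perp\cap\NC)$ and $M_2$, both of which are (by sphericity) isotropic/Lagrangian pieces inside $T^*\bfG$ for the two commuting Hamiltonian actions. Then $\bfS \subset T^*\bfG$ is the intersection of two coisotropic subvarieties associated to the $\bfH_1$- and $\bfH_2$-actions, and a general symplectic-geometry lemma (intersection of a coisotropic of codimension $c$ with an isotropic, inside a symplectic manifold of dimension $2n$) forces $\dim\bfS \le n = \dim\bfG$; combined with the reverse inequality (Bernstein, applied to any nonzero $\xi_{\phi_1,\phi_2}$, or directly since $\bfS$ contains $\bfG\times\{0\}$... wait, that has dimension $\dim\bfG$, giving $\ge$), we get equality. \textbf{The main obstacle} I anticipate is Step 1/2's input: making rigorous that sphericity of $\bfH_i$ forces $\h_i^\perp \cap \NC$ to have dimension exactly $\dim\g - \dim\bfG/\bfH_i$ (equivalently, that the moment map $T^*(\bfG/\bfH_i) \to \g^*$ is generically finite onto a coisotropic image, and lands in $\NC$) — this is the Gelfand–Kazhdan / Knop coisotropy theorem for spherical spaces, and invoking it with the right level of generality (algebraic, over $\R$, not necessarily connected $\bfH_i$) is the delicate point. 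Once that dimension estimate for each factor is in hand, the final fiber-product dimension count over the (twisted) diagonal in $\g^*$, using that $\bfG$ acts on $\NC$ with finitely many orbits, closes the argument.
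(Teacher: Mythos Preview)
Your overall strategy --- fiber $\bfS$ over $\mathfrak{h}_1^\perp\cap\mathcal{N}$, then over nilpotent orbits, and close the dimension count using sphericity --- is a viable route, but it is genuinely different from the paper's argument, and the step you flag as the ``main obstacle'' is exactly where the two diverge.

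The paper does \emph{not} work with $\mathfrak{h}_i^\perp\cap\mathcal{N}$ directly, nor does it invoke Knop's moment-map theory. Instead it lifts everything through the Springer resolution $\mu:T^*\mathcal{B}\to\mathcal{N}$. The preimage $L_i:=\{(B,X)\in T^*\mathcal{B}:X\in\mathfrak{h}_i^\perp\}$ is exactly the union of conormal bundles to the (finitely many) $H_i$-orbits on $\mathcal{B}$, so $\dim L_i=\dim\mathcal{B}$ is immediate from sphericity. The count is then closed by comparing the fibers of $\mu':L_1\times L_2\to\mathcal{N}_{\mathfrak{h}_1}\times\mathcal{N}_{\mathfrak{h}_2}$ (twice a Springer fiber) with the fibers of $\alpha':\bfS\to\mathcal{N}_{\mathfrak{h}_1}\times\mathcal{N}_{\mathfrak{h}_2}$ (a centralizer), and the identity $\dim G_\eta-2\dim\mu^{-1}(\eta)=\rk G$ is Steinberg's theorem. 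This gives $\dim\bfS\le 2\dim\mathcal{B}+\rk G=\dim G$ in one line.

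Your route would work if you had the orbit-by-orbit bound $\dim(\mathcal{O}\cap\mathfrak{h}_i^\perp)\le\tfrac12\dim\mathcal{O}$ for every nilpotent orbit $\mathcal{O}$ (your corrected count reads $\dim\bfS\le\dim(\mathcal{O}_\alpha\cap\mathfrak{h}_1^\perp)+\dim G_\alpha+\dim(\mathcal{O}_\alpha\cap\mathfrak{h}_2^\perp)$; note there is no $-\dim\mathcal{O}_\alpha$ term --- the fiber of $g\mapsto\Ad^*(g^{-1})\alpha$ over $\mathcal{O}_\alpha\cap\mathfrak{h}_2^\perp$ is already a $G_\alpha$-coset, so the fiber dimension is $\dim G_\alpha+\dim(\mathcal{O}_\alpha\cap\mathfrak{h}_2^\perp)$, not that minus $\dim\mathcal{O}_\alpha$). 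That isotropy bound is true, but the cleanest proof I know passes through exactly the same ingredients: $\mu^{-1}(\mathcal{O}\cap\mathfrak{h}_i^\perp)\subset L_i$ has dimension $\le\dim\mathcal{B}$, and the generic Springer fiber over $\mathcal{O}$ has dimension $\dim\mathcal{B}-\tfrac12\dim\mathcal{O}$ by Steinberg. So your detour ends up re-proving the paper's lemma in disguise.

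Two further cautions. First, the bound you write, $\dim(\mathfrak{h}_1^\perp\cap\mathcal{N})\le\dim\mathfrak{g}-\dim(G/H_1)=\dim H_1$, is not the relevant one and is not what ``generic finiteness of the moment map'' gives; what you actually need is $\le\dim\mathcal{B}$ (and orbitwise $\le\tfrac12\dim\mathcal{O}$). Second, the ``general symplectic-geometry lemma'' you invoke at the end --- that intersecting two coisotropics of the right codimension forces dimension $\le n$ --- is false in general (two transverse coisotropics of codimension $c$ intersect in something of dimension $2n-2c$, which can exceed $n$), so that shortcut does not work without the specific input above.
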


\DimaC{
Let $\bfU:=\left \{ g \in \bfG \, \mid \, \bfS\cap T_g^*\bfG=\{(g,0)\}\right\}.$ Note that $\bfU$ is Zariski open since $\bfS$ is conic \DimaD{and closed}. It is easy to see that \Cref{thm:geo}  implies the following corollary.
\begin{introcor}\label{cor:geo}
 The set $\bfU$ is a Zariski open dense subset of $\bfG$.
\end{introcor}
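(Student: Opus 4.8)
The plan is to deduce Corollary~\ref{cor:geo} from Theorem~\ref{thm:geo} by a dimension count, exploiting the conic structure of $\bfS$. First I would note that $\bfS\subset\bfG\times\g^*$ is a Zariski closed conic subset (conic in the $\g^*$-fiber direction), so its image under the projection $p\colon\bfS\to\bfG$ is a constructible conic‑fiber family; in particular, for each $g\in\bfG$ the fiber $\bfS\cap T^*_g\bfG$ is a closed cone in $\g^*$, hence either equals $\{(g,0)\}$ or has dimension $\ge 1$. Thus the complement $\bfG\setminus\bfU=\{g\mid \dim(\bfS\cap T^*_g\bfG)\ge 1\}$, which is exactly the locus where the fiber is ``large.'' Since $\bfS$ is closed and the projection $p$ is proper on the zero section but we are dealing with cones, the standard upper-semicontinuity of fiber dimension for the (closed, conic) morphism $p$ shows that $\bfG\setminus\bfU$ is Zariski closed; equivalently $\bfU$ is Zariski open, as already observed in the text.

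The substance is to show $\bfU$ is nonempty (density then being automatic, as a nonempty Zariski open subset of the irreducible variety $\bfG$ is dense). Suppose for contradiction that $\bfU=\varnothing$, i.e.\ $\dim(\bfS\cap T^*_g\bfG)\ge 1$ for every $g\in\bfG$. Then every nonempty fiber of $p\colon\bfS\to\bfG$ has dimension $\ge 1$, and since $p$ is surjective (every $g$ gives the point $(g,0)\in\bfS$, but we need fibers of dimension $\ge 1$, which holds by assumption), the fiber dimension theorem gives
\[
\dim\bfS\ \ge\ \dim\bfG + \min_{g\in\bfG}\dim\big(\bfS\cap T^*_g\bfG\big)\ \ge\ \dim\bfG + 1,
\]
contradicting Theorem~\ref{thm:geo}, which asserts $\dim\bfS=\dim\bfG$. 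Hence $\bfU\neq\varnothing$, and being a nonempty Zariski open subset of the irreducible variety $\bfG$, it is dense.

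I expect the only mild subtlety to be the bookkeeping in the fiber-dimension inequality: one must restrict attention to the irreducible components of $\bfS$ that dominate $\bfG$ (the component containing the zero section $\bfG\times\{0\}$ does dominate $\bfG$ with zero-dimensional generic fiber, so it alone forces a component of dimension exactly $\dim\bfG$; any other component dominating $\bfG$ with $\ge1$-dimensional generic fiber would push $\dim\bfS$ above $\dim\bfG$). Concretely: if $\bfU=\varnothing$ then over a dense open $V\subset\bfG$ the fibers of $p$ restricted to $\bfS\setminus(\bfG\times\{0\})$ have dimension $\ge1$ (by generic flatness / upper-semicontinuity applied to a component dominating $\bfG$), forcing $\dim\bfS\ge\dim\bfG+1$. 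This is the step to carry out carefully, but it is routine once the conic and closedness properties of $\bfS$ are in hand; no hard input beyond Theorem~\ref{thm:geo} is needed.
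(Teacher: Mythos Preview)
Your proposal is correct and follows exactly the approach the paper intends: the paper gives no detailed proof, merely noting that $\bfU$ is open because $\bfS$ is conic and closed, and that density follows ``easily'' from Theorem~\ref{thm:geo}. Your argument---that a nonempty open subset of the irreducible variety $\bfG$ is dense, and that $\bfU=\varnothing$ would force every fiber of $p\colon\bfS\to\bfG$ to have dimension $\ge 1$, hence $\dim\bfS\ge\dim\bfG+1$ by the fiber dimension theorem applied to an irreducible component dominating $\bfG$---is precisely the routine verification the paper omits, and your final paragraph correctly identifies and handles the only bookkeeping issue.
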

}
This corollary is useful in view of the next proposition, which follows from \Cref{prop:S,cor:SmoothDist} below.
\begin{prop}\label{prop:ResSmooth}
The restriction $\xi_{\phi_1,\phi_2}|_{\DimaG{\bfU(\R)}}$ is an analytic function.
\end{prop}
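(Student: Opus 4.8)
The plan is to deduce the statement directly from the bound on the singular support provided by \Cref{prop:S} together with the regularity criterion \Cref{cor:SmoothDist}. The point is that over every point of $\bfU$ the variety $\bfS$ meets the cotangent fibre only in the zero covector, and a tempered distribution whose singular support over a point consists of the zero covector alone coincides, near that point, with a real-analytic function.

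In detail, fix $g\in\bfU(\R)$. By \Cref{prop:S} we have $\SIS(\xi_{\phi_1,\phi_2})\subset\bfS$, and hence
$$\SIS(\xi_{\phi_1,\phi_2})\cap T_g^*\bfG\ \subset\ \bfS\cap T_g^*\bfG\ =\ \{(g,0)\},$$
where the last equality is the very definition of the set $\bfU$. Thus over $\bfU(\R)$ the singular support of $\xi_{\phi_1,\phi_2}$ is contained in the zero section of $T^*\bfG$. Applying \Cref{cor:SmoothDist} (which only requires control of the fibre of $\SIS$ over the point in question) shows that $\xi_{\phi_1,\phi_2}$ agrees with a real-analytic function on a neighbourhood of $g$; since $g\in\bfU(\R)$ was arbitrary, $\xi_{\phi_1,\phi_2}|_{\bfU(\R)}$ is analytic.

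The substance of the argument is entirely concentrated in \Cref{cor:SmoothDist}, that is, in the implication ``singular support contained in the zero section $\Rightarrow$ real-analytic function''. On the level of $\mathcal D$-modules this amounts to the facts that a coherent $\mathcal D_{\bfG}$-module with characteristic variety equal to the zero section is an integrable algebraic connection, hence $\OO$-coherent, and that the de Rham complex of such a connection is analytic and elliptic, so that its distributional solutions are automatically real-analytic. This microlocal input is the only nontrivial point; granting it, the deduction above---which merely unwinds the definition of $\bfU$ and the behaviour of $\SIS$ on a single cotangent fibre---is routine. It is precisely this regularity step that I would expect to be the main obstacle were one to prove \Cref{prop:ResSmooth} without appealing to \Cref{cor:SmoothDist}.
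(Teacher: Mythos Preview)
Your argument is correct and is exactly the deduction the paper indicates: the proposition is stated to follow from \Cref{prop:S} and \Cref{cor:SmoothDist}, and you have spelled out that deduction. One small stylistic point: \Cref{cor:SmoothDist} as stated is a global statement (``the distribution generates a smooth $D$-module'') rather than a fibrewise one, so it is cleaner to say directly that $\SIS(\xi_{\phi_1,\phi_2})|_{\bfU}$ lies in the zero section, hence the $D$-module generated by $\xi_{\phi_1,\phi_2}|_{\bfU(\R)}$ is smooth, and then invoke \Cref{cor:SmoothDist} once on $\bfU(\R)$---your parenthetical ``only requires control of the fibre over the point in question'' slightly overstates what the corollary says as written, though of course the local version follows by restriction.
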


\DimaF{
\begin{remark}
In general, $\bfS$ has irreducible components that can not lie in
$\SIS(\xi_{\phi_1,\phi_2})$ for any $\phi_1,\phi_2$. Indeed, $\SIS(\xi_{\phi_1,\phi_2})$ is coisotropic by \cite{Gab,KKS,Mal}, and thus, by Theorem \ref{thm:main}, Lagrangian. On the other hand, one can show that when \DimaG{$\bfG=\mathrm{GL}_{4,\R}$, and $\bfH_1=\bfH_2=\mathrm{GL}_{2,\R}\times \mathrm{GL}_{2,\R}$} embedded as block matrices inside $\bfG$, the variety $\bfS$ has non-isotropic (and thus non-Lagrangian) components.
\end{remark}
}

\subsection{Bounds on the dimension of the space of solutions}

Next we apply our results to representation theory. For this, we use the following theorem.

\begin{thm}[Bernstein-Kashiwara]\label{thm:DimSol}
Let \DimaG{$\bfX$ be \Rami{an algebraic manifold} defined over $\R$}. Let $$\{D_i \xi=0\}_{i=1... n}$$ be a system of linear PDE on $\DimaG{\Sc^*(\bfX(\R))}$ with algebraic coefficients. Suppose that the joint zero set of the symbols of $D_i$ in $T^*\bfX$ is $\dim \bfX$-dimensional. Then the space of solutions of this system  is finite-dimensional.
\end{thm}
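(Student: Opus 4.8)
The plan is to recast the statement in the language of algebraic $D$-modules and run an induction on $\dim\bfX$. Let $D_\bfX$ denote the ring of algebraic differential operators on $\bfX$, let $I\subseteq D_\bfX$ be the left ideal generated by $D_1,\dots,D_n$, and set $M:=D_\bfX/I$. Evaluating a solution at the class of $1$ identifies the solution space of the system with $\Hom_{D_\bfX}\bigl(M,\Sc^*(\bfX(\R))\bigr)$, so it suffices to prove that this space is finite-dimensional whenever $M$ is \emph{holonomic}. That $M$ is holonomic follows from the hypothesis: its characteristic variety $\SIS(M)$ is the zero locus of the symbol ideal $\mathrm{gr}\,I\supseteq(\sigma(D_1),\dots,\sigma(D_n))$, hence is contained in the joint zero set $\bigcap_i\{\sigma(D_i)=0\}$ and thus has dimension $\le\dim\bfX$; by the Bernstein inequality every irreducible component of $\SIS(M)$ has dimension $\ge\dim\bfX$ when $M\neq 0$, which forces holonomicity (the case $M=0$ being trivial). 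I would also record two standard reductions: holonomic $D_\bfX$-modules have finite length, and since $\Hom_{D_\bfX}(-,N)$ is left exact, a short exact sequence $0\to M'\to M\to M''\to0$ gives $\dim\Hom(M,N)\le\dim\Hom(M',N)+\dim\Hom(M'',N)$; so, working along a composition series, I may pass freely to subquotients of $M$.

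The induction is on $m:=\dim\bfX$; for $m=0$ the space $\Sc^*(\bfX(\R))$ is finite-dimensional and there is nothing to prove. For the inductive step I would first isolate the following sub-claim: \emph{if $Z\subsetneq\bfX$ is a proper closed subvariety, then the space of distributions on $\bfX(\R)$ supported in $Z(\R)$ and annihilated by $I$ is finite-dimensional.} To prove it, stratify $Z$ into finitely many smooth locally closed pieces of strictly decreasing dimension. Given a solution supported in $Z(\R)$, restrict it to the complement of the lower-dimensional strata: its restriction is a solution supported in the top stratum $Z^\circ(\R)$ (now closed there), and the kernel of this restriction consists of solutions supported in a closed subvariety of smaller dimension, to which the same argument recurses. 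Using the structure of distributions supported on a smooth closed submanifold (in a tubular neighbourhood, a finite sum of normal derivatives of the delta-distribution against distributions on the submanifold), Kashiwara's equivalence together with the fact that the exceptional inverse image $i^!$ of a holonomic module along a smooth closed embedding is a complex with holonomic cohomology identifies the space of solutions supported on $Z^\circ(\R)$ with (a subspace of) the space of distributional solutions on $Z^\circ(\R)$ of a holonomic system of PDEs; since $\dim Z^\circ<m$, this is finite-dimensional by the inductive hypothesis. Summing over the finitely many strata proves the sub-claim, and applying it with $Z=\Supp(M)$ reduces the inductive step to the case $\Supp(M)=\bfX$.

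Assume now $\Supp(M)=\bfX$, so that the zero section of $T^*\bfX$ is among the components of $\SIS(M)$. Let $Z\subsetneq\bfX$ be the proper closed subvariety which is the union of the singular locus of $\bfX$ and the images in $\bfX$ of all components of $\SIS(M)$ other than the zero section. On $\bfU:=\bfX\setminus Z$ the characteristic variety of $M|_\bfU$ equals the zero section, so $M|_\bfU$ is $\OO_\bfU$-coherent, hence a vector bundle with integrable connection. Restriction of tempered distributions to the open Nash submanifold $\bfU(\R)$ yields a left-exact sequence
\[
0\longrightarrow K\longrightarrow \Hom_{D_\bfX}\bigl(M,\Sc^*(\bfX(\R))\bigr)\xrightarrow{\ \mathrm{res}\ }\Hom_{D_\bfU}\bigl(M|_\bfU,\Sc^*(\bfU(\R))\bigr),
\]
where $K$ consists of the solutions whose image is supported in $Z(\R)$, hence is finite-dimensional by the sub-claim. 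For the target of $\mathrm{res}$: a distributional horizontal section $u$ of an integrable connection satisfies $\partial_i u=A_i u$ in local coordinates, hence also a second-order elliptic equation $(\Delta-B)u=0$ with real-analytic coefficients; by elliptic regularity $u$ is real-analytic, so the horizontal sections over $\bfU(\R)$ form a space of dimension at most $r\cdot b_0(\bfU(\R))<\infty$, where $r$ bounds the fibre dimension and $b_0(\bfU(\R))$ is the (finite) number of connected components of the real-algebraic set $\bfU(\R)$. Therefore $\Hom_{D_\bfX}\bigl(M,\Sc^*(\bfX(\R))\bigr)$ is finite-dimensional, which closes the induction.

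The genuinely delicate point is the sub-claim, i.e. the analysis of distributions supported on the bad locus: one must match the natural $D_\bfX$-module structure on the space of distributions supported on $Z(\R)$ with that of a holonomic module on a stratification of $Z$, and it is the singularities of $Z$ that force both the stratification and the downward recursion. It is also exactly here that carrying explicit bounds — as is needed for the quantitative refinement of the theorem — requires real care. By contrast, the passage to $D$-modules, the appeal to the Bernstein inequality and to finite length, and the real-analytic regularity of horizontal sections of an integrable connection are all routine.
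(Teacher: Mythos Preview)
Your argument is correct and is essentially the Kashiwara-style proof the paper alludes to but deliberately does not follow. The paper instead deduces \Cref{thm:DimSol} from the sharper effective bound $\dim\Hom(M,\Sc^*_X)\le\deg(M)$ (\Cref{thm:FinSol}), proved by Bernstein's method: push each affine chart into $\bA^n$ via a closed embedding (reducing, through \Cref{lem:PullDist} and the $(i_*,i^!)$-adjunction, to a holonomic module $N$ on affine space); choose $g\in\Sp_{2n}(\R)$ so that $g\cdot SS_b(N)$ projects finitely to the base (\Cref{cor:PrLag}); use the Weil representation (\Cref{cor:Weil}) to identify $\Hom(N,\Sc^*)\cong\Hom(N^g,\Sc^*)$ with $N^g$ now $\cO$-coherent; and read off the bound from \Cref{lem:Free}. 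There is no induction on $\dim\bfX$ and no stratification of a bad locus --- a single Fourier-theoretic rotation replaces your entire d\'evissage. The payoff is that $\deg(M)$ is an explicit numerical invariant that is then shown to be bounded in algebraic families (\Cref{thm:DegBound}), which is exactly what the uniform multiplicity applications need; as you correctly observe, extracting such a bound from your inductive scheme is far less direct.

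One technical point in your sub-claim deserves tightening. You invoke ``the exceptional inverse image $i^!$ of a holonomic module'' to pass from $\Hom_{D_{U'}}(M|_{U'},\Gamma_{Z^\circ}\Sc^*)$ to a solution space on $Z^\circ$, but in the paper's conventions $i_*$ is \emph{left} adjoint to $i^!$, so the adjunction runs the wrong way for this step (and the derived version introduces Ext contributions, not just a subspace of a single Hom). The clean fix is already in your hands: use the composition-series reduction you recorded at the outset. For a simple subquotient $S$ of $M|_{U'}$, either $\Supp S\not\subset Z^\circ$ and then $\Hom(S,i_*\Sc^*(Z^\circ))=0$ since a nonzero map from a simple module is injective, or $S=i_*T$ with $T$ simple holonomic on $Z^\circ$, and full faithfulness of $i_*$ (Kashiwara) gives $\Hom(i_*T,i_*\Sc^*(Z^\circ))\cong\Hom_{D_{Z^\circ}}(T,\Sc^*(Z^\circ))$, which is finite by the main inductive hypothesis.
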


It seems that this  theorem is not found in the literature in this formulation, however it has two proofs, one due to Kashiwara (see \cite{Kas,KK} for similar statements) and another due to Bernstein (unpublished).

In order to make our applications in representation theory more precise, we need an effective version of  this theorem. We prove such a version (see Theorem \ref{thm:FinSol} below) following Bernstein's approach, as it is more appropriate for effective bounds. We use this effective version to derive a relative version. Namely, we show that if the system depends on a parameter in an algebraic way, then the dimension of the space of solutions is bounded (see \S \ref{subsec:DimSolRel}  below).

This relative version allows us to deduce the following theorem.
\DimaD{
\begin{introthm}[See \S \ref{subsec:DimSolRel}]\label{thm:FinOrb}
Let \DimaG{$\bfG$ be an  algebraic group defined over $\R$ and let $\bfX$ be an  algebraic $\bfG$-manifold} with finitely many orbits. Let $\fg$ be the Lie algebra of $\bfG$. Let $\cE$ be an algebraic $\bfG$-equivariant bundle on  $\bfX$.  Then, for any natural number $n\in \N$, there exists $C_n \in  \mathbb{N}$ such that for every $n$-dimensional representation $\tau$ of $\fg$ we have  $$\dim \Hom_{\fg}(\tau, \Sc^*(\bfX(\R),\cE))\leq C_{n},$$
where $\Hom_{\fg}$ denotes the space of all continuous $\fg$-equivariant maps.

\end{introthm}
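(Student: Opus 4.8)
The plan is to reduce the statement, via a devissage on the orbit stratification of $\bfX$, to a model situation on a single orbit, and there to apply the quantitative relative Bernstein--Kashiwara theorem announced in \S\ref{subsec:DimSolRel}. First I would fix the $\bfG$-stable filtration of $\bfX(\R)$ by closed subsets $\bfX(\R)=Z_0\supset Z_1\supset\cdots\supset Z_k=\emptyset$ with each $Z_i\setminus Z_{i+1}$ a single $\bfG(\R)$-orbit (or a finite union of orbits, coming from one $\bfG$-orbit defined over $\R$); the existence of such a filtration is exactly the finite-orbit hypothesis. Restriction of distributions to the open stratum and extension by the kernel gives, for each $i$, a short exact sequence $0\to \Sc^*(Z_i(\R)\setminus Z_{i+1}(\R),\cE)\to \Sc^*(Z_i(\R),\cE)\to \Sc^*(Z_{i+1}(\R),\cE)\to 0$ of $\fg$-modules (using that $\Sc^*$ is defined for the pair, i.e. a locally closed subset, and that extension by zero and restriction are $\fg$-equivariant continuous maps). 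Applying $\Hom_\fg(\tau,-)$ gives a left-exact sequence, so $\dim\Hom_\fg(\tau,\Sc^*(\bfX(\R),\cE))\le \sum_i \dim\Hom_\fg(\tau,\Sc^*(O_i(\R),\cE_i))$, where $O_i$ is the $i$-th orbit and $\cE_i$ the restricted bundle. Hence it suffices to prove the bound $C_n$ for each single orbit, and then sum the finitely many resulting constants.

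Next I would handle a single orbit $O=\bfG/\bfH$ with an equivariant bundle $\cE$ associated to a representation of $\bfH$. Distributions on an orbit are supported on the zero section together with "transverse derivatives," but since the orbit is a homogeneous space the relevant statement is cleaner: I want to produce an explicit finite system of algebraic linear PDE on $\Sc^*(O(\R),\cE)$ whose joint symbol variety has dimension $\dim O$, and whose solution space contains every $\fg$-equivariant image of an $n$-dimensional $\tau$. Concretely, given $\phi\in\Hom_\fg(\tau,\Sc^*(O(\R),\cE))$ and a basis $v_1,\dots,v_n$ of $\tau$, the distributions $\phi(v_j)$ span an $\fg$-submodule of dimension $\le n$; on the homogeneous space $O$ this forces each $\phi(v_j)$ to satisfy the PDE system coming from the (at most $n$-dimensional) $\fg$-action, which after passing to the universal enveloping algebra cuts out a holonomic system — the key geometric input being that a finite-dimensional $\fg$-module structure on a space of sections on $\bfG/\bfH$ has characteristic variety inside the conormal to the orbit, hence of dimension $\dim O$ (this is the orbit analogue of Proposition~\ref{prop:S}--Theorem~\ref{thm:geo}, applied to the single orbit; alternatively it follows since on the zero-dimensional "base" $\bfG/\bfG$ the analogue of $\bfS$ is just the nilpotent cone intersected with two annihilators, forcing $\dim=\dim\bfG$). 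Then the quantitative relative Bernstein--Kashiwara theorem, applied to the family of systems parametrized by the finitely many "shapes" of $n$-dimensional $\fg$-modules (an algebraic parameter space, e.g. a closed subscheme of $\mathrm{Hom}(\fg\otimes\C^n,\C^n)$ cut out by the Jacobi identity), yields a uniform bound $C_n'$ on the dimension of the solution space, independent of $\tau$.

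The main obstacle I anticipate is the relative/parametrized step: one must set up the space of $n$-dimensional $\fg$-representations as a genuine algebraic variety $V_n$, build over $V_n$ a single family of differential systems on $O(\R)$ whose fiber over $[\tau]\in V_n$ is the system described above, and verify that the fiberwise symbol varieties all have dimension $\dim O$ \emph{uniformly} — this uniformity is what the relative version of Theorem~\ref{thm:DimSol} in \S\ref{subsec:DimSolRel} is designed to exploit, but one has to check its hypotheses really do hold here, in particular that the "bad locus" where the symbol variety jumps in dimension is empty rather than merely lower-dimensional. A secondary technical point is the exactness of the stratification sequence at the level of $\Sc^*$ of equivariant bundles on real points of possibly non-closed orbits, which requires knowing that $\Sc^*$ behaves well under the open-closed decomposition; this is standard (Aizenbud--Gourevitch) but should be cited carefully. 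Once these are in place, summing the per-orbit constants $C_n'$ over the finitely many orbits gives the desired $C_n$, completing the proof.
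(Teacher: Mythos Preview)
Your devissage over the orbit stratification is unnecessary, and the way you set it up contains a genuine error. For an open $U\subset Z$ with closed complement $F$, the sequence for tempered distributions reads
\[
0\longrightarrow \Sc^*_F(Z)\longrightarrow \Sc^*(Z)\longrightarrow \Sc^*(U)\longrightarrow 0,
\]
so distributions supported on the closed piece inject and restriction to the open piece is the quotient --- the opposite of what you wrote. Moreover $\Sc^*_F(Z)$ is \emph{not} $\Sc^*(F)$: it contains all transverse jets along $F$. Hence even after fixing the arrows, your induction does not reduce to $\Sc^*(O_i,\cE_i)$ on a single orbit; you would still have to control an infinite tower of normal derivatives, which you never address. (A further problem is that the $Z_i$ are orbit closures and need not be smooth, so $\Sc^*(Z_i)$ is not even defined as written.)

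The paper avoids all of this by working on the whole of $\bfX$ at once. The key observation is that the hypothesis ``finitely many orbits'' already forces holonomicity globally, with no stratification. Concretely, \Cref{lem:action} builds a coherent family $\cM$ of $D_{\bfX}$-modules over the variety $Y$ of $n$-dimensional representations of $\fg$, with $\Hom_{\fg}(\tau,\Sc^*(\bfX(\R),\cE))\cong\Hom_{D_{\bfX}}(\cM_\tau,\Sc^*_{\bfX})$, and shows that the singular support of every $\cM_\tau$ lies in $\{(x,\phi)\in T^*\bfX : \langle\phi,\alpha(x)\rangle=0\text{ for all }\alpha\in\fg\}$. This set is the union of the conormal bundles to the $\bfG$-orbits, and since there are only finitely many orbits it has dimension $\dim\bfX$. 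So each $\cM_\tau$ is holonomic --- no analogue of \Cref{thm:geo} is needed here, only the tautology that a finite union of conormal bundles has the right dimension. Then \Cref{thm:FinSol} gives $\dim\Hom\le\deg(\cM_\tau)$, and \Cref{thm:DegBound} bounds $\deg(\cM_\tau)$ uniformly over $Y$.

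In short, your second and third paragraphs already contain the right ingredients --- the parameter variety of representations, the family of D-modules, the relative Bernstein--Kashiwara bound --- but you apply them orbit by orbit after an incorrect reduction, whereas the paper applies them once on all of $\bfX$.
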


\DimaG{
\begin{remark}
Note that the condition that $\bfG$ has finitely many orbits on $\bfX$ is equivalent to $\bfG(\C)$ having finitely many orbits on $\bfX(\C)$ but {\bf not} equivalent  to (and not implied by) $\bfG(\R)$ having finitely many orbits on $\bfX(\R)$.
\end{remark}
}

\subsection{Applications to representation theory}

Using \S \ref{sec:DimSol}, we give  a short proof of some results \DimaE{from} \cite{KO,KS}. Namely, we prove:
\begin{introthm}[See \S \ref{sec:Rep}]\label{thm:IntroMult} Let \DimaG{${G}$ be a reductive group defined over $\R$, $\bfH\subset \bfG$} be a Zariski closed subgroup, and $\fh$ be the Lie algebra of $\bfH$.
\begin{enumerate}[(i)]
\item \label{it:Fin} If \DimaG{$\bfH$ is a strongly} real spherical subgroup then, {for every \DimaE{irreducible smooth admissible \Fre  representation of moderate growth $\pi\in \Irr(\DimaG{\bfG(\R))}$}, and natural number $n\in \N$ there exists $C_n \in  \mathbb{N}$ such that} for every $n$-dimensional representation $\tau$ of $\fh$ we have  $$\dim \Hom_{\fh}(\pi,\tau) \leq C_n.$$
\item \label{it:Bound} If $H$ is a spherical subgroup and we consider only one-dimensional $\tau$ then the space is universally bounded, {\it i.e.}
there exists $C \in  \mathbb{N}$ such that $\dim (\pi^*)^{\fh,\chi} \leq C$ for any $\pi\in \Irr(\DimaG{\bfG(\R))})$ \Andrey{and any character $\chi$ of $\fh$}.
\end{enumerate}
\end{introthm}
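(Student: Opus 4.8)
The plan is to deduce both parts from the machinery of \S\ref{sec:DimSol} (the relative/quantitative Bernstein–Kashiwara theorem) together with the holonomicity results \Cref{thm:main,thm:geo}. The bridge between multiplicities of a representation $\pi$ in $\Sc^*(\bfG(\R)/\bfH(\R))$-type spaces and solutions of holonomic systems goes through the relative character. First I would observe that, for $\pi\in\Irr(\bfG(\R))$ and a character $\chi$ of $\fh$, the space $(\pi^*)^{\fh,\chi}$ is finite-dimensional (by admissibility, or by \cite{KO}), and that the bilinear pairing $(\phi_1,\phi_2)\mapsto \xi_{\phi_1,\phi_2}\in\Sc^*(\bfG(\R))$ with $\phi_1\in(\pi^*)^{\fh,\chi}$, $\phi_2\in(\hat\pi^*)^{\hat\fh,-\chi}$ (using $\bfH_1=\bfH_2=\bfH$) is non-degenerate in $\phi_1$ when $\pi$ is irreducible: if $\xi_{\phi_1,\phi_2}=0$ for all $\phi_2$, then $\phi_1$ annihilates $\pi(\Sc(\bfG(\R)))\cdot\hat\pi^*$, which by irreducibility and density forces $\phi_1=0$. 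Hence $\dim(\pi^*)^{\fh,\chi}\le \dim\{\xi_{\phi_1,\phi_2}\}$, and it suffices to bound the dimension of the span of these relative characters inside $\Sc^*(\bfG(\R))$.

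Next I would identify a system of algebraic PDE satisfied by every $\xi_{\phi_1,\phi_2}$. By \Cref{prop:S} the singular support of each such distribution lies in the fixed variety $\bfS$, which by \Cref{thm:geo} has dimension $\dim\bfG$. Concretely, the equivariance of $\phi_1,\phi_2$ under $\fh$ with characters $\pm\chi$ translates into first-order differential equations for $\xi_{\phi_1,\phi_2}$ coming from left- and right-invariant vector fields in $\fh$ (twisted by $\chi$), and the $Z(\fg)$-finiteness of $\pi$ (it has an infinitesimal character, or at least the annihilator ideal contains a large piece of $Z(\fg)$) gives further equations; together these cut out a system whose characteristic variety is contained in $\bfS$ and hence is $\dim\bfG$-dimensional. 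The point is that the \emph{symbols} of this system depend only on $\fh$ and on the parameters entering through $Z(\fg)$, not on the finer structure of $\pi$. So I would set up a family of such holonomic systems parametrized by (a) the character $\chi$ of $\fh$ and (b) the relevant data of $\pi$ (its infinitesimal character, and for part \eqref{it:Fin} the representation $\tau$ of dimension $n$), and check that this family is algebraic in those parameters.

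For part \eqref{it:Bound}, where $\tau$ is one-dimensional, the parameter space for $\chi$ is $(\fh/[\fh,\fh])^*$ and the infinitesimal character ranges over $\operatorname{Spec}Z(\fg)$; the relative Bernstein–Kashiwara theorem of \S\ref{subsec:DimSolRel} then gives a \emph{uniform} bound $C$ on the solution dimension across the whole family, yielding $\dim(\pi^*)^{\fh,\chi}\le C$ independently of $\pi$ and $\chi$. For part \eqref{it:Fin} one fixes $\pi$ but lets $\tau$ vary over $n$-dimensional $\fh$-representations; here $\Hom_\fh(\pi,\tau)$ is dual to a space of $\tau^*$-valued equivariant functionals, which again produce relative-character-type distributions (now $\cE$-valued, using \Cref{thm:FinOrb}'s framework on $\bfG/\bfH$ rather than directly on $\bfG$), satisfying a holonomic system whose symbols are independent of $\tau$, so the relative theorem gives the bound $C_n$ depending only on $n$ (and $\pi$). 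I would likely phrase part \eqref{it:Fin} as a direct application of \Cref{thm:FinOrb} with $\bfX=\bfG/\bfH$: since $\bfH$ is strongly real spherical, $\bfH(\R)$ — equivalently, a minimal parabolic — has finitely many orbits, and one relates $\Hom_\fh(\pi,\tau)$ to $\Hom_\fg(\pi\otimes\tau^*,\,\text{functions on }\bfG/\bfH)$ via Frobenius reciprocity, landing exactly in the setting of \Cref{thm:FinOrb} after twisting by the finite-dimensional $\tau$.

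The main obstacle I anticipate is the passage from ``$\SIS(\xi_{\phi_1,\phi_2})\subset\bfS$ with $\dim\bfS=\dim\bfG$'' to ``$\xi_{\phi_1,\phi_2}$ solves an \emph{explicit algebraic} holonomic system whose symbol variety is uniformly $\dim\bfG$-dimensional as the parameters vary,'' together with verifying the algebraicity-in-parameters hypothesis needed to invoke the relative version of Bernstein–Kashiwara. The subtlety is that $\SIS$ is defined as the zero locus of \emph{all} symbols of annihilating operators, so one must exhibit enough operators explicitly — the $\fh$-equivariance equations and enough of $Z(\fg)$ — and argue that their joint symbol variety is already contained in $\bfS$ (not just that the full $\SIS$ is); this is where the structure theory behind \Cref{prop:S} is used, and where the uniformity over $\chi$ and over $\operatorname{Spec}Z(\fg)$ must be extracted. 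A secondary technical point is ensuring the relevant $\fg$- or $\fh$-Hom spaces are genuinely governed by tempered distributions on the real points so that \Cref{thm:DimSol} and its effective refinement apply; this is handled by the standard identification of such Hom spaces with spaces of equivariant distributions, as in the setup of \Cref{def:SphChar}.
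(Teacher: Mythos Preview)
Your approach to part~\eqref{it:Bound} is essentially the paper's, with one genuine gap. You take $\bfH_1=\bfH_2=\bfH$ and need a nonzero $\phi_2\in(\hat\pi^*)^{\fh,-\chi}$ so that $\psi\mapsto\xi_{\psi,\phi_2}$ embeds $(\pi^*)^{\fh,\chi}$ into the solution space; but nothing guarantees $(\hat\pi^*)^{\fh,-\chi}\neq 0$ merely because $(\pi^*)^{\fh,\chi}\neq 0$. (Your non-degeneracy claim implicitly needs $\pi(\Sc(\bfG(\R)))\cdot(\hat\pi^*)^{\fh,-\chi}$ dense in $\pi$, which fails outright if that space is zero.) The paper resolves this via an involution $\theta$ of $\bfG$ from \cite{Adams} with $\hat\pi\cong\pi^\theta$ for every $\pi\in\Irr(\bfG(\R))$: setting $\bfH_2:=\theta(\bfH)$ and $\chi_2:=d\theta(\chi)$ forces $(\hat\pi^*)^{\fh_2,\chi_2}\cong(\pi^*)^{\fh,\chi}\neq 0$, and one fixed nonzero $\phi$ there gives the injection. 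The remainder --- packaging the left/right equivariance and the $\fz(\cU(\fg))$-eigenvalue conditions into a holonomic family parametrized by $\mathrm{Spec}\,\fz(\cU(\fg))$ times the characters, and applying the relative bound of \S\ref{subsec:DimSolRel} --- is exactly as you outline.

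For part~\eqref{it:Fin} the gap is more serious. Frobenius reciprocity on $\bfX=\bfG/\bfH$ lands you in $\Hom_{\fg}(\pi,\,\text{sections on }\bfG/\bfH)$, but \Cref{thm:FinOrb} requires the source representation to be \emph{finite-dimensional}, and $\pi$ is not; so ``landing exactly in the setting of \Cref{thm:FinOrb}'' fails as written. The relative-character route does not rescue this either, since \Cref{prop:S} and \Cref{thm:geo} require $\bfH$ to be spherical, whereas \eqref{it:Fin} assumes only strongly real spherical. The paper instead invokes the Casselman embedding theorem to obtain a surjection $\mathrm{Ind}_{\bfP(\R)}^{\bfG(\R)}(\sigma)\twoheadrightarrow\pi$ with $\sigma$ finite-dimensional, so that $\Hom_{\fh}(\pi,\tau)$ embeds into $\Hom_{\fh}(\mathrm{Ind}_{\bfP}^{\bfG}(\sigma),\tau)\cong\Hom_{\fh}(\tau^*,\Sc^*(\bfG/\bfP,\cE_{\sigma^*}))$. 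Now \Cref{thm:FinOrb} applies with $\bfX=\bfG/\bfP$ and the group $\bfH$ acting --- finitely many orbits is precisely the strongly real spherical hypothesis --- yielding a bound that depends on $n$ and on the bundle (hence on $\sigma$, hence on $\pi$), as required.
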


\begin{introcor}\label{cor:strong}

 Let \DimaG{${G}$ be a reductive group defined over $\R$, $H\subset G$} be a Zariski closed reductive subgroup, and $\fh$ be the Lie algebra of $H$.

\begin{enumerate}[(i)]
\item \label{it:StFin} If the diagonal $\Delta \bfH$ is a \DimaG{strongly} real spherical subgroup in $\bfG\times \bfH$ then for every $\pi\in \Irr(\DimaG{\bfG(\R))})$ and $\tau \in \Irr(\DimaG{\bfH(\R))})$ we have finite multiplicities, i.e. $$\dim \Hom_{\fh}(\pi,\tau)< \infty .$$
\item \label{it:StBound}  If the diagonal $\Delta\bfH$ is a spherical subgroup in $\bfG\times \bfH$ then the multiplicities are universally bounded, i.e.,
there exists $C \in  \mathbb{N}$ such that for every $\pi\in \Irr(\DimaG{\bfG(\R))})$, $\tau \in \Irr(\DimaG{\bfH(\R))})$ we have $$\dim \Hom_{\fh}(\pi,\tau) \leq C .$$
\end{enumerate}
\end{introcor}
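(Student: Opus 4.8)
The plan is to deduce \Cref{cor:strong} from \Cref{thm:IntroMult} by a standard reciprocity trick. Observe that for $\pi \in \Irr(\bfG(\R))$ and $\tau \in \Irr(\bfH(\R))$, the space $\Hom_{\fh}(\pi,\tau)$ is (up to the harmless passage to smooth vectors and contragredients, and using that $\tau$ is finite-dimensional over $\fh$ modulo its center acting by a character — here one restricts attention to the infinitesimal data, so $\tau$ as an $\fh$-module need not be finite-dimensional, but the relevant Hom space can be computed on the level of the $(\fh\times\fh)$-module $\pi\boxtimes\tau^*$) naturally identified with a space of the form $\Hom_{\fh}(\pi\otimes\hat\tau, \C)$, i.e. with $(\pi\otimes\hat\tau)^{*,\fh}$, where $\fh$ is embedded diagonally. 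Equivalently, setting $\pi' := \pi\boxtimes\hat\tau \in \Irr((\bfG\times\bfH)(\R))$ and viewing $\Delta\fh \subset \fg\times\fh$, we have
\[
\Hom_{\fh}(\pi,\tau) \;\cong\; \Hom_{\Delta\fh}(\pi', \C) \;=\; \bigl((\pi')^*\bigr)^{\Delta\fh}.
\]
The first step, then, is to record this isomorphism carefully: since $\tau$ is irreducible smooth admissible \Fre of moderate growth, $\hat\tau$ is again such, $\pi'$ is irreducible smooth admissible \Fre of moderate growth for $\bfG(\R)\times\bfH(\R) = (\bfG\times\bfH)(\R)$, and a continuous $\fh$-map $\pi\to\tau$ corresponds to a $\Delta\fh$-invariant functional on $\pi\,\hat\otimes\,\hat\tau$ by duality for \Fre spaces (this is where one uses that $\tau$ is admissible, so that $\tau \cong (\hat\tau)\hat{}$ and $\Hom(\pi,\tau) \cong (\pi\,\hat\otimes\,\hat\tau)^*$ continuously).

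The second step is to apply \Cref{thm:IntroMult} to the reductive group $\bfG\times\bfH$ and the subgroup $\Delta\bfH \subset \bfG\times\bfH$. For part~\eqref{it:StFin}: the hypothesis is precisely that $\Delta\bfH$ is strongly real spherical in $\bfG\times\bfH$, so \Cref{thm:IntroMult}\eqref{it:Fin}, applied with the trivial character $\chi = 0$ and $n=1$ (so $\tau'$ the trivial $1$-dimensional $\Delta\fh$-representation), gives $\dim\Hom_{\Delta\fh}(\pi',\C) < \infty$ for each $\pi' \in \Irr((\bfG\times\bfH)(\R))$; combined with the isomorphism of the first step this is exactly $\dim\Hom_{\fh}(\pi,\tau) < \infty$. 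For part~\eqref{it:StBound}: the hypothesis is that $\Delta\bfH$ is spherical in $\bfG\times\bfH$, and \Cref{thm:IntroMult}\eqref{it:Bound}, again with $\chi=0$, yields a single constant $C$ with $\dim\bigl((\pi')^*\bigr)^{\Delta\fh,0} \le C$ uniformly over all $\pi' \in \Irr((\bfG\times\bfH)(\R))$, hence $\dim\Hom_{\fh}(\pi,\tau) \le C$ uniformly over all $\pi,\tau$.

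The only genuine point requiring care — and the one I would expect to be the main obstacle — is the first step, namely verifying that the reciprocity isomorphism $\Hom_{\fh}(\pi,\tau)\cong\bigl((\pi\,\hat\otimes\,\hat\tau)^*\bigr)^{\Delta\fh}$ is valid \emph{continuously} and at the level of the correct categories (smooth admissible \Fre of moderate growth), so that $\pi' = \pi\boxtimes\hat\tau$ really lands in $\Irr$ of the product group and \Cref{thm:IntroMult} genuinely applies. This is essentially the observation that $\Hom_{\fh}(\pi,\tau)$, for $\tau$ irreducible, depends only on the underlying Harish-Chandra modules and may be rewritten via the contragredient; irreducibility of $\pi'$ follows since the Harish-Chandra module of an outer tensor product of irreducible Harish-Chandra modules for the two factors is irreducible. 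One should also note that $\bfG\times\bfH$ is reductive and that $(\bfG\times\bfH)(\R) = \bfG(\R)\times\bfH(\R)$, so that $\Irr((\bfG\times\bfH)(\R))$ contains all such $\pi\boxtimes\hat\tau$; and that $\Delta\bfH$ is Zariski closed in $\bfG\times\bfH$, as required by the hypotheses of \Cref{thm:IntroMult}. With these verifications in place the corollary follows immediately, and no new geometric or analytic input beyond \Cref{thm:IntroMult} is needed.
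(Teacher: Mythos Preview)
Your proposal is correct and follows essentially the same route as the paper: embed $\Hom_{\fh}(\pi,\tau)$ into the space of $\Delta\fh$-invariant functionals on $\pi\,\widehat{\otimes}\,\hat\tau \in \Irr(\bfG(\R)\times\bfH(\R))$, then apply \Cref{thm:IntroMult} to the reductive group $\bfG\times\bfH$ with subgroup $\Delta\bfH$. The paper states only an embedding (citing \cite[Corollary A.0.7 and Lemma A.0.8]{AMOT} for the technical verification) rather than the isomorphism you assert, but an embedding is all that is needed for the bounds.
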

This corollary follows from Theorem \ref{thm:IntroMult} since $\Hom_{\fh}(\pi,\tau)$ lies in the space of $\Delta \fh$-invariant functionals on the completed tensor product $\pi \widehat{\otimes} \hat \tau\in \Irr(\DimaG{\bfG(\R))}\times \DimaG{\bfH(\R))})$ (see \cite[Corollary A.0.7 and Lemma A.0.8]{AMOT}). All symmetric pairs satisfying the conditions of the corollary were classified in \cite{KM}.

The inverse implications for Theorem \ref{thm:IntroMult}\DimaG{\eqref{it:Bound}} and Corollary \ref{cor:strong}\DimaG{\eqref{it:StBound}} are proven in \cite{KO}.



The results on multiplicities in \cite{KO,KS} are  stronger than Theorem \ref{thm:IntroMult} since they \DimaG{do not require  $H$ to be algebraic,}  and consider maps from the Harish-Chandra space of $\pi$ to $\tau$.
\DimaG{Also, in \cite{KO,KS} Theorem \ref{thm:IntroMult}\eqref{it:Fin} and Corollary \ref{cor:strong}\eqref{it:StFin} are proven in the wider generality of real spherical subgroups.}

In addition,
\cite[Theorem B]{KO} implies that if  $H\subset G$ is an algebraic spherical subgroup there exists  $C \in  \mathbb{N}$ such that $\dim \Hom_{\fh}(\pi,\tau) \leq C\dim \tau,$ for every $\pi\in \Irr(G\DimaG{(\R)})$ and every finite-dimensional continuous representation $\tau$ of $H\DimaG{(\R)}$. It is easy to modify our proof of Theorem \ref{thm:IntroMult}\eqref{it:Bound} to show the boundedness of multiplicities for any $\pi\in \Irr(G\DimaG{(\R)})$  and any $\tau$ of a fixed dimension, but the proof that the bound depends linearly on this dimension would require more work.

Our methods are different from the methods of \cite{KO}, which in turn differ from the ones of \cite{KS}, and the bounds given in the three works are probably very different.
}
\subsection{The non-Archimedean case}
 Theorem \ref{thm:geo} and Corollary \ref{cor:geo} hold over arbitrary fields of characteristic zero. They are useful also for $p$-adic local fields $F$, since the analogs of \Cref{prop:S,prop:ResSmooth} hold in this case, see \cite[Theorem A and Corollary F]{AGS}.
Namely, we have the following theorem.
\begin{thm}[\cite{AGS}]\label{thm:AGS}
Let $G$ be a reductive group defined over a non-Archimedean field $F$ of characteristic $0$ and let $\xi$ be a \DimaD{relative} character of a smooth admissible  representation with respect to two spherical subgroups $H_1,H_2\subset G$.
Let $S$ and $U$ be the sets defined in \Cref{prop:S,cor:geo}. 
%
Then
\begin{enumerate}[(i)]
\item \label{thm:AGS:S} The wave front set of $\xi$ lies in $S\DimaG{(F)}$.

\item\label{thm:AGS:U} The restriction of $\xi$ to $U\DimaG{(F)}$  is given by a locally constant function.

\end{enumerate}
\end{thm}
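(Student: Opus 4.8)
The plan is to obtain both parts from the results of \cite{AGS} together with \Cref{thm:geo}, noting that \Cref{thm:geo} --- being a statement of algebraic geometry --- holds over $F$ verbatim, and hence so does \Cref{cor:geo}.

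Part \ref{thm:AGS:S} is \cite[Theorem A]{AGS}, and I would invoke it rather than reprove it; it is the precise non-Archimedean counterpart of \Cref{prop:S}. Its mechanism parallels the Archimedean one: the relative character $\xi=\xi_{\phi_1,\phi_2}$ transforms by a character under the left action of $H_1$ and under the right action of $H_2$, and it inherits from the admissibility of $\pi$ the property that its wave front directions at every point are nilpotent; under the identification $T^*\bfG\cong\bfG\times\fg^*$ these constraints cut out exactly $\bfS$. The point at which this ceases to be a one-line deduction is that over a $p$-adic field there is no symbol calculus for differential operators, so the passage from equivariance to a wave front bound must be carried out by $p$-adic Fourier analysis; that analysis is the one genuinely hard ingredient, and it is what \cite{AGS} supplies. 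I would take it as given.

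Granting part \ref{thm:AGS:S}, I would deduce part \ref{thm:AGS:U} as follows. By \Cref{thm:geo} over $F$, the set $\bfU$ is a nonempty Zariski open subset of $\bfG$ (\Cref{cor:geo}), so $\bfU(F)$ is a nonempty open subset of $\bfG(F)$; and by the very definition of $\bfU$, for each $g\in\bfU(F)$ the fiber $\bfS\cap T_g^*\bfG$ equals $\{(g,0)\}$. Restricting the containment of part \ref{thm:AGS:S} to $\bfU(F)$ then shows that the wave front set of $\xi|_{\bfU(F)}$ lies in the zero section of $T^*(\bfU(F))$. It remains to invoke the non-Archimedean analog of ``a distribution whose singular support is the zero section is smooth'': by \cite[Corollary F]{AGS}, a distribution on a $p$-adic manifold with zero-section wave front set is locally constant. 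Applied to $\xi|_{\bfU(F)}$, this yields part \ref{thm:AGS:U}.

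The sole obstacle is thus the wave front estimate behind part \ref{thm:AGS:S}: lacking a symbol calculus, one must replace it by the delicate $p$-adic harmonic analysis of \cite{AGS}. The remainder --- in particular the reduction of part \ref{thm:AGS:U} to part \ref{thm:AGS:S}, which uses only \Cref{thm:geo} over $F$ and the standard dictionary between (trivial) wave front sets and local constancy --- is formal.
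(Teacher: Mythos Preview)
Your proposal is correct and aligns with the paper's treatment: the paper does not prove this theorem but cites it from \cite{AGS}, indicating just before the statement that it amounts to the combination of \cite[Theorem A and Corollary F]{AGS} (the non-Archimedean analogs of \Cref{prop:S} and \Cref{prop:ResSmooth}) with the algebro-geometric input of \Cref{thm:geo} and \Cref{cor:geo}, exactly as you describe.

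One minor remark: for the bare statement of part~\eqref{thm:AGS:U} you do not actually need \Cref{thm:geo}. The set $\bfU$ is Zariski open simply because $\bfS$ is closed and conic (as noted right before \Cref{cor:geo}), and the definition of $\bfU$ already guarantees that $\bfS$ meets each cotangent fiber over $\bfU$ only in zero; combined with part~\eqref{thm:AGS:S} and \cite[Corollary F]{AGS} this gives local constancy on $\bfU(F)$. The role of \Cref{thm:geo}/\Cref{cor:geo} is to ensure that $\bfU$ is \emph{dense}, which is what makes the conclusion non-vacuous and useful, but it is not logically required for the statement as written.
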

%

\subsection{Related results}
In the group case, i.e. the case when $G=H\times H$ and $H_1=H_2=\Delta H \subset H \times H$, Theorem \ref{thm:main} essentially becomes the well-known fact that characters of admissible representations are holonomic distributions.

As we mentioned above, Theorem \ref{thm:IntroMult} was proven earlier in \cite{KO,KS} using different methods. An analog of Theorem \ref{thm:IntroMult}\eqref{it:Fin} over non-Archimedean fields is proven in \cite{Del} and \cite[Theorem 5.1.5]{SV} for many spherical pairs, including arbitrary symmetric pairs.

The group case of \Cref{cor:geo}, \Cref{prop:ResSmooth}, and \Cref{thm:AGS}\eqref{thm:AGS:U} is (the easy part of) the Harish-Chandra regularity theorem (see \cite{HCBul,HCReg}).  \Dima{Another known special case of these results is the regularity of Bessel functions, see \cite{LM,AGK,AG}.}

\subsection{Future applications}
Our proof of Theorem \ref{thm:IntroMult}\eqref{it:Bound} does not use the Casselman embedding theorem {(Theorem~\ref{thm:CasSubRep})}. 
This gives us hope that it can be extended to the non-Archimedean case. The main difficulty is the fact that our proof heavily relies on the theory of modules over the ring of differential operators, which does not act  on distributions in the non-Archimedean case. However, in view of  \Cref{thm:AGS} we believe that this difficulty can be overcome. Namely, one can deduce an analog of Theorem \ref{thm:IntroMult}\eqref{it:Bound}  for many  spherical pairs from the following conjecture .
\begin{conj}\label{conj:fin}
Let $G$ be a reductive group defined over a non-Archimedean field $F$ of characteristic $0$ and let $H_1,H_2\subset G$ be its (algebraic) spherical subgroups. Let $\chi_i$ be characters of $H_i\DimaG{(F)}$.
Fix a character $\lambda$ of the Bernstein center $\fz(G\DimaG{(F)})$.

Then the space of distributions which are:

\begin{enumerate}
\item left $(H_1\DimaG{(F)},\chi_1)$-equivariant,
\item right $(H_2\DimaG{(F)},\chi_2)$-equivariant,
\item $(\fz(G\DimaG{(F)}),\lambda)$-eigen,
\end{enumerate}
is finite-dimensional. Moreover, this dimension is uniformly bounded when $\lambda$ varies.
\end{conj}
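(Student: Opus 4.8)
The plan is to carry over to the non-Archimedean setting the $D$-module strategy behind \Cref{thm:main} and \Cref{thm:IntroMult}, with ``holonomic'' replaced by ``wave front set of dimension $\dim G$''. First, I would establish a wave front bound: every distribution $\xi$ on $G(F)$ that is left $(H_1(F),\chi_1)$-equivariant, right $(H_2(F),\chi_2)$-equivariant and $(\fz(G(F)),\lambda)$-eigen satisfies $\operatorname{WF}(\xi)\subseteq S(F)$, where $S$ is the variety of \Cref{prop:S}. The left and right equivariance bound the wave front set by the double conormal variety $\{(g,\alpha):\langle\alpha,\h_1\rangle=0,\ \langle\alpha,\Ad^*(g)(\h_2)\rangle=0\}$, just as one estimates the wave front set of an equivariant distribution by the conormals to the orbits; and the $\fz$-eigen condition, which plays the role of an infinitesimal character, forces $\alpha$ to be nilpotent, concretely by Harish-Chandra descent to a neighbourhood of a semisimple point, where an eigendistribution of the Bernstein center has transversal behaviour confined to (derivatives of) nilpotent orbital integrals on the centralizer. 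For relative characters this is part (i) of \Cref{thm:AGS}, and its proof uses only the equivariance and the eigen-property, so the extension to all $\xi$ as in the conjecture is routine. By \Cref{thm:geo}, we then have a single conic closed set $S$ with $\dim S=\dim G$ that controls all such $\xi$ at once.

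Second---and this is the core---I would prove a non-Archimedean analogue of the Bernstein--Kashiwara theorem (\Cref{thm:DimSol}): the space $V_\lambda$ of distributions satisfying (1)--(3) is finite-dimensional. The scheme is a Noetherian induction along an $H_1\times H_2$-invariant algebraic stratification $G=\bigsqcup_j G_j$ that refines $U$ and its complement and is adapted to $S$, so that over each stratum $S$ meets only finitely many conormal directions. On the open stratum $U$, part (ii) of \Cref{thm:AGS} says $\xi|_{U(F)}$ is locally constant; combined with the equivariance this makes it depend on finitely many numbers, so the top-stratum contribution is finite-dimensional. For $\xi$ vanishing on $U(F)$ one passes to the leading transversal jet of $\xi$ along the next stratum $G_j$ in the conormal directions allowed by $S$; since $S$ has the minimal dimension $\dim G$, only finitely many such jet-types occur, and each yields a distribution on $G_j(F)$ with a wave front bound of the same shape. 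Iterating over the finitely many strata bounds $\dim V_\lambda$; the same reasoning gives, en route, the finite-orbit statement over $F$ in the spirit of \Cref{thm:FinOrb}.

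Finally, I would make the bound independent of $\lambda$. Neither the wave front estimate above nor the count $\dim S=\dim G$ of \Cref{thm:geo} involves $\lambda$, so the stratification and the enumeration of admissible transversal jet-types are done once and for all; the parameter $\lambda$ only selects which of the finitely many possibilities are realized. Packaging the spaces $V_\lambda$ as the fibres of a coherent sheaf over $\operatorname{Spec}\fz(G(F))$ (restricted to the finitely many Bernstein components meeting the equivariance conditions), constructibility bounds every fibre by the generic one---the non-Archimedean counterpart of the relative, effective Bernstein--Kashiwara argument of \Cref{thm:FinSol} and \S\ref{subsec:DimSolRel}. The announced bound on $\Hom_{\fh}$-spaces then follows from $V_\lambda$ as explained in the text above.

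The hard part is the second step. Over a non-Archimedean field there is no ring of algebraic differential operators acting on distributions, so neither Kashiwara's nor Bernstein's proof of \Cref{thm:DimSol} transcribes, and one must construct from purely microlocal data a notion of ``leading term of a distribution along a stratum'' and show that a $\dim G$-dimensional wave front set admits only finitely many such leading terms---that is, propagate the finiteness of part (ii) of \Cref{thm:AGS} through the whole stratification. Developing this microlocal substitute for the Bernstein--Kashiwara theorem over a $p$-adic field is where the real work lies.
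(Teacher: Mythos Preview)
The statement you were asked to prove is \Cref{conj:fin}, which in the paper is a \emph{conjecture}, not a theorem: the paper offers no proof. What the paper does say (in the paragraph following the conjecture) is exactly the first of your three steps: \Cref{thm:geo} together with \Cref{thm:AGS}\eqref{thm:AGS:S} already bound the wave front set of any distribution satisfying (1)--(3) by $S(F)$, and hence by a set of dimension $\dim G$. The paper then states that what remains is to prove analogs of \Cref{thm:DimSol} and \Cref{thm:FinSol} for the integral system (1)--(3), and leaves this open.

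Your proposal is therefore not being compared to a proof but to a stated open problem, and you have correctly located the gap yourself: your second step is precisely the missing non-Archimedean Bernstein--Kashiwara theorem that the paper identifies as the obstruction. Your stratification-and-leading-jet outline is a plausible line of attack, but as you concede, it is not a proof: the claim that a $\dim G$-dimensional wave front bound forces only finitely many transversal jet-types along each stratum, and that these jets again satisfy a system of the same shape so that the induction closes, is exactly the substantive analytic input that has no known substitute for the $D$-module machinery over a $p$-adic field. Likewise, your third step (packaging the $V_\lambda$ as fibres of a coherent sheaf and invoking constructibility) presupposes the finiteness you have not yet established and a module-theoretic framework that does not obviously exist here. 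So your write-up is a faithful restatement of the strategy the paper hints at, with the same acknowledged hole; it is not a proof of the conjecture, and the paper does not claim one either.
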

Note that \Cref{thm:geo} and \Cref{thm:AGS}\eqref{thm:AGS:S} imply that the dimension of (the Zariski closure of) the wave front set of a distribution that satisfies (1-3)  does not exceed $\dim G$.  In many ways the wave front set replaces the singular support, in absence of the theory of differential operators (see, e.g., \cite{Aiz,AD,AGS,AGK}). Thus, in order to prove \Cref{conj:fin}, it is left  to prove analogs of   \Cref{thm:DimSol,thm:FinSol} for the integral system of equations (1-3).

\subsection{Structure of the paper}
In \S \ref{sec:geo}, we prove Theorem \ref{thm:geo} using \Rami{a theorem of Steinberg \cite{Ste76} concerning the Springer resolution.}

In \S \ref{sec:DimSol}, we prove an effective version of Theorem \ref{thm:DimSol}, and then adapt it to algebraic families. We also derive Theorem \ref{thm:FinOrb}.

In \S \ref{sec:Rep}, we derive Theorem \ref{thm:IntroMult}  from Theorem \ref{thm:geo} and \S \ref{sec:DimSol}. We do that by embedding the multiplicity space into a certain space of \DimaD{relative} characters.

In  Appendix \ref{app:dist}, we prove \Cref{lem:PullDist} which computes the pullback of the D-module of distributions with respect to a closed embedding. We use this lemma in \S \ref{sec:DimSol}.
%

\subsection{Acknowledgements}
We thank Eitan Sayag and Bernhard Kroetz for fruitful discussions.
We thank Joseph Bernstein for telling us the sketch of his proof of Theorem \ref{thm:DimSol}.
\DimaG{We thank Toshiyuki Kobayashi, Alexander Shamov, and the anonymous referee for useful remarks.}

\DimaF{A.A. and D.G. will always be grateful to Joseph Bernstein for introducing them to the amazing world of algebra, for sharing his knowledge, his approaches to problems and his philosophy for already more than half of their lives, and for being a shining example forever.}

\Rami{
\section{Proof of Theorem \ref{thm:geo}}\label{sec:geo}
It is enough to prove the theorem for a reductive group $G$ defined over an algebraically closed field 
of characteristic $0$.
Since $S$ includes the zero section of $\DimaB{T^*G \cong }G \times \fg^*$, we have $\dim S \geq \dim G$. Thus, it is enough to prove that $\dim S \leq \dim G$.
Let $\BB$ denote the flag variety of $G$ and $\cN\subset \fg^*$ denote the nilpotent cone.
Since $G$ is reductive, we can identify $$T^*\BB \cong \{ (B,X) \Dima{\in \BB \times \fg^*} \, \vert \, X \in (\mathrm{Lie} B)^{\bot} \}.$$  Recall the Springer resolution $\mu:T^*\BB \to \cN$ defined by $\mu(B,X)=X$ and consider the following diagram.

\begin{equation}\label{eq:smalldiag}
\xymatrixcolsep{5pc}
\xymatrix{
 T^*\BB \times T^*\BB \ar@{->>}[rd]^{\mu \times \mu}  &  & G\times \cN\ar[ld]^{\alpha} \\
  & \cN \times \cN  \ar[d]^{res} & \\
  & \fh_1^* \times \fh_2^* &
}
\end{equation}


Here, $\alpha$ is defined by $\alpha(g,X)=(X,\Ad^*(g^{-1})X)$, and $res$ is the restriction.
Passing to the fiber of $0\in \fh_1^* \times \fh_2^*$, we obtain the following diagram.

\begin{equation}\label{eq:fiberdiag}
\xymatrixcolsep{5pc}
\xymatrix{
 L_1 \times L_2 \ar@{->>}[rd]^{\mu'}  &  & S\ar[ld]^{\alpha'}\\
  & \cN_{\fh_1} \times \cN_{\fh_2}  &
 }
\end{equation}


Here, ${\NC}_{\fh_i}:=\NC \cap \fh_i^{\bot}$ and $L_i:=\{ (B,X) \in T^*\BB \, \vert \, X \in \fh_i^{\bot} \}$.
We need to estimate $\dim S$.
We do it using the following lemma.
\begin{lemma}[See \S \ref{sec:lemma} below]\label{lem:tech}
Let $\varphi_i:X_i\to Y$, $i=1,2$, be morphisms of algebraic varieties. Suppose that $\varphi_2$ is surjective. Then there exists $y\in Y$ such that
$$
\dim X_1\leq \dim X_2 + \dim \varphi_1^{-1}(y) - \dim\varphi_2^{-1}(y).
$$
\end{lemma}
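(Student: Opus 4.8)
The plan is to prove this by passing to a generic point of the closure of the image of $\varphi_1$ and invoking the theorem on the dimension of the fibres of a dominant morphism. One may assume $X_1\neq\emptyset$ (otherwise there is nothing to prove), and since $\dim\varphi_1^{-1}(y)\geq\dim(\varphi_1|_{X_1'})^{-1}(y)$ for every irreducible component $X_1'\subseteq X_1$ and every $y$, it suffices to prove the inequality with $X_1$ replaced by a component of maximal dimension. So I would assume $X_1$ irreducible and set $Z:=\overline{\varphi_1(X_1)}\subseteq Y$, an irreducible closed subvariety.

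Next I would produce, via the fibre-dimension theorem for the dominant morphism $\varphi_1\colon X_1\to Z$, a dense open $\Omega_1\subseteq Z$ contained in $\varphi_1(X_1)$ with $\dim\varphi_1^{-1}(y)=\dim X_1-\dim Z$ for all $y\in\Omega_1$. For $\varphi_2$ the key observation — and the only place where the hypothesis is used — is that surjectivity of $\varphi_2$ gives $Z\subseteq\varphi_2(X_2)$, so $W:=\varphi_2^{-1}(Z)$ maps onto $Z$. Writing $W=W_1\cup\dots\cup W_k$ as a union of irreducible components and setting $Z_j:=\overline{\varphi_2(W_j)}$, irreducibility of $Z$ forces $Z_j=Z$ for at least one $j$, while for $y$ outside the proper closed subset $\bigcup_{Z_j\subsetneq Z}Z_j$ one has $\varphi_2^{-1}(y)=\bigcup_{Z_j=Z}(\varphi_2|_{W_j})^{-1}(y)$. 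Applying the fibre-dimension theorem to each $\varphi_2|_{W_j}\colon W_j\to Z$ with $Z_j=Z$ yields a dense open $\Omega_2\subseteq Z$ on which $\dim\varphi_2^{-1}(y)=\max_{Z_j=Z}\bigl(\dim W_j-\dim Z\bigr)\leq\dim X_2-\dim Z$.

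Finally, since $Z$ is irreducible and nonempty, $\Omega_1\cap\Omega_2$ is a nonempty open subset; choosing any $y$ in it (so $y\in Z\subseteq Y$) gives
\[
\dim X_2+\dim\varphi_1^{-1}(y)-\dim\varphi_2^{-1}(y)=\dim X_2+(\dim X_1-\dim Z)-\dim\varphi_2^{-1}(y)\geq\dim X_1,
\]
which is the claim. I expect the only delicate point to be the bookkeeping in the second step: checking that the components of $\varphi_2^{-1}(Z)$ dominating $Z$ are exactly the ones contributing to a generic fibre of $\varphi_2$, and that surjectivity of $\varphi_2$ is precisely what makes $\varphi_2^{-1}(Z)\to Z$ surjective — everything else is the standard upper-semicontinuity and generic-fibre-dimension package.
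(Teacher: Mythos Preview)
Your proof is correct and follows essentially the same approach as the paper's: reduce to an irreducible component of $X_1$ of maximal dimension, let $Z$ be the closure of its image, apply the generic-fibre-dimension theorem to $\varphi_1$, then decompose $\varphi_2^{-1}(Z)$ into irreducible components, use surjectivity of $\varphi_2$ to guarantee at least one component dominates $Z$, and apply the fibre-dimension theorem to those components. The only cosmetic differences are notation (the paper's $Z,W$ are your $X_1,Z$) and that the paper phrases the $\varphi_1$-step as an inequality rather than an equality on the generic fibre; the logical structure is identical.
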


By this lemma, applied to $\phi_1=\alp'$ and $\phi_2=\mu'$, it is enough to estimate the dimensions of $L_i$ and of the fibers of $\mu'$ and $\alpha'$.
\begin{lemma}
We have $\dim L_1=\dim L_2 = \dim \BB.$
\end{lemma}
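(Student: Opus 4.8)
The plan is to analyze the lemma through the projection $L_i\to\BB$, $(B,X)\mapsto B$. Its fibre over a Borel subgroup $B$ is the linear space $(\mathrm{Lie}\,B)^{\bot}\cap\fh_i^{\bot}=(\mathrm{Lie}\,B+\fh_i)^{\bot}\subset\fg^*$, of dimension $\dim\fg-\dim(\mathrm{Lie}\,B+\fh_i)$; equivalently, this fibre is the conormal space at $B$ to the $\bfH_i$-orbit through $B$. Since the zero section of $T^*\BB$ is contained in $L_i$ we automatically have $\dim L_i\geq\dim\BB$, so the whole content is the reverse inequality.

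First I would observe that the function $B\mapsto\dim(\mathrm{Lie}\,B+\fh_i)$ is constant along $\bfH_i$-orbits on $\BB$: conjugating $B$ by $h\in\bfH_i$ replaces $\mathrm{Lie}\,B+\fh_i$ by $\Ad(h)(\mathrm{Lie}\,B+\fh_i)$, using $\Ad(h)\fh_i=\fh_i$. Since $\bfH_i$ is spherical it has only finitely many orbits $O_1,\dots,O_r$ on $\BB$, so $L_i=\bigsqcup_j p_i^{-1}(O_j)$ is a finite union of locally closed subsets (each $p_i^{-1}(O_j)$ being the conormal bundle $T^*_{O_j}\BB$), and it suffices to prove $\dim p_i^{-1}(O_j)=\dim\BB$ for every $j$. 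Fixing $B_j\in O_j$, all fibres of $p_i$ over $O_j$ have the common dimension $\dim\fg-\dim(\mathrm{Lie}\,B_j+\fh_i)$, whence $\dim p_i^{-1}(O_j)=\dim O_j+\dim\fg-\dim(\mathrm{Lie}\,B_j+\fh_i)$.

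It then remains to compute $\dim O_j$. The stabilizer of the point $B_j\in\BB$ under the conjugation action of $\bfH_i$ is $\bfH_i\cap B_j$, with Lie algebra $\fh_i\cap\mathrm{Lie}\,B_j$ (the kernel of $\fh_i\to\fg/\mathrm{Lie}\,B_j$); in characteristic zero this stabilizer is smooth, so $\dim O_j=\dim\fh_i-\dim(\fh_i\cap\mathrm{Lie}\,B_j)$. Substituting the vector-space identity $\dim(\mathrm{Lie}\,B_j+\fh_i)=\dim\mathrm{Lie}\,B_j+\dim\fh_i-\dim(\mathrm{Lie}\,B_j\cap\fh_i)$ into the formula above gives $\dim p_i^{-1}(O_j)=\dim\fg-\dim\mathrm{Lie}\,B_j=\dim\BB$, uniformly in $j$, and hence $\dim L_i=\dim\BB$ for $i=1,2$.

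I do not expect a genuine obstacle: the argument is just the orbit–stabilizer theorem together with elementary dimension bookkeeping for linear subspaces of $\fg^*$. The only point that must be used essentially is sphericality of $\bfH_i$, which guarantees the union over orbits is finite so that no collection of lower-dimensional strata can accumulate to exceed $\dim\BB$; phrased geometrically, $L_i$ is the union of the conormal bundles $T^*_{O}\BB$ over the finitely many $\bfH_i$-orbits $O$ in $\BB$, and every conormal bundle of a smooth subvariety of $\BB$ has dimension exactly $\dim\BB$.
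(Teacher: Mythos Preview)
Your proof is correct and follows essentially the same approach as the paper: both identify the fibre of $L_i\to\BB$ over $B$ as $\fb^{\bot}\cap\fh_i^{\bot}$, recognize this as the conormal space to the $\bfH_i$-orbit through $B$, and use sphericality to conclude that $L_i$ is a finite union of conormal bundles, each of dimension $\dim\BB$. The only difference is that you spell out the orbit--stabilizer dimension count explicitly, whereas the paper simply invokes the standard fact that conormal bundles have half the dimension of the ambient cotangent bundle.
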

\begin{proof}
Since $H_i$ has finitely many orbits in $\BB$, it is enough  to show that $L_i$ is the union of the conormal bundles to the orbits of $H_i$ in $\BB$.
Let $B\in \BB$, and $\fb=\mathrm{Lie} B$, and identify $T_B\BB \cong \fg/\fb$. Then $T_B(H_i\cdot B)\cong \fh_i /(\fb\cap \fh_i)$ and the conormal space at $B$ to the $H_i$-orbit of $B$ is identified with $ \fb^\bot \cap \fh_i^\bot$.
\end{proof}

Let $(\eta, \Ad^*(g)\eta)\in \mathrm{Im}(\alpha')$. The fiber $(\alpha')^{-1}(\eta, \Ad^*(g)\eta)$ is isomorphic to the stabilizer $G_{\eta}$, and the dimension of the fiber $(\mu')^{-1}(\eta, \Ad^*(g)\eta)$ is twice the dimension of the Springer fiber $\mu^{-1}(\eta)$.
Recall the following  theorem of Steinberg (conjectured by Grothendieck):
\begin{theorem}[{\cite[Theorem 4.6]{Ste76}}]\label{thm:GrConj}
$$
\dim G_\eta-2\dim\mu^{-1}(\eta)=\rk G.
$$
\end{theorem}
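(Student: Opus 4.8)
The plan is to reduce the identity to a dimension count for Springer fibres. Since neither side changes under base change, I would work over an algebraically closed field of characteristic zero and fix an invariant form identifying $\fg^{*}\cong\fg$, so that $\cN\subset\fg$, $G_{\eta}$ is the centraliser of $\eta$, and $\mu^{-1}(\eta)=\{B\in\BB\mid\eta\in(\mathrm{Lie}\,B)^{\bot}\}$ is the Springer fibre over $\eta$. By the orbit--stabiliser theorem $\dim G_{\eta}=\dim G-\dim(G\cdot\eta)$, and it is classical that $\cN$ is irreducible with dense regular orbit, whence $\dim\cN=\dim G-\rk G=2\dim\BB$. Thus \Cref{thm:GrConj} is equivalent to
\begin{equation}\label{eq:plan-springerdim}
\dim\mu^{-1}(\eta)=\dim\BB-\tfrac12\dim(G\cdot\eta).
\end{equation}
To attack \eqref{eq:plan-springerdim} I would use a double fibration. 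Set $Z_{\eta}:=\mu^{-1}(G\cdot\eta)=\{(B,x)\in\BB\times(G\cdot\eta)\mid x\in(\mathrm{Lie}\,B)^{\bot}\}$, with projections $q$ to $G\cdot\eta$ and $p$ to $\BB$. Both are $G$-equivariant surjections onto homogeneous spaces: $q$ is onto because every nilpotent element lies in the nilradical of some Borel, and $p$ is onto because $\cN=G\cdot(\mathrm{Lie}\,B)^{\bot}$ forces $(\mathrm{Lie}\,B)^{\bot}\cap(G\cdot\eta)\neq\varnothing$ for every Borel $B$. Hence $Z_{\eta}\cong G\times_{G_{\eta}}\mu^{-1}(\eta)$ and $Z_{\eta}\cong G\times_{B}\bigl((\mathrm{Lie}\,B)^{\bot}\cap(G\cdot\eta)\bigr)$, so
\[
\dim(G\cdot\eta)+\dim\mu^{-1}(\eta)=\dim Z_{\eta}=\dim\BB+\dim\bigl((\mathrm{Lie}\,B)^{\bot}\cap(G\cdot\eta)\bigr),
\]
and \eqref{eq:plan-springerdim} is equivalent to the assertion that the orbital variety $(\mathrm{Lie}\,B)^{\bot}\cap(G\cdot\eta)$ has dimension exactly $\tfrac12\dim(G\cdot\eta)$.

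For the bound ``$\le$'' in \eqref{eq:plan-springerdim} I would invoke the Steinberg variety $T^{*}\BB\times_{\cN}T^{*}\BB$. Under the identification $T^{*}(\BB\times\BB)\cong T^{*}\BB\times T^{*}\BB$ it equals the union $\bigcup_{w\in W}T^{*}_{Y_{w}}(\BB\times\BB)$ of the conormal bundles to the finitely many $G$-orbits $Y_{w}\subset\BB\times\BB$; each such conormal bundle is irreducible of dimension $\dim(\BB\times\BB)=2\dim\BB$, so $T^{*}\BB\times_{\cN}T^{*}\BB$ is equidimensional of dimension $2\dim\BB=\dim\cN$. Its projection to $\cN$ has fibre $\mu^{-1}(x)\times\mu^{-1}(x)$ over $x$, so the preimage of $G\cdot\eta$, which is $G\times_{G_{\eta}}(\mu^{-1}(\eta)\times\mu^{-1}(\eta))$, has dimension $\dim(G\cdot\eta)+2\dim\mu^{-1}(\eta)$; comparison with $\dim\cN$ gives $\dim(G\cdot\eta)+2\dim\mu^{-1}(\eta)\le2\dim\BB$, which is ``$\le$''. (Equivalently, one checks that $(\mathrm{Lie}\,B)^{\bot}\cap(G\cdot\eta)$ is isotropic for the Kirillov--Kostant--Souriau form on the coadjoint orbit $G\cdot\eta$.)

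The reverse inequality in \eqref{eq:plan-springerdim} --- equivalently, that orbital varieties are Lagrangian, not merely isotropic --- is the real content, and I expect it to be the main obstacle. Here I would use a Jacobson--Morozov $\mathfrak{sl}_{2}$-triple $(\eta,h,f)$ together with the $\mathbb{Z}$-grading $\fg=\bigoplus_{i}\fg_{i}$ by $\mathrm{ad}\,h$-eigenvalues; standard $\mathfrak{sl}_{2}$-theory gives $\dim G_{\eta}=\dim\fg_{0}+\dim\fg_{1}$ (the number of irreducible $\mathfrak{sl}_{2}$-summands of $\fg$), so that \eqref{eq:plan-springerdim} reduces to $\dim\mu^{-1}(\eta)\ge\tfrac12(\dim\fg_{0}+\dim\fg_{1}-\rk G)$. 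Composing the cocharacter of $G$ attached to $h$ with the fibrewise scaling action on $T^{*}\BB$ yields a $\mathbb{G}_{m}$-action on $T^{*}\BB$ that fixes $\eta$ and hence acts on $\mu^{-1}(\eta)$; since the Springer fibre carries an affine paving adapted to this action, $\dim\mu^{-1}(\eta)$ equals the maximum over the $\mathbb{G}_{m}$-fixed components $F$ of $\dim F$ plus the number of attracting weights on the normal bundle of $F$, and a weight count organised by the grading $\fg=\bigoplus_{i}\fg_{i}$ should produce exactly the required lower bound. Granting this, \eqref{eq:plan-springerdim} holds, and
\[
\dim G_{\eta}-2\dim\mu^{-1}(\eta)=\bigl(\dim G-\dim(G\cdot\eta)\bigr)-2\bigl(\dim\BB-\tfrac12\dim(G\cdot\eta)\bigr)=\dim G-2\dim\BB=\rk G.
\]
The delicate points are the bookkeeping of weights and the justification of the paving; alternatively one may simply quote Spaltenstein's equidimensionality theorems for Springer fibres and orbital varieties, though these largely contain the statement to be proved.
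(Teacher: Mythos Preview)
The paper does not prove \Cref{thm:GrConj}; it is quoted from \cite{Ste76} as a black box and plugged directly into the chain of inequalities that finishes the proof of \Cref{thm:geo}. There is therefore no in-paper argument to compare against, and for the purposes of this paper a citation suffices.

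On the substance of your attempt: the reduction to the Springer-fibre dimension formula is correct, and your Steinberg-variety argument for the inequality $\dim\mu^{-1}(\eta)\le\dim\BB-\tfrac12\dim(G\cdot\eta)$ is the standard one and is complete. The gap is precisely where you locate it, in the reverse inequality. Your $\mathbb{G}_m$/paving sketch is not a proof: you have not identified the fixed-point locus of the Springer fibre under the Jacobson--Morozov torus, you have not established that an affine paving compatible with this action exists (for arbitrary $\eta$ in arbitrary type this is itself a substantial theorem of De Concini--Lusztig--Procesi, proved well after \cite{Ste76}), and the promised weight count is asserted rather than carried out. Quoting Spaltenstein is, as you concede, essentially circular. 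If you want a self-contained route, Steinberg's own argument in \cite{Ste76} proceeds by computing the dimension of the incidence variety $\{(x,B_1,B_2):x\in\mathfrak{n}_1\cap\mathfrak{n}_2\}$ in two ways, one of them via the Bruhat decomposition of $\BB\times\BB$; a modern exposition along these lines is in Chriss--Ginzburg, \emph{Representation Theory and Complex Geometry}, \S3.3.
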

Using Lemma \ref{lem:tech}, we obtain for some $(\eta, ad^*(g)\eta)$:
\begin{multline*}
\dim S \leq \dim (L_1\times L_2) + \dim (a')^{-1}(\eta, ad^*(g)\eta) - \dim (\mu')^{-1}(\eta, ad^*(g)\eta) =\\= 2\dim \BB +  \dim G_\eta-2\dim\mu^{-1}(\eta)=2\dim \BB + \rk G = \dim G.
\end{multline*}
\qed

\subsection{Proof of Lemma \ref{lem:tech}}\label{sec:lemma}
Recall that, for a dominant morphism $\varphi: X\to Y$ of irreducible varieties, there exists an open dense $U\subset Y$ such that $\dim X=\dim Y+\dim\varphi^{-1}(y)$ for all $y\in U$ (see, e.~g.,~\cite[Theorem 1.8.3]{RedBook}).
Let $Z$ be an irreducible component of $X_1$ \Dima{of maximal dimension} and $W\subset Y$ be the Zariski closure of $\varphi_1(Z)$. Since $W$ is irreducible, there exists an open dense $U\subset W$ such that
\begin{equation}\label{eq:dim1}
\Dima{\dim X_1 =} \dim Z\leq \dim W+\dim\varphi_1^{-1}(y)
\end{equation}
for all $y\in U$.
Let $V\subset U$ be an open dense subset such that $\varphi_2^{-1}(V)$ intersects those and only those irreducible components $C_1,\ldots,C_j$ of $\varphi_2^{-1}(W)$ that map dominantly to $W$. \Dima{Note that $j>0$ since $\varphi_2$ is surjective.} 
Moreover, without loss of generality, we may assume that, for every $1\leq i\leq j$, all fibers over $V$ of the restriction of $\varphi_2$ to $C_i$ are of the same dimension. Since one of these dimensions has to be equal to $\dim\varphi_2^{-1}(V)-\dim V$, we have, that there is an $1\leq i\leq j$ such that, for all $y\in V$,
\begin{equation}\label{eq:dim2}
\dim V=\dim C_i - \dim(\varphi_2|_{C_i})^{-1}(y) \leq \dim\varphi_2^{-1}(V)-\dim\varphi_2^{-1}(y)\leq \dim X_2-\dim\varphi_2^{-1}(y).
\end{equation}
Thanks to $\dim V=\dim W$, taking any $y\in V$, formulas~\eqref{eq:dim1} and~\eqref{eq:dim2} imply the statement. \qed
}
\section{Dimension of the space of solutions of a holonomic system}\label{sec:DimSol}
In this section, we prove an effective version of Theorem \ref{thm:DimSol}, and then adapt it to algebraic families. We also derive Theorem \ref{thm:FinOrb}.


\subsection{Preliminaries}\label{subsec:DimSolPrel}

\subsubsection{D-modules}
In this section, we will use the theory of D-modules \Rami{on algebraic varieties over an arbitrary field $k$ of characteristic zero}.
We will now recall some facts and notions that we will use.
For a good introduction to the algebraic theory of D-modules, we refer the reader to \cite{Ber} and \cite{Bor}. For a short overview, see \cite[Appendix B]{AMOT}.

By a \emph{D-module} on a smooth algebraic variety $X$ we mean a \DimaF{quasi-}coherent sheaf of right modules over the sheaf $D_X$ of algebras of algebraic differential operators. \DimaF{ By a \emph{finitely generated D-module} on a smooth algebraic variety $X$ we mean a coherent sheaf of right modules over the sheaf $D_X$.}
Denote the category of $D_X$-modules by $\cM(D_X)$.

For a smooth affine variety $V$, we denote $D(V):=D_V(V)$.
Note that the category $\cM(D_V)$ of D-modules on $V$ is equivalent to the category of $D(V)$-modules.
We will thus identify these categories.

The algebra $D(V)$ is equipped with a filtration which is called the geometric filtration and defined by the degree of differential operators. The associated graded algebra with respect to this fitration is the algebra $\cO(T^*V)$ of regular functions on the total space of the cotangent bundle of $V$. This allows us to define the \emph{singular support} of a \DimaF{finitely generated} D-module $M$ on $V$ in the following way. \Rami{Choose a good filtration on $M$, i.e. a filtration such that the associated graded module is a finitely-generated module over $\cO(T^*V)$, and define the singular support $SS(M)$ to be the support of this module. One can show that the singular support does not depend on the choice of a good filtration on $M$.}

This definition easily extends to the non-affine case. A \DimaF{finitely generated} D-module $M$ on $X$ is called \emph{smooth} if $SS(M)$ is the zero section of $T^*X$. This is equivalent to being coherent over $\cO_X$ and to being coherent and locally free over $\cO_X$. The Bernstein inequality states that, for any non-zero \DimaF{finitely generated} $M$, we have $\dim SS(M) \geq \dim X$. If the equality holds then $M$ is called \emph{holonomic}.

For a closed embedding $i:X \to Y$ of smooth affine algebraic varieties, define the functor $i^!:\cM(D_Y)\to \cM(D_X)$ by $i^!(M):=\{m\in M \, \vert \, I_Xm=0\},$
where $I_X\subset \cO(Y)$ is the ideal of all functions that vanish on $X$.
\DimaC{It has a left adjoint functor $i_*:\cM(D_X) \to \cM(D_Y)$, given by tensor product with $i^!(D_Y)$. The functor $i_*$ is an equivalence of categories between $\cM(D_X)$ and the category of $D_Y$-modules supported in $X$. Both $i_*$ and $i^!$ map holonomic modules to holonomic ones.}

If $V$ is an affine space then the algebra $D(V)$ has an additional filtration, called the Bernstein filtration. It is defined by $\deg(\partial/\partial x_i)=\deg(x_i)=1,$  where $x_i$ are the coordinates in $V$. This gives rise to the notion of Bernstein's singular support, that we will denote $SS_b(M)\subset T^*V\cong V\oplus V^*$. It is known that $\dim SS(M)=\dim SS_b(M)$.

We will also use the theory of analytic D-modules. By an \emph{analytic D-module} on a smooth \Rami{complex} analytic manifold $X$ we mean a coherent sheaf of right modules over the sheaf $D_X^{An}$ of algebras of  differential operators with analytic coefficients. 
All of the above notions and statements, except those concerning the Bernstein filtration, have analytic  counterparts. In addition, all smooth analytic D-modules of the same rank are isomorphic.

\subsubsection{Distributions}
We will use the theory of distributions on differentiable manifolds and the theory of tempered distributions on real algebraic manifolds, see e.g. \cite{Hor,AGSc}.
For \DimaG{an  algebraic manifold $X$ defined over $\R$, we denote the space of \DimaG{real valued distributions on $X(\R)$ by $\dist(X(\R),\R):=(C_c^{\infty}(X(\R),\R))^*$} and the space of real valued tempered  distributions (a.k.a. Schwartz distributions) by $\Sc^*(X\DimaG{(\R)},\R):=(\Sc(X\DimaG{(\R),\R}))^*$. Similarly, denote by $\dist(X(\R)):=\dist(X(\R),\C)$ the space of complex valued distributions on $X(\R)$ and by $\Sc^*(X(\R)):=\Sc^*(X(\R),\C)$ the space of complex valued tempered distributions on $X(\R)$. Also,
for an algebraic bundle $\cE$ (complex or real) over $X$ we denote $\dist(X\DimaG{(\R)},\cE):=(C_c^{\infty}(X\DimaG{(\R)},\cE))^* $ and  $\Sc^*(X\DimaG{(\R)},\cE):=(\Sc(X\DimaG{(\R)},\cE))^*$.}


\DimaG{
The algebra $D(X)$  acts on the spaces $\dist(X\DimaG{(\R),\R})$ and $\Sc^*(X\DimaG{(\R),\R})$.
Thus, for affine $X$ we can consider these spaces as $D_{X}$-modules and $\dist(X\DimaG{(\R),\R})$  also as an analytic $D_X$-module.
Similarly, we will regard the spaces $\dist(X(\R))$ and $\Sc^*(X(\R))$ as $D_{X_{\C}}$-modules, where $X_{\C}$ denotes the complexification of $X$.
For non-affine $X$, define the $D_X$-module $\Sc^*_{X,\R}$ by $\Sc^*_{X,\R}(U):=\Sc^*(U(\R),\R)$.
It is easy to see that this sheaf is quasi-coherent. Denote by $\Sc^*_{X}$ the
$D_{X_{\C}}$-module obtained by
complexification of $\Sc^*_{X,\R}$. } 

We define the singular support of a \DimaG{tempered} distribution to be the singular support of the D-module it generates. \Rami{It is well known  that this definition \DimaB{is equivalent to} Definition \ref{def:SISDist}.}
We say that a distribution is holonomic if it generates a holonomic D-module.

\begin{lem}[See \Cref{app:dist}]\label{lem:PullDist}
Let $i:X \to Y$ be a closed embedding of smooth affine \DimaG{ algebraic varieties defined over $\R$}.
Then
$$\dist(X\DimaG(\R))\cong i^!(\dist(Y\DimaG(\R))) \text{ and }\Sc^*(X\DimaG(\R))\cong i^!(\Sc^*(Y\DimaG(\R))).$$
\end{lem}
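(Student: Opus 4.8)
The plan is to verify the algebraic identity $i^!(\dist(Y(\R))) \cong \dist(X(\R))$ directly from the definition $i^!(M) = \{m \in M \mid I_X m = 0\}$, and then observe that the tempered case follows by the identical argument with $\Sc^*$ in place of $\dist$. So I must show that the space of distributions on $Y(\R)$ annihilated by every function in the ideal $I_X \subset \OO(Y)$ is precisely the image of $\dist(X(\R))$ under pushforward along $i$, as $D_Y$-modules. Since $i$ is a closed embedding of smooth affine varieties over $\R$, I may, after choosing coordinates, reduce to the local model $Y = \bA^n$, $X = \bA^m \times \{0\} \subset \bA^n$, with $I_X = (x_{m+1}, \dots, x_n)$; the general case patches together from this via a partition of unity on $Y(\R)$ and the fact that the statement is local on $Y$ (both sides are quasi-coherent sheaves, and $i_*$ commutes with restriction to affine opens meeting $X$). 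Here one must be a little careful that passing to the complexification $Y_\C$ to view these as $D_{Y_\C}$-modules does not change the underlying set of distributions, only the ring acting; the ideal $I_X$ is generated by real functions, so $i^!$ computed over $\C$ agrees with the real computation tensored up.

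The core analytic input is the classical structure theorem for distributions supported on a linear subspace: a distribution $u \in \dist(\bA^n(\R))$ satisfies $x_{m+1} u = \cdots = x_n u = 0$ if and only if $u = v \otimes \delta_0(x_{m+1}, \dots, x_n)$ for a unique $v \in \dist(\bA^m(\R))$, where $\delta_0$ is the Dirac distribution at the origin of the last $n-m$ coordinates. One direction is immediate since $x_j \delta_0 = 0$; for the converse, the standard argument is to note that $\{x_{m+1} u = \cdots = x_n u = 0\}$ forces $u$ to be supported on $X(\R)$, and then to use the transversal Taylor expansion of test functions together with the fact that multiplication by $x_j$ kills exactly the first-order transversal jet, iterating to conclude no transversal derivatives of $\delta_0$ appear. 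This is exactly the content of \cite{Hor} (the theorem on distributions vanishing under multiplication by coordinate functions). Given this, the assignment $v \mapsto v \otimes \delta_0$ is the isomorphism $\dist(X(\R)) \xrightarrow{\sim} i^!(\dist(Y(\R)))$ on the level of vector spaces.

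It remains to check this bijection is $D_Y$-linear, i.e. that it agrees with the $D$-module pushforward $i_* = (-) \otimes_{D_X} i^!(D_Y)$. This is the routine part: $i^!(D_Y)$ is, in the local model, the free right $D_X$-module on the symbol $\partial_{m+1}^0 \cdots \partial_n^0$ (equivalently $\delta_0$), and the action of $\OO(Y)$ and of the vector fields $\partial/\partial x_1, \dots, \partial/\partial x_m$ tangent to $X$ transports through the tensor-over-$\delta_0$ description exactly as it acts on $\dist(X(\R))$, while $x_{m+1}, \dots, x_n$ act as zero on both sides by construction and $\partial/\partial x_j$ for $j > m$ acts on $i^!(D_Y)$ in the prescribed way. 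One then invokes that $i_*$ is an equivalence onto $D_Y$-modules supported on $X$ (stated in the D-module preliminaries above), and that both $i^!(\dist(Y(\R)))$ and $i_*(\dist(X(\R)))$ are such modules with the same underlying space, to conclude they coincide as $D_Y$-modules. The tempered statement is proved verbatim, using that the Schwartz space $\Sc(\bA^n(\R))$ also admits transversal Taylor expansion with Schwartz-class remainder, so Hörmander's structure theorem holds for tempered distributions too.

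The main obstacle I anticipate is not any single deep step but the bookkeeping in globalizing from the linear model: one needs that the tubular-neighborhood / local-coordinate reduction is compatible with the quasi-coherent sheaf structures and with complexification, and that the identification $i^!(\dist) \cong \dist$ glues consistently over an affine cover of $Y$ adapted to $X$. Since both sides are quasi-coherent and the local isomorphisms are canonical (they send $v$ to $v \otimes \delta_{X}$, the delta-distribution along $X$ with respect to a chosen transversal measure, and changing the transversal measure only rescales by an invertible function), the gluing is forced; but writing this down carefully, together with the passage through $Y_\C$, is where the real work lies. I would relegate the explicit verification to the appendix as the excerpt indicates.
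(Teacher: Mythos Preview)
Your proposal is correct, but the paper takes a different and arguably cleaner route that sidesteps exactly the globalization bookkeeping you flag as the main obstacle.

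Rather than reducing to the linear model $\bA^m\subset\bA^n$ and then gluing, the paper works globally on $Y(\R)$ throughout. The key step is a lemma (Lemma~\ref{lem:id}) showing that for any closed submanifold $N\subset M$, an ideal $J\subset C_c^\infty(M)$ which (a) spans the conormal space at every point of $N$ and (b) has no common zero off $N$ must already equal the full smooth vanishing ideal $I_N$; this is proved by induction on codimension via the implicit function theorem. Applied to $J=J(X)\cdot C_c^\infty(Y(\R))$, where $J(X)\subset\cO(Y)$ is the algebraic ideal of $X$, one gets $J=I_{X(\R)}$ immediately from smoothness of $X\subset Y$. Hence a distribution annihilated by $I_X$ already vanishes on all of $I_{X(\R)}$, and since the restriction $C_c^\infty(Y(\R))\to C_c^\infty(X(\R))$ is an open surjection (Lemma~\ref{lem:OpenMap}), it factors continuously through the quotient to give an element of $\dist(X(\R))$. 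The tempered case is not quite identical: the paper isolates a separate lemma (Lemma~\ref{lem:Emb}) showing that if $i_*\xi$ is tempered then $\xi$ is, using the open mapping theorem on $\Sc(Y(\R))\to\Sc(X(\R))$ and the surjectivity result from \cite{AGSc}.

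What each approach buys: the paper's argument never chooses coordinates, never invokes a tubular neighborhood, and has nothing to glue; the price is the somewhat ad hoc Lemma~\ref{lem:id}. Your approach makes the $D$-module structure of the isomorphism more transparent (the explicit $v\mapsto v\otimes\delta_0$), and appeals to results already in H\"ormander, but your reduction ``$Y=\bA^n$, $X=\bA^m\times\{0\}$'' is only available analytically, not algebraically, so the compatibility of the smooth local charts with the algebraic ideal $I_X$ still has to be argued---which is, in effect, the content of the paper's Lemma~\ref{lem:id}.
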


\begin{lem}\label{lem:Free}
Let $M$ be a smooth $\DimaG{D(\bA^n_{\C} )}$-module of rank $r$. \Rami{Embed the space $An(\C^n)$ of analytic functions on $\C^n$ into $\dist(\R^n)$ using the Lebesgue measure.} Then $$\Hom_{\DimaG{D(\bA^n_{\C} )}}(M,\dist(\R^n))=\Hom_{\DimaG{D(\bA^n_{\C} )}}(M,An(\C^n)) \text{ and }\dim \Hom_{\DimaG{D(\bA^n_{\C} )}}(M,\dist(\R^n))= r.$$
\end{lem}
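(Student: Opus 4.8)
The plan is to reduce to the rank-one case and then to an explicit computation. A smooth $D(\bA^n_\C)$-module $M$ of rank $r$ is, by the discussion in \S\ref{subsec:DimSolPrel}, coherent and locally free over $\cO_{\bA^n_\C}$; since it is smooth (its singular support is the zero section), passing to the analytic side, $M^{An}$ is an analytic $D$-module whose underlying $\cO$-module is locally free of rank $r$, hence an integrable connection on $\C^n$. But $\C^n$ is simply connected, so $M^{An} \cong (\mathcal{O}^{An}_{\C^n})^r$ as an analytic $D$-module — equivalently, $M^{An}$ has a basis of $r$ flat sections. The key observation is then that
\[
\Hom_{D(\bA^n_\C)}(M, \dist(\R^n)) \;=\; \Hom_{D^{An}_{\C^n}}(M^{An}, \dist(\R^n)),
\]
because $\dist(\R^n)$, viewed as a (sheaf of) $D$-module(s), is supported on $\R^n \subset \C^n$ on the analytic side but every algebraic differential operator acts through its analytification; more precisely, a $D(\bA^n_\C)$-linear map out of $M$ is determined by where it sends a finite generating set and the relations it must respect are generated by $D$-linear ones, and these are detected analytically. (One can also argue via flatness of $D^{An}_{\C^n}$ over $D(\bA^n_\C)$, or simply note that a global horizontal section of $M$ is the same whether computed algebraically or analytically once $M$ is smooth.) This step — justifying that the algebraic $\Hom$ equals the analytic $\Hom$ — is the one I expect to require the most care, and is likely where the paper invokes the sentence "all smooth analytic D-modules of the same rank are isomorphic" together with a comparison argument.

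Granting the reduction, we are left to compute $\Hom_{D^{An}_{\C^n}}\big((\mathcal{O}^{An}_{\C^n})^r, \dist(\R^n)\big)$. By additivity this is $\big(\Hom_{D^{An}_{\C^n}}(\mathcal{O}^{An}_{\C^n}, \dist(\R^n))\big)^{\oplus r}$, and a $D$-linear map from $\mathcal{O}^{An}_{\C^n}$ to $\dist(\R^n)$ is determined by the image of $1$, which must be a distribution $\xi$ killed by all the operators that kill $1 \in \mathcal{O}^{An}$, i.e. by $\partial/\partial x_1, \dots, \partial/\partial x_n$ (the holomorphic vector fields), equivalently $\xi$ is a distribution on $\R^n$ annihilated by all partial derivatives. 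Such a distribution is constant: $\xi = c \cdot (\text{Lebesgue measure})$ for a scalar $c \in \C$. Hence $\Hom_{D^{An}_{\C^n}}(\mathcal{O}^{An}_{\C^n}, \dist(\R^n))$ is one-dimensional, spanned by the map sending $1$ to Lebesgue measure; this simultaneously shows the dimension is exactly $r$ and that every such homomorphism lands in (the image of) $An(\C^n)$, since under the embedding of $An(\C^n)$ into $\dist(\R^n)$ via Lebesgue measure the constant distribution corresponds to the constant function $c \in An(\C^n)$, and more generally a flat section of $M^{An}$ maps to an analytic function.

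For the equality $\Hom_{D(\bA^n_\C)}(M, \dist(\R^n)) = \Hom_{D(\bA^n_\C)}(M, An(\C^n))$: the inclusion $\supseteq$ is clear, and for $\subseteq$ one observes that, choosing a basis of flat sections $m_1, \dots, m_r$ of $M^{An}$, any $D$-linear $\Phi \colon M \to \dist(\R^n)$ sends each $m_i$ to a distribution annihilated by $\partial/\partial x_1, \dots, \partial/\partial x_n$, hence to a scalar multiple of Lebesgue measure, hence into $An(\C^n)$; since the $m_i$ generate $M^{An}$ over $D^{An}_{\C^n}$ (indeed over $\mathcal{O}^{An}$) and $An(\C^n)$ is a $D^{An}_{\C^n}$-submodule of $\dist(\R^n)$, the whole image of $\Phi$ lies in $An(\C^n)$. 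This completes the proof modulo the algebraic-to-analytic comparison, which I would handle by citing the standard faithful-flatness of $D^{An}_{\C^n}$ over $D(\bA^n_\C)$ restricted to holonomic (or smooth coherent) modules, or by the explicit flat-frame argument above which in fact makes the comparison transparent: on both sides a homomorphism is the same datum, namely $r$ scalars.
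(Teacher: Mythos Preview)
Your argument is correct and follows essentially the same route as the paper: analytify $M$, use that a smooth analytic $D$-module on $\C^n$ of rank $r$ is isomorphic to $An(\C^n)^r$, reduce to rank one by additivity, and finish by observing that a distribution on $\R^n$ annihilated by all $\partial/\partial x_i$ is a scalar multiple of Lebesgue measure. The paper's proof is more terse---it simply asserts the analytic comparison $\Hom_{D(\bA^n_\C)}(M,\dist(\R^n))\cong \Hom_{D_{An}(\bA^n_\C)}(M_{An},\dist(\R^n))$ without justification---so your discussion of that step (via flat frames or faithful flatness) is if anything more careful than what the paper provides.
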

\begin{proof}
Let $M_{An}:=M \otimes_{\cO(\C^n)}An(\C^n)$ and $\DimaG{D_{An}(\bA^n_{\C} )}:=\DimaG{D(\bA^n_{\C} )} \otimes_{\cO(\C^n)}An(\C^n)$   be the analytizations of $M$ and $\DimaG{D(\bA^n_{\C} )}$. Then $$\Hom_{\DimaG{D(\bA^n_{\C} )}}(M,\dist(\R^n)) \cong \Hom_{\DimaG{D_{An}(\bA^n_{\C} )}}(M_{An},\dist(\R^n)).$$
Since $M_{An}$ is also smooth, 
$M_{An} \cong An(\C^n)^r$. Thus it is left to prove that
$$\Hom_{\DimaG{D_{An}(\bA^n_{\C} )}}( An(\C^n),\dist(\R^n))=\Hom_{\DimaG{D_{An}(\bA^n_{\C} )}}( An(\C^n),An(\C^n))$$ and the latter space is one-dimensional.  This follows from the fact that a distribution with vanishing partial derivatives is a multiple of the Lebesgue measure.
\end{proof}

\begin{cor}\label{cor:SmoothDist}
If a distribution generates a smooth $D$-module, then it is analytic.
\end{cor}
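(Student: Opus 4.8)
The plan is to read off the corollary from Lemma~\ref{lem:Free}. Since being analytic is a local property on the real points, I would first reduce to the case $X=\bA^n$. A smooth algebraic variety over $\R$ admits, in a Zariski neighborhood of any given point, an \'etale morphism to $\bA^n$ (given by a system of regular parameters), and such a morphism restricts to a local analytic isomorphism on real points; under this isomorphism the action of algebraic differential operators on distributions, the operation of passing to the $D$-submodule generated by a distribution, and the notion of an analytic distribution are all transported. Hence it suffices to prove the statement for a distribution $\xi\in\dist(\R^n)$; the tempered case $\xi\in\Sc^*(\R^n)$ is subsumed, since the $D$-submodule generated by $\xi$ is in any case a submodule of $\dist(\R^n)$. (Alternatively, one can argue directly over a smooth affine $X$ by globalizing the proof of Lemma~\ref{lem:Free}: after analytization a smooth $D$-module is, locally in the analytic topology, trivialized by its flat connection, and a distribution solving the resulting system with vanishing connection is a constant multiple of the local volume form, hence analytic.)

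So let $\xi\in\dist(\R^n)$ generate a smooth $D(\bA^n_\C)$-module $M$. Being smooth, $M$ is coherent and locally free over $\cO_{\bA^n_\C}$, hence a smooth $D(\bA^n_\C)$-module of some finite rank $r$, so Lemma~\ref{lem:Free} applies to it. The tautological inclusion $j\colon M\hookrightarrow\dist(\R^n)$ is a morphism of $D(\bA^n_\C)$-modules, hence an element of $\Hom_{D(\bA^n_\C)}(M,\dist(\R^n))$.

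By Lemma~\ref{lem:Free}, the subspace $\Hom_{D(\bA^n_\C)}(M,An(\C^n))$ -- embedded into $\Hom_{D(\bA^n_\C)}(M,\dist(\R^n))$ by post-composition with the Lebesgue-measure embedding $An(\C^n)\hookrightarrow\dist(\R^n)$ -- coincides with all of $\Hom_{D(\bA^n_\C)}(M,\dist(\R^n))$. Therefore $j$ factors through $An(\C^n)$, i.e. $M=\Img(j)\subseteq An(\C^n)$. In particular $\xi\in M\subseteq An(\C^n)$, which is exactly the assertion that $\xi$ is analytic. The only step that requires any care is the reduction to $\bA^n$; granting the standard compatibilities listed above, the rest is an immediate application of Lemma~\ref{lem:Free}.
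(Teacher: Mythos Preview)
Your argument is correct and is exactly the deduction the paper intends; the paper states the corollary immediately after Lemma~\ref{lem:Free} with no separate proof, so your write-up simply makes explicit what is left implicit there.

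One small point worth tightening: the \'etale reduction only gives a local \emph{analytic} isomorphism onto an open piece of $\R^n$, so what transports is the analytic $D$-module structure on a small analytic neighborhood, not an algebraic $D(\bA^n_\C)$-module to which the literal statement of Lemma~\ref{lem:Free} applies. In other words, after the reduction you are really invoking the \emph{proof} of Lemma~\ref{lem:Free} (which passes at once to the analytization $M_{An}$, trivializes it locally, and uses that a distribution killed by all partials is a multiple of Lebesgue measure) rather than the lemma as stated. Your parenthetical already says exactly this, so there is no gap --- just be aware that the parenthetical is doing the actual work, and the preceding ``reduce to $\bA^n$ and cite Lemma~\ref{lem:Free}'' is a slight over-simplification.
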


\Rami{
\subsubsection{Lie algebra actions}

\begin{defn}Let $X$ be an algebraic manifold \DimaG{defined} over a field $k$ and $\fg$ be a  Lie algebra over $k$.
\begin{enumerate}[(i)]
\item An action of $\fg$ on $X$ is a Lie algebra map from $\fg$ to the algebra of algebraic vector fields on $X$.
\item Assume that $X$ is affine, fix an action of $\fg$ on $X$ and let $\cE$  be an algebraic vector bundle on $X$. Let $M$ be the space of global \DimaB{regular} {(algebraic)} sections of $\cE$. An action of $\fg$ on $\cE$ is a linear map $T:\fg \to \End_{k}(M)$ such that, for any $\alp\in \fg , \, f\in \cO(X), \, v \in M$, we have
    $$T(\alp)(fv)=(\alp f) v+fT(\alp)v.$$
\item The definition above extends to non-affine $X$ in a straightforward way.
\end{enumerate}
\end{defn}
}
\subsubsection{Weil representation}

\begin{defn}
Let $V$ be a finite-dimensional real vector space. Let $\omega$ be  the standard symplectic form on $V \oplus V^*$. Denote by $p_V:V \oplus V^* \to V$ and $p_{V^*}:V \oplus V^*\to V^*$ the natural projections.
Define an action of the symplectic group $\DimaG{\Sp(V \oplus V^*)}$ on the algebra $D(V)$
by
$$(\partial_v)^g:=\pi(g)(\partial_v):=p_{V^*}(g(v,0)) + \partial_{p_{V}(g(v,0))}, \quad w^g:=\pi(g)w:=p_{V^*}(g(0,w)) + \partial_{p_{V}(g(0,w))}$$
where $v\in V, \, w\in V^*, \, \partial_v$ denotes the derivative in the direction of $v$, and elements of $V^*$ are viewed as linear polynomials and thus differential operators of order zero.
For a  $D(V)$-module $M$ and an element $g\in \Sp(V \oplus V^*)$, we will denote by $M^g$ the $D(V)$-module obtained by twisting the action of $D(V)$ by $\pi(g)$.
\end{defn}

Since the above action of $\Sp(V \oplus V^*)$ preserves the Bernstein filtration on $D(V)$,  the following lemma holds.
\begin{lem}\label{lem:SympSupp}
For a  \DimaF{finitely generated} $D(V)$-module $M$ and $g\in \Sp(V \oplus V^*)$ we have $SS_{b}(M^g)=gSS_{b}(M)$.
\end{lem}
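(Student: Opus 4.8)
The statement to prove is \Cref{lem:SympSupp}: for a finitely generated $D(V)$-module $M$ and $g\in \Sp(V\oplus V^*)$, we have $SS_b(M^g)=g\,SS_b(M)$.

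\medskip

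The plan is to unwind the definitions and observe that the twisting functor $M\mapsto M^g$ is simply restriction of scalars along the algebra automorphism $\pi(g)$ of $D(V)$, so the claim reduces to a statement about how this automorphism interacts with the Bernstein filtration and its associated graded. First I would record the key structural fact, already noted in the excerpt right before the lemma: the action $g\mapsto \pi(g)$ preserves the Bernstein filtration $F_\bullet D(V)$, since by construction $\pi(g)$ sends each of the degree-one generators $\partial_v$ and $w$ (for $v\in V$, $w\in V^*$) to an element of $F_1 D(V)$, hence sends $F_k D(V)$ into $F_k D(V)$ for all $k$; applying the same to $\pi(g^{-1})$ shows $\pi(g)$ restricts to a filtered automorphism of $D(V)$. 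Consequently $\pi(g)$ induces a graded algebra automorphism $\operatorname{gr}\pi(g)$ of $\operatorname{gr}_F D(V)\cong \cO(V\oplus V^*)=\cO(T^*V)$. The next step is to identify this induced automorphism: on the degree-one part $V^*\oplus V$ it is exactly the linear map $g$ (the symplectic transformation), because in each of the two defining formulas the term $\partial_{p_V(g(\cdot))}$ contributes the $V$-component and the term $p_{V^*}(g(\cdot))$ contributes the $V^*$-component of $g$ applied to the relevant basis vector — note this uses that $g$ is symplectic only insofar as $g\in GL(V\oplus V^*)$ for this identification, and that the lower-order ambiguity in $\pi(g)(\partial_v)$ washes out upon passing to $\operatorname{gr}$. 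So $\operatorname{gr}\pi(g)$ is the automorphism of $\cO(T^*V)$ dual to the linear automorphism $g$ of $T^*V=V\oplus V^*$; geometrically, it is pullback of functions along $g^{-1}$.

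\medskip

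With this in hand the argument is mechanical. Choose a good filtration $\Gamma_\bullet$ on $M$ with respect to the Bernstein filtration on $D(V)$, so that $\operatorname{gr}_\Gamma M$ is a finitely generated $\cO(T^*V)$-module with support $SS_b(M)$. Since $\pi(g)$ is a filtered automorphism of $D(V)$, the same filtration $\Gamma_\bullet$ is also a good filtration on the twisted module $M^g$ (the $D(V)$-action on $M^g$ is $d\cdot m := \pi(g)(d)\cdot m$, and $\pi(g)(F_k D(V))=F_k D(V)$, so $F_k D(V)\cdot \Gamma_j \subset \Gamma_{j+k}$ in $M^g$ exactly as in $M$). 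The associated graded $\operatorname{gr}_\Gamma(M^g)$ is then, as an $\cO(T^*V)$-module, the restriction of scalars of $\operatorname{gr}_\Gamma M$ along the automorphism $\operatorname{gr}\pi(g)$ of $\cO(T^*V)$. Restriction of scalars along a ring automorphism $\sigma$ transforms the support of a module by the induced map on $\operatorname{Spec}$, namely $\operatorname{Spec}(\sigma)$; here $\operatorname{Spec}(\operatorname{gr}\pi(g))$ is precisely the linear automorphism $g$ of $T^*V$. Hence $SS_b(M^g)=\operatorname{Supp}\big(\operatorname{gr}_\Gamma(M^g)\big)=g\cdot \operatorname{Supp}\big(\operatorname{gr}_\Gamma M\big)=g\,SS_b(M)$, which is the claim. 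One should also remark that $SS_b(M)$, a priori depending on the choice of good filtration, is well-defined — this is cited in the D-module preliminaries — so the conclusion is filtration-independent.

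\medskip

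I do not expect a serious obstacle here; the lemma is essentially a bookkeeping statement and the excerpt has already flagged the one input that matters (preservation of the Bernstein filtration). The only point requiring a little care is the identification of $\operatorname{gr}\pi(g)$ with the symplectic linear map $g$ itself rather than, say, its inverse or transpose: one must fix conventions for how $\operatorname{Spec}$ of a ring automorphism acts on support, and track the identification $\operatorname{gr}_1 D(V)=V^*\oplus V\cong T^*V$ consistently, so that "$g\,SS_b(M)$" on the right-hand side means the image of $SS_b(M)\subset T^*V$ under the genuine linear map $g$. Getting this right is purely a matter of matching the sign/direction conventions between the action $\pi$ on operators and the geometric action on $T^*V$; once the conventions are pinned down the equality $SS_b(M^g)=g\,SS_b(M)$ drops out.
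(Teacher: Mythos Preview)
Your proposal is correct and is precisely the argument the paper has in mind: the paper's entire proof is the single clause ``Since the above action of $\Sp(V\oplus V^*)$ preserves the Bernstein filtration on $D(V)$, the following lemma holds,'' and you have faithfully unpacked this into the transport of a good filtration and the identification of the induced graded automorphism with $g$.
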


\begin{thm}[\cite{Weil}]
There exists a two-folded cover $p:\widetilde{\Sp}(V \oplus V^*)\to \Sp(V \oplus V^*)\DimaG(\R)$ and a representation $\Pi$ of $\widetilde{\Sp}(V \oplus V^*)$ on the space $\Sc^*(V)$ of tempered distributions on $V$ such that, for any $\alpha \in D(V)$, $g\in \widetilde{\Sp}(V \oplus V^*)$, {$\xi\in\Sc^*(V)$,} we have$$\Pi(g)(\xi\alpha)=(\Pi(g)\xi)\alpha^{p(g)}.$$
\end{thm}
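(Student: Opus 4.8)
The plan is to identify $\Pi$ with (the transpose of) the classical metaplectic representation, built from the Stone--von Neumann theorem. Put $W:=V\oplus V^*$ with its symplectic form $\omega$, and let $\mathcal{H}(W)$ be the associated Heisenberg group, with center $Z\cong\R$. The Schr\"odinger model gives a unitary representation $\rho$ of $\mathcal{H}(W)$ on $L^2(V)$, preserving $\Sc(V)$, with a fixed nontrivial central character $\lambda$, in which $V^*$ acts by multiplication by exponentials of linear functions, $V$ acts by translations, and the derived action $d\rho$ sends $V^*\subset\mathfrak{h}(W)$ to linear polynomials and $V\subset\mathfrak{h}(W)$ to constant-coefficient first-order operators. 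In particular the algebra map $U(\mathfrak{h}(W))\to D(V)$ extending $d\rho$ is surjective, since its image contains all the $\partial_v$ and all linear polynomials, which generate the Weyl algebra $D(V)$.

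First I would recall that $\Sp(W)$ acts on $\mathcal{H}(W)$ by automorphisms fixing $Z$ pointwise, so that for $g\in\Sp(W)$ the twisted representation $h\mapsto\rho(g\cdot h)$ again has central character $\lambda$. By Stone--von Neumann, any two irreducible unitary representations of $\mathcal{H}(W)$ with the same nontrivial central character are unitarily equivalent, so there is a unitary operator $\Pi_0(g)$ on $L^2(V)$, unique up to a scalar in $U(1)$ by Schur's lemma, with $\Pi_0(g)\,\rho(h)\,\Pi_0(g)^{-1}=\rho(g\cdot h)$ for all $h$. The map $g\mapsto\Pi_0(g)$ is then a projective unitary representation of $\Sp(W)(\R)$, i.e.\ it defines a central extension of $\Sp(W)(\R)$ by $U(1)$; the key classical input (see \cite{Weil}, via the Maslov cocycle) is that the class of this extension has order two, so it is pulled back from a two-fold cover $p\colon\widetilde{\Sp}(W)\to\Sp(W)(\R)$, and $\Pi_0$ lifts to a genuine unitary representation of $\widetilde{\Sp}(W)$. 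Writing the intertwiners explicitly for a set of generators of $\Sp(W)$ (Gaussian multiplications for the unipotent radical of the Siegel parabolic, linear changes of variable with Jacobian factors for the Levi, and the Fourier transform for the Weyl element), one sees each $\Pi_0(g)$ is a topological automorphism of $\Sc(V)$; transposing, we obtain the desired representation $\Pi$ of $\widetilde{\Sp}(W)$ on $\Sc^*(V)$.

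It remains to match the action with $D(V)$. Differentiating $\Pi_0(g)\,\rho(h)\,\Pi_0(g)^{-1}=\rho(g\cdot h)$ along $\mathfrak{h}(W)$ gives $\Pi_0(g)\,d\rho(\zeta)\,\Pi_0(g)^{-1}=d\rho(g\zeta)$ for $\zeta\in\mathfrak{h}(W)$, where $g$ acts on $\mathfrak{h}(W)$ through $\Sp(W)$. Comparing with the definition of $\alpha\mapsto\alpha^{p(g)}$ preceding the theorem, one checks on the generators $\partial_v$ ($v\in V$) and $w$ ($w\in V^*$) that $d\rho(g\zeta)=d\rho(\zeta)^{p(g)}$, i.e.\ the $\Sp(W)$-action on $D(V)$ defined earlier is exactly conjugation by $\Pi_0(g)$ transported through $d\rho$. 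Since $d\rho$ is surjective onto $D(V)$, this yields $\Pi_0(g)\,\alpha\,\Pi_0(g)^{-1}=\alpha^{p(g)}$ for every $\alpha\in D(V)$, as operators on $\Sc(V)$. Taking transposes, and using the paper's right $D(V)$-module structure on $\Sc^*(V)$, this identity becomes $\Pi(g)(\xi\alpha)=(\Pi(g)\xi)\,\alpha^{p(g)}$, as desired.

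The main obstacle is the double-cover step: showing that the a priori $U(1)$-valued projective representation $\Pi_0$ descends to a genuine representation of a two-fold (and no larger) cover of $\Sp(W)(\R)$. This is the heart of Weil's construction, where the metaplectic group and the Maslov index enter, and for our purposes it may simply be quoted from \cite{Weil}. A secondary point requiring care is the bookkeeping of conventions (the normalization of $\omega$ and of the central character $\lambda$, left versus right module structures, and formal transposes of differential operators), chosen consistently so that $\Pi_0(g)$ indeed restricts to a topological automorphism of $\Sc(V)$ and so that the transposed formula comes out with $\alpha^{p(g)}$ on the side stated in the theorem.
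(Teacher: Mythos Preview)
Your sketch is correct and is essentially the standard construction of the Weil (metaplectic) representation via Stone--von Neumann. However, the paper does not give its own proof of this theorem: it is stated with the attribution \cite{Weil} and used as a black box. So there is no ``paper's proof'' to compare against; the authors simply quote the result.

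One remark worth making: for the only application in the paper, namely \Cref{cor:Weil} (that $\Sc^*(V)^g\cong\Sc^*(V)$ as $D(V)$-modules for $g\in\Sp(V\oplus V^*)(\R)$), the delicate double-cover step you flag as the main obstacle is actually unnecessary. The paper itself notes right after the corollary that it ``can be derived directly from the Stone--von Neumann theorem.'' Indeed, the existence of \emph{some} intertwiner $\Pi_0(g)$ on $\Sc(V)$ satisfying $\Pi_0(g)\alpha\Pi_0(g)^{-1}=\alpha^{p(g)}$ for each individual $g$ already gives the module isomorphism, and this follows from Stone--von Neumann plus the explicit description of the intertwiners on generators that you wrote down; one never needs to assemble the $\Pi_0(g)$ into a genuine representation of any group or cover. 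So your proof is more than what the paper requires, and the hardest part of it (the Maslov cocycle computation) could be dropped for the purposes at hand.
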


\begin{cor}\label{cor:Weil}
We have an isomorphism of  $D(V)$-modules $\Sc^*(V)^g \cong \Sc^*(V)$ for any $g\in \Sp(V \oplus V^*)\DimaG(\R)$.
\end{cor}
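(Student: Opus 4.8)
Corollary \ref{cor:Weil} asserts that $\Sc^*(V)^g \cong \Sc^*(V)$ as $D(V)$-modules for any $g \in \Sp(V \oplus V^*)(\R)$.

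The plan is to build the isomorphism by hand out of the Weil representation $\Pi$ furnished by the preceding theorem, exploiting that the metaplectic cover $p\colon\widetilde{\Sp}(V\oplus V^*)\to \Sp(V\oplus V^*)(\R)$ is surjective.

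Fix $g\in \Sp(V\oplus V^*)(\R)$. First I would pick a lift $\widetilde g\in\widetilde{\Sp}(V\oplus V^*)$ with $p(\widetilde g)=g$ (possible, and in fact with two choices, since $p$ is a $2$-fold covering). Since $\Pi$ is a group representation, $\Pi(\widetilde g)\colon \Sc^*(V)\to\Sc^*(V)$ is a bijective linear map, with inverse $\Pi(\widetilde g^{\,-1})$. Next I would simply reinterpret the intertwining identity of the theorem, $\Pi(\widetilde g)(\xi\alpha)=(\Pi(\widetilde g)\xi)\,\alpha^{p(\widetilde g)}=(\Pi(\widetilde g)\xi)\,\alpha^{g}$ for all $\xi\in\Sc^*(V)$ and $\alpha\in D(V)$: unwinding the definition of the twist $M\rightsquigarrow M^{g}$ (in which $D(V)$ acts through $\pi(g)$), this identity says exactly that $\Pi(\widetilde g)$ is a morphism of $D(V)$-modules from $\Sc^*(V)$ with its tautological action to $\Sc^*(V)^{g}$. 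Being also bijective, it is an isomorphism $\Sc^*(V)\xrightarrow{\ \sim\ }\Sc^*(V)^{g}$ of $D(V)$-modules; and $g$ was arbitrary. (If desired one even gets a topological isomorphism, as $\Pi(\widetilde g)$ and its inverse are continuous.)

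There is essentially nothing hard here; the statement is a one-line consequence of the Weil intertwining relation together with surjectivity of $p$. The only point requiring care is matching the direction of the twist in the definition of $M^{g}$ with the exponent $p(\widetilde g)$ appearing in $\Pi(\widetilde g)(\xi\alpha)=(\Pi(\widetilde g)\xi)\alpha^{p(\widetilde g)}$: depending on the sign convention fixed for $M\rightsquigarrow M^g$, one may need to take $\widetilde g$ to be a lift of $g^{-1}$ rather than of $g$, which changes nothing since inversion permutes $\Sp(V\oplus V^*)(\R)$. As a consistency check against \Cref{lem:SympSupp}, note that $SS_b(\Sc^*(V))$ is all of $T^*V$ — $\Sc^*(V)$ contains the delta distribution at every point — and this set is visibly $\Sp(V\oplus V^*)(\R)$-stable, exactly as the isomorphism forces it to be.
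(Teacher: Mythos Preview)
Your argument is correct and is precisely the derivation the paper has in mind: the corollary is stated immediately after the Weil representation theorem with no separate proof, and you have simply spelled out how the intertwining identity $\Pi(\widetilde g)(\xi\alpha)=(\Pi(\widetilde g)\xi)\alpha^{p(\widetilde g)}$ makes $\Pi(\widetilde g)$ a $D(V)$-module isomorphism $\Sc^*(V)\to\Sc^*(V)^g$. The paper also remarks that the result can alternatively be obtained from the Stone--von~Neumann theorem, but that is a side comment rather than a different proof.
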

In fact, this corollary can be derived directly from the Stone-von-Neumann theorem.
\subsubsection{Flat morphisms}

\begin{lemma}\label{lem:flat}
Let $\phi:X \to Y$ be a proper morphism of algebraic varieties \DimaG{defined} \Rami{over a field $k$} and $\cM$ be a coherent sheaf on $X$. Then there exists an open dense $U\subset Y$ such that $\cM|_{\phi^{-1}(U)}$ is flat over $U$.
\end{lemma}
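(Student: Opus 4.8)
The plan is to recognize this statement as the classical \emph{generic flatness} theorem and to reduce it to Grothendieck's \emph{generic freeness} lemma: if $A$ is a Noetherian integral domain, $B$ a finitely generated $A$-algebra, and $M$ a finitely generated $B$-module, then there is a nonzero $f\in A$ with $M_f$ free over $A_f$. Note first that properness is not actually needed: since $X$ and $Y$ are of finite type over $k$, the morphism $\phi$ is automatically of finite type and $X$ is Noetherian, hence covered by finitely many affine open subsets. I would organize the argument in two steps — a reduction to the case where $Y$ is integral and affine, and then a direct application of generic freeness followed by gluing.

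\textbf{Reduction to integral affine base.} Recalling that algebraic varieties are reduced, write $Y=Y_1\cup\dots\cup Y_r$ with $Y_i$ the irreducible components, each carrying the reduced induced structure. The open subset $V_i:=Y\setminus\bigcup_{j\ne i}Y_j$ of $Y$ is set-theoretically contained in $Y_i$, and since $Y$ is reduced it coincides, as an open subscheme of $Y$, with the corresponding open subscheme of $Y_i$; moreover $\bigcup_i V_i$ is dense in $Y$. Hence for $x\in\phi^{-1}(V_i)$ we have $\cO_{Y,\phi(x)}=\cO_{Y_i,\phi(x)}$, so flatness of $\cM_x$ over the former is equivalent to flatness over the latter. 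It therefore suffices to treat each $Y_i$ separately, i.e. to assume $Y$ integral; and since a nonempty affine open of an integral scheme is dense, it further suffices to replace $Y$ by such an affine open, $\phi$ and $\cM$ by their restrictions, and assume $Y=\operatorname{Spec}A$ with $A$ a Noetherian domain.

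\textbf{Application of generic freeness.} I would cover $X$ by finitely many affine opens $\operatorname{Spec}B_1,\dots,\operatorname{Spec}B_N$; each $B_\ell$ is a finitely generated $A$-algebra and $M_\ell:=\cM(\operatorname{Spec}B_\ell)$ a finitely generated $B_\ell$-module. Generic freeness gives nonzero $f_1,\dots,f_N\in A$ with $(M_\ell)_{f_\ell}$ free over $A_{f_\ell}$. Setting $f:=f_1\cdots f_N\neq 0$ and $U:=\operatorname{Spec}A_f$, a dense open subset of $Y$, each $(M_\ell)_f$ is free, hence flat, over $A_f=\cO(U)$; as the $\operatorname{Spec}(B_\ell)_f$ cover $\phi^{-1}(U)$, this shows $\cM|_{\phi^{-1}(U)}$ is flat (in fact locally free) over $U$, as required.

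\textbf{On the difficulty.} There is no serious obstacle beyond invoking Grothendieck's generic freeness lemma itself, whose proof (by d\'evissage: filter $M$ by submodules whose subquotients have the form $B/\mathfrak p$, reduce to the case $M=B/\mathfrak p$ a domain of finite type over $A$, and apply Noether normalization over $\operatorname{Frac}(A)$) I would recall if a self-contained account were wanted. The single point that genuinely requires care is that the reduction to integral $Y$ uses reducedness — the statement is false for a non-reduced base, e.g. $Y=\operatorname{Spec}k[\varepsilon]/(\varepsilon^2)$, $X=Y$, $\cM=\cO_{Y_{\mathrm{red}}}$.
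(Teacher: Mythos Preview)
Your proof is correct, and it follows a genuinely different route from the paper's. The paper does not reduce to generic freeness; instead it invokes Grothendieck's theorem (EGA~IV$_3$, Th\'eor\`eme~11.3.1) that the $\phi$-flat locus $V\subset X$ of $\cM$ is open, and then uses properness of $\phi$ to conclude that $Z:=\phi(X\setminus V)$ is closed in $Y$, so that $U:=Y\setminus Z$ is open with $\phi^{-1}(U)\subset V$; density of $U$ follows because the local ring at each generic point of the reduced scheme $Y$ is a field, over which every module is flat. Thus the paper's argument is a two-line citation but genuinely consumes the properness hypothesis, whereas your argument --- reduction to an integral affine base followed by the generic freeness lemma on a finite affine cover of $X$ --- is more elementary, makes no use of properness (as you correctly observe), and in fact yields the stronger conclusion that $\cM|_{\phi^{-1}(U)}$ is locally free over $U$, not merely flat. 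Your remark that reducedness of $Y$ is the one non-negotiable hypothesis is also on point; the paper uses it at exactly the same place (flatness over generic points).
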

\begin{proof}
By \cite[Th{\'e}or{\`e}me II.3.I]{EGA4.3}, the set $V$ of {scheme-theoretic} points $x\in X$ for which $\cM$ is $\phi$-flat at $x$ is open in $X$. Since $\phi$ is proper, the set $Z:=\phi(X\setminus{V})$ is closed in $Y$. Note that $\cM$ is flat over $U:=X\setminus{Z}$, since $\phi^{-1}(U)\subset V$. Moreover, $U$ contains the generic points of the irreducible components of $Y$. Hence, $U\subset Y$ is dense.
\end{proof}

\begin{lemma}[{See, e.~g., \cite[Corollary on p.~50]{Mumford}}]\label{lem:loc_const}
Let $\phi:X \to Y$ be a proper morphism of algebraic varieties \DimaG{defined over a field $k$}  and $\cM$ be a coherent sheaf on $X$ that is flat over $Y$. \Rami{For a point $y\in Y$, let $\cM_y$ denote the pullback of $\cM$ to $\phi^{-1}(y)$.}
Then the function
$$y \mapsto \chi(\cM_y)=\sum_{i=0}^{\infty}(-1)^i\dim_{k(y)} \oH^i(\cM_y)$$
is locally constant.
\end{lemma}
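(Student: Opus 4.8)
Here is how I would prove \Cref{lem:loc_const}. The statement is local on $Y$, so I may assume $Y=\operatorname{Spec} A$ is affine with $A$ Noetherian; replacing $A$ by a further localization I may also assume $A$ connected, so that finitely generated free $A$-modules have a well-defined rank. The plan is to reduce the fiberwise cohomology of $\cM$ to a single bounded complex of finite free $A$-modules, for which the Euler characteristic is manifestly independent of the point.

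The key input is the theorem on cohomology and base change in the form used by Mumford (see \cite[\S 5]{Mumford}, or \cite{EGA4.3} and EGA~III): since $\phi$ is proper, $\cM$ is coherent, and $\cM$ is flat over $Y$, there exists a bounded complex $K^{\bullet}=(0\to K^{0}\to K^{1}\to\cdots\to K^{m}\to 0)$ of finitely generated free $A$-modules together with functorial isomorphisms
$$
\oH^{i}\bigl(X\times_{Y}\operatorname{Spec} B,\ \cM\otimes_{A}B\bigr)\ \cong\ \oH^{i}\bigl(K^{\bullet}\otimes_{A}B\bigr)
$$
for every $A$-algebra $B$; flatness of $\cM$ over $A$ is precisely what allows the terms of $K^{\bullet}$ to be taken free. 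Applying this with $B=k(y)$, and using that the scheme-theoretic fiber is $\phi^{-1}(y)=X\times_{Y}\operatorname{Spec} k(y)$ with $\cM_{y}=\cM\otimes_{A}k(y)$, one obtains $\oH^{i}(\cM_{y})\cong \oH^{i}(K^{\bullet}\otimes_{A}k(y))$ for all $i$ (the left side is finite-dimensional over $k(y)$ and vanishes for $i>\dim X$, so the Euler characteristic is a finite sum). Now $K^{\bullet}\otimes_{A}k(y)$ is a bounded complex of finite-dimensional $k(y)$-vector spaces, and for any such complex the Euler characteristic of the cohomology equals the alternating sum of the dimensions of the terms; hence
$$
\chi(\cM_{y})\ =\ \sum_{i}(-1)^{i}\dim_{k(y)}\oH^{i}(K^{\bullet}\otimes_{A}k(y))\ =\ \sum_{i}(-1)^{i}\dim_{k(y)}\bigl(K^{i}\otimes_{A}k(y)\bigr)\ =\ \sum_{i}(-1)^{i}\rk_{A}K^{i},
$$
which does not depend on $y$. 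Over a base that is not connected the same argument runs on each connected component, so $y\mapsto\chi(\cM_{y})$ is locally constant.

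The only substantial point is the existence of the base-change complex $K^{\bullet}$, which is where properness and flatness are genuinely used; everything afterward is linear algebra over fields. If one is willing to assume $\phi$ projective — which suffices for our applications, and can be arranged in general via Chow's lemma — this input is concrete: cover $X$ by finitely many affine opens, form the \v{C}ech complex of $\cM$ (a complex of flat $A$-modules computing $\oH^{\bullet}$ after any base change), and replace it by a quasi-isomorphic bounded complex of finite free $A$-modules by a standard truncation-and-approximation argument. I expect this construction of $K^{\bullet}$, or equivalently the appeal to \cite[\S 5]{Mumford}, to be the main obstacle; the remainder is routine.
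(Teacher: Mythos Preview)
Your argument is correct and is exactly the standard one: reduce to an affine Noetherian base, invoke the Grothendieck complex $K^{\bullet}$ of finite free modules that computes cohomology after arbitrary base change, and observe that the Euler characteristic on each fiber equals $\sum_i(-1)^i\rk_A K^i$. The paper does not supply its own proof of this lemma; it simply cites \cite[Corollary on p.~50]{Mumford}, and what you have written is precisely the proof given there (Mumford works in the projective case, which, as you note, is all that is needed for the application via \Cref{cor:loc_const}).
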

\Rami{
\begin{cor}\label{cor:loc_const}
Let $Y$ be an algebraic variety \DimaG{defined over a field $k$} and $\cM$ be a coherent sheaf on $Y \times \bP^n$. Then there exists an open dense $U\subset Y$ such that the Hilbert polynomial\footnote{For the definition of Hilbert polynomial see \cite[Chapter III, Exercise 5.2]{Har}.} of $\cM_y$ does not depend on $y$ as long as $y\in U$.
\end{cor}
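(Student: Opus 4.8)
The plan is to reduce the statement to \Cref{lem:flat} and \Cref{lem:loc_const} via the cohomological description of the Hilbert polynomial. Recall that for a coherent sheaf $\cG$ on $\bP^n$ over a field $k'$ one has $P_{\cG}(m)=\chi(\cG(m))=\sum_{i\ge 0}(-1)^i\dim_{k'}\oH^i(\bP^n,\cG(m))$, that $P_{\cG}$ is a numerical polynomial in $m$ of degree at most $n$, and hence that $P_{\cG}$ is completely determined by the $n+1$ values $P_{\cG}(0),\dots,P_{\cG}(n)$. So it suffices to show that for each fixed $m$ the integer $P_{\cM_y}(m)=\chi(\cM_y(m))$ is a locally constant function of $y$ on a suitable dense open subset of $Y$ (the same subset working for all $m$), and then to upgrade ``locally constant'' to ``constant''.

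Here are the steps. Let $p\colon Y\times\bP^n\to Y$ and $q\colon Y\times\bP^n\to\bP^n$ be the projections and, for $m\in\Z$, set $\cM(m):=\cM\otimes_{\cO_{Y\times\bP^n}}q^{*}\cO_{\bP^n}(m)$. The morphism $p$ is proper, so \Cref{lem:flat} provides a dense open $U\subset Y$ such that $\cM|_{p^{-1}(U)}$ is flat over $U$; since $q^{*}\cO_{\bP^n}(m)$ is locally free, $\cM(m)|_{p^{-1}(U)}$ is then flat over $U$ for every $m$, with no further shrinking. Replacing $Y$ by $U$, we may assume each $\cM(m)$ is flat over $Y$. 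Applying \Cref{lem:loc_const} to $p$ and $\cM(m)$ shows that $y\mapsto\chi((\cM(m))_y)$ is locally constant on $Y$. Identifying $p^{-1}(y)$ with $\bP^n_{k(y)}$ and using the base-change identity $(\cM(m))_y\cong\cM_y(m)$ (the twisting sheaf is pulled back along $p^{-1}(y)\to\bP^n$, so it commutes with restriction to the fibre), we get that $y\mapsto P_{\cM_y}(m)$ is locally constant for each $m$. Hence $y\mapsto(P_{\cM_y}(0),\dots,P_{\cM_y}(n))\in\Z^{n+1}$ is locally constant, and since this tuple determines $P_{\cM_y}$, the Hilbert polynomial $P_{\cM_y}$ is locally constant on $U$. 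Finally, we may assume $Y$ is irreducible, whence $U$ is connected and ``locally constant'' becomes ``does not depend on $y$''.

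I do not expect a serious obstacle here: the cohomological characterization of the Hilbert polynomial and the degree bound are classical (Serre), the preservation of $Y$-flatness under tensoring by the pullback of a locally free sheaf is immediate, and $(\cM\otimes q^{*}\cO_{\bP^n}(m))_y\cong\cM_y(m)$ is a trivial instance of compatibility of pullback with tensor product. The only point that genuinely deserves care is the reduction to irreducible $Y$: for reducible $Y$ the generic Hilbert polynomial can differ from one component to another, so no dense open can work, and the assertion must be understood with $Y$ irreducible (equivalently, with ``does not depend on $y$'' weakened to ``is locally constant'').
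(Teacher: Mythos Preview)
Your argument is correct and is exactly the intended derivation: the paper states the result as an immediate corollary of \Cref{lem:flat} and \Cref{lem:loc_const} without further proof, and the details you supply (flatness is preserved under twisting by $q^{*}\cO_{\bP^n}(m)$, the identification $(\cM(m))_y\cong\cM_y(m)$, and the fact that a numerical polynomial of degree $\le n$ is determined by $n+1$ values) are precisely what is needed to make that derivation honest.

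Your observation about reducible $Y$ is also correct and worth recording: as literally stated the corollary fails for reducible $Y$ (take $\cM$ supported on one component of $Y\times\bP^n$), so one must either read ``variety'' as ``irreducible variety'' --- consistent with the Hartshorne reference cited for the Hilbert polynomial --- or weaken the conclusion to ``locally constant on $U$''. Either reading suffices for the only place the corollary is used, namely the proof of \Cref{thm:DegBound}: there the goal is merely to bound $\deg_b(\cM_y)$ on $U$, and a locally constant function on an open set with finitely many irreducible components takes finitely many values.
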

}
\subsection{Dimension of the space of solutions of a holonomic system}\label{subsec:DimSol}

\begin{defn} \DimaG{Let $k$ be a field of characteristic zero.}
\begin{enumerate}[(i)]
\item 
Let $M$ be a \DimaF{finitely generated} $D$-module over an affine space \DimaG{$\mathbb{A}_k^n$ over $k$}. Let $F^i$ be a good filtration on $M$ with respect to the Bernstein filtration on the ring $D_{\DimaG{\mathbb{A}_k^n}}$. Let $p$ be the corresponding Hilbert polynomial of $M$, i.e. $p(i)=\dim F^i$ for large enough $i$. Let $\DimaG{d}$ be the degree of $p$ and $a_{\DimaG{d}}$ be the leading coefficient of $p$. Define the Bernstein degree of $M$ to be $\deg_b(M):=\DimaG{d}!a_{\DimaG{d}}$. It is well-known that $\DimaG{d}$ and $a_{\DimaG{d}}$ do not depend on the choice of good filtration $F^i$.
\item Let $M$ be a \DimaF{finitely generated} $D$-module over a smooth algebraic variety $X$ \DimaG{defined over $k$}. Let $X=\bigcup_{i=1}^l U_i$ be an open affine cover of $X$ and let $\phi_i:U_i \hookrightarrow \DimaG{\mathbb{A}_k^{n_i}}$ be closed embeddings. Denote
$$\deg_{\{(U_i,\phi_i)\}}(M):=\sum_{i=1}^l \deg_b((\phi_i)_*(M\DimaC{|_{U_{i}}})).$$
Define the global degree of $M$ by $\deg(M):=\min  \deg_{\{(U_i,\phi_i)\}}(M)$, where the minimum is taken over the set of all possible affine covers and embeddings.
\end{enumerate}
\end{defn}

In this \Rami{subsection}, we prove
\begin{thm}\label{thm:FinSol}
Let $X$ be \DimaG{an algebraic manifold defined over $\R$}. Let $M$ be a holonomic right $D_{\DimaG{X_{\C}}}$-module. Then $\dim \Hom(M,\Sc^*(X)) \leq \deg(M)$.
\end{thm}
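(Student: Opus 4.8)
The plan is to reduce the general statement to the case of an affine space, where the Bernstein degree is defined, and then to carry out a filtration argument in the spirit of Bernstein's proof of the Bernstein inequality, but tracking dimensions of solution spaces rather than of the module itself. First I would dispose of the global case. Choosing an affine cover $X = \bigcup_{i=1}^l U_i$ together with closed embeddings $\phi_i\colon U_i\hookrightarrow \mathbb{A}^{n_i}$ realizing (or nearly realizing) the minimum in the definition of $\deg(M)$, a homomorphism $M\to \Sc^*_X$ is determined by its restrictions to the $U_i$, so by a Mayer--Vietoris / sheaf-gluing argument it suffices to bound $\dim\Hom_{D_{U_i}}(M|_{U_i},\Sc^*(U_i))$ by $\deg_b((\phi_i)_*(M|_{U_i}))$ for each $i$. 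Using \Cref{lem:PullDist}, which gives $\Sc^*(U_i) \cong \phi_i^!(\Sc^*(\mathbb{A}^{n_i}))$, and the adjunction $\Hom_{D_{U_i}}(N,\phi_i^!(L)) \cong \Hom_{D_{\mathbb{A}^{n_i}}}((\phi_i)_*N, L)$, this reduces everything to the statement: for a holonomic $D$-module $N$ on $\mathbb{A}^n_{\mathbb C}$, $\dim\Hom_{D(\mathbb{A}^n_{\mathbb C})}(N,\Sc^*(\mathbb{R}^n)) \leq \deg_b(N)$.

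For this affine statement I would argue as follows. Since $\Sc^*(\mathbb{R}^n)$ is a $D$-module on which the Weil representation acts (Corollary \ref{cor:Weil}, Lemma \ref{lem:SympSupp}), and since the Bernstein degree and holonomicity are preserved by the $\Sp$-action on $D(\mathbb{A}^n_{\mathbb C})$, one is free to replace $N$ by $N^g$ for a generic $g\in\Sp(V\oplus V^*)$; this lets me assume the Bernstein singular support $SS_b(N)$ is in general position with respect to the zero section $\{0\}\oplus V^*$ and the fiber $V\oplus\{0\}$, in particular that the projection of $SS_b(N)$ to $V^*$ is finite, i.e. $N$ is, up to this symplectic twist, in the "generic position" where it behaves like a holonomic module with good behaviour in the $x$-variables. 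Now equip $N$ with a good filtration $F^\bullet N$ with respect to the Bernstein filtration on $D(\mathbb{A}^n_{\mathbb C})$, with Hilbert polynomial of degree $n$ and leading term $\deg_b(N)\, i^n/n!$. Given linearly independent $\phi_1,\dots,\phi_m \in \Hom_{D}(N,\Sc^*(\mathbb{R}^n))$, I would consider the evaluation map $F^i N \to (\Sc^*(\mathbb{R}^n))^m$, $v\mapsto (\phi_1(v),\dots,\phi_m(v))$, land it (after the generic twist) in a space of distributions supported and controlled well enough that one can estimate the dimension of its image from below by $\sim m\cdot c\, i^{n}$ for a suitable constant, while the source has dimension $\dim F^i N \sim \deg_b(N)\, i^n/n!$. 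Comparing leading coefficients as $i\to\infty$ forces $m \le \deg_b(N)$.

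The main obstacle, and the step requiring real care, is making the lower bound on the dimension of the image of the evaluation map precise: one must show that the images $\phi_j(F^i N)$ inside $\Sc^*(\mathbb{R}^n)$ grow at least like $c\,i^n/n!$ in dimension and that, for distinct $j$, these contributions are "independent enough" to add up — this is exactly where one uses linear independence of the $\phi_j$ together with the generic-position hypothesis on $SS_b(N)$ obtained from the Weil twist, so that the distributions $\phi_j(v)$ can be separated by restricting to suitable polynomial test functions (playing the role of a "leading symbol" estimate for distributions). Concretely, I expect to use that under the generic twist a solution $\phi$ is determined by finitely many of its "Taylor-type" data along the $V^*$-directions, analogously to how Lemma \ref{lem:Free} and Corollary \ref{cor:SmoothDist} handle the smooth case, and to count these data against the Hilbert function $\dim F^i N$. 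Once the inequality $\dim\operatorname{Im} \ge m\cdot(\text{something of degree }n\text{ in }i) \ge $ and $\dim F^i N \le \deg_b(N)i^n/n! + O(i^{n-1})$ are both in hand, the theorem follows by letting $i\to\infty$; summing over the affine charts then yields the bound $\deg(M)$ in the global case.
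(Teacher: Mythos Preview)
Your reduction from the global statement to the affine-space statement is exactly what the paper does: cover $X$ by affines, embed into $\mathbb{A}^{n_i}$, and use \Cref{lem:PullDist} together with the $(i_*,i^!)$ adjunction to reduce to showing $\dim\Hom_{D(\mathbb{A}^n_{\C})}(N,\Sc^*(\R^n))\leq\deg_b(N)$ for a holonomic $N$ on $\mathbb{A}^n_{\C}$.

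The gap is in your treatment of this affine statement. You invoke the Weil twist and then propose a filtration-counting argument, comparing $\dim F^iN$ against the dimension of the image of the evaluation map $F^iN\to(\Sc^*(\R^n))^m$. But the lower bound you need on that image, namely that it grows like $m\cdot c\, i^n$, is neither proved nor even made precise; you yourself flag it as ``the main obstacle'' and only say you ``expect'' it to work. There is no mechanism offered for why the images $\phi_j(F^iN)$ for independent $\phi_j$ should be linearly independent enough inside $\Sc^*(\R^n)$ to contribute additively, and in general there is no reason they should: two independent maps can have the same image. (Incidentally, your direction is also off: you want the projection of $SS_b(N)$ to the base $V$ to be finite, not to $V^*$.)

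The paper's argument is both simpler and complete, and in fact uses the Weil twist far more decisively than you do. By \Cref{cor:PrLag} one chooses $g\in\Sp(V\oplus V^*)(\R)$ so that the projection of $g\cdot SS_b(N)$ onto $V$ is a \emph{finite} map. Then $SS_b(N^g)=g\cdot SS_b(N)$ by \Cref{lem:SympSupp}, and finiteness of this projection forces $\Gr N^g$, hence $N^g$ itself, to be finitely generated over $\cO(\mathbb{A}^n_{\C})$; in other words $N^g$ is a \emph{smooth} $D$-module. Now \Cref{lem:Free} applies directly: $\dim\Hom(N^g,\Sc^*(\R^n))=\rk_{\cO}N^g\leq\deg_b N^g=\deg_b N$, and \Cref{cor:Weil} gives $\Hom(N,\Sc^*(\R^n))\cong\Hom(N^g,\Sc^*(\R^n))$. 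No asymptotic counting is needed; the Weil twist is used not merely to put the singular support in general position but to make the module outright $\cO$-coherent, after which the solution space is computed exactly by the rank.
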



We will need the following geometric lemmas

\begin{lemma}\label{lem:PrFin}
Let $V$ be a vector space \DimaG{over an algebraically closed field}, $L\subset V$ be a subspace and $C\subset V$ be a closed conic algebraic subvariety such that $L\cap C = \{0\}$. Then the projection $p:C \to V/L$ is a finite map.
\end{lemma}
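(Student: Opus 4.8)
The plan is to translate the statement into commutative algebra and reduce it to a graded Nakayama argument; the underlying geometric content is simply that, after projectivizing, $p$ becomes the linear projection of the projective cone $\bP(C)$ from the center $\bP(L)$, and this projection is everywhere defined and finite precisely because $\bP(L)\cap\bP(C)=\emptyset$. Write $k$ for the base field and $I\subset\cO(V)=\mathrm{Sym}(V^*)$ for the (radical) ideal of $C$. We may assume $C\neq\emptyset$, since otherwise $p$ is vacuously finite; then, as $C$ is closed and conic, $0\in C$, the ideal $I$ is homogeneous, and $R:=\cO(C)=\cO(V)/I$ is a finitely generated graded $k$-algebra with $R_0=k$ that is generated in degree $1$. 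Choosing a linear splitting $V=L\oplus L'$ identifies $V/L$ with $L'$, exhibits $p$ as projection along $L$, and on coordinate rings gives an injective graded homomorphism $A:=\cO(V/L)=\mathrm{Sym}(L^{\bot})\hookrightarrow R$, where $L^{\bot}\subset V^*$ is the annihilator of $L$. So the lemma is equivalent to the assertion that $R$ is a finite $A$-module.

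The heart of the argument is to feed in the hypothesis $L\cap C=\{0\}$ as a statement about a dimension-zero fiber. Let $J:=L^{\bot}R\subset R$ be the ideal generated by the images of $L^{\bot}$. Since $L=\{v\in V:\ell(v)=0\text{ for all }\ell\in L^{\bot}\}$, the zero locus of $J$ in $C$ is exactly $C\cap L=\{0\}$; the ideal of the point $0$ in $R$ is the irrelevant maximal ideal $\mathfrak m:=R_+$, so the Nullstellensatz gives $\sqrt J=\mathfrak m$, hence $\mathfrak m^N\subset J$ for some $N$. Because $R$ is generated in degree $1$ one has $\mathfrak m^N=\bigoplus_{d\geq N}R_d$, so $R/J$ is a quotient of $R/\mathfrak m^N=\bigoplus_{d<N}R_d$ and is therefore finite-dimensional over $k$.

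Finally I would invoke graded Nakayama: pick homogeneous $b_1,\dots,b_s\in R$ whose images form a $k$-basis of $R/J$, and check by induction on degree — using that $J=L^{\bot}R$ lives in strictly positive degrees and that $L^{\bot}\subset A_1$ — that $R_{\le d}\subset\sum_i Ab_i$ for every $d$; hence $R=\sum_i Ab_i$ is a finite $A$-module and $p$ is a finite morphism. There is no serious obstacle here: the only points needing care are that conicity of $C$ is used essentially — it makes $I$ homogeneous and $R$ generated in degree $1$, which is what lets ``$\mathfrak m^N\subset J$'' yield finite-dimensionality and then module-finiteness — and the routine bookkeeping in the Nakayama induction.
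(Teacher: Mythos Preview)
Your argument is correct and complete. The only quibble is the claim that $A\hookrightarrow R$ is injective: this can fail (take $C=\{0\}$ with $\dim V/L>0$), but you never use injectivity, only the existence of the graded ring map $A\to R$, so the proof stands.

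Your route differs genuinely from the paper's. The paper proceeds by induction on $\dim L$, reducing to the case of a line: one picks coordinates so that $L$ is the $x_n$-axis, finds a homogeneous polynomial $p$ in the ideal of $C$ that does not vanish on $L$ (so its $x_n^{\deg p}$-coefficient is a nonzero constant), and reads off directly that $x_n|_C$ satisfies a monic equation over $\cO(V/L)$. This is shorter and exhibits the integral dependence explicitly, at the cost of an outer induction and an implicit appeal to the fact that finite maps compose and have closed conic images. Your approach trades the induction for a single global application of the Nullstellensatz plus graded Nakayama; it is slightly more machinery but handles all $\dim L$ at once and makes the role of the hypothesis $L\cap C=\{0\}$ transparent as ``the fiber over $0$ is finite, hence all fibers are, hence the map is finite'' in the graded setting.
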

\begin{proof}
By induction, it is enough to prove the case $\dim L =1$. Choose coordinates $x_1,\dots,x_n$ on $V$ such that the coordinates $x_1,\dots,x_{n-1}$ vanish  on $L$. Let $p$ be a homogeneous polynomial that vanishes on $C$ but not on $L$. Write $p=\sum_{i=1}^{\DimaG{d}} g_ix_n^i$, where each $g_i$ is a homogeneous polynomial of degree $\DimaG{d}-i$ in $x_1,\dots,x_{n-1}$. Then $x_n|_C$ satisfies a monic polynomial equation with coefficients in $\cO(V/L)$.
\end{proof}

\begin{lemma}\label{lem:Lag}
Let $W$ be a $2n$-dimensional \DimaG{real} symplectic vector space, and $C\subset \DimaH{W_{\C}}$ be a closed conic   subvariety of dimension $n$. Then there exists a \DimaG{real} Lagrangian subspace $L\subset W$ such that $\DimaG{L_{\C}\cap C} = \{0\}$.
\end{lemma}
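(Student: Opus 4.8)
The idea is to find a real Lagrangian subspace $L \subset W$ in "general position" with respect to $C$, by a dimension count over a suitable parameter space of Lagrangians. First I would recall that the variety $\Lambda$ of Lagrangian subspaces of the complexification $W_{\C}$ (the Lagrangian Grassmannian) has dimension $\binom{n+1}{2} = \frac{n(n+1)}{2}$, and that its real points $\Lambda(\R)$, parametrizing real Lagrangian subspaces of $W$, form a real form of $\Lambda$ of the same (real) dimension $\frac{n(n+1)}{2}$; in particular $\Lambda(\R)$ is Zariski dense in $\Lambda$. Consider the incidence variety
$$
Z := \{(L,v) \in \Lambda \times \bP(W_{\C}) \mid v \in \bP(L_{\C}) \cap \bP(C)\},
$$
where I abuse notation and write $L_{\C}$ for the complexified Lagrangian parametrized by a point of $\Lambda$, and $\bP(C)$ for the projectivization of the conic variety $C$ (which has dimension $n-1$ since $\dim C = n$ and $C$ is conic, assuming $C \neq \{0\}$; the case $C = \{0\}$ is trivial). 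Projecting $Z$ to $\bP(C)$, the fiber over a point $v$ is the set of Lagrangians containing the line $\C v$, which is (isomorphic to) the Lagrangian Grassmannian of the symplectic reduction $v^{\perp}/\C v$, a space of dimension $\binom{n}{2} = \frac{(n-1)n}{2}$. Hence
$$
\dim Z = \dim \bP(C) + \tfrac{(n-1)n}{2} = (n-1) + \tfrac{(n-1)n}{2} = \tfrac{(n-1)(n+2)}{2} = \tfrac{n(n+1)}{2} - 1 < \dim \Lambda.
$$

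\textbf{Conclusion from the dimension count.} Since $\dim Z < \dim \Lambda$, the first projection $Z \to \Lambda$ is not dominant, so its image is contained in a proper Zariski closed subset $Z' \subsetneq \Lambda$. Because $\Lambda$ is irreducible, $\Lambda \setminus Z'$ is a nonempty Zariski open subset; and because $\Lambda(\R)$ is Zariski dense in $\Lambda$, the set $\Lambda(\R) \setminus Z'$ is nonempty. Any real Lagrangian $L$ corresponding to a point of $\Lambda(\R) \setminus Z'$ satisfies $\bP(L_{\C}) \cap \bP(C) = \emptyset$, which means $L_{\C} \cap C$ contains no nonzero vector, i.e. $L_{\C} \cap C = \{0\}$, as desired.

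\textbf{Main obstacle.} The only genuinely nontrivial point is verifying that the real Lagrangian Grassmannian $\Lambda(\R)$ is Zariski dense in the complex Lagrangian Grassmannian $\Lambda$; the rest is a routine dimension count. This density follows from general principles: $\Lambda$ is a homogeneous space $\Sp(W_{\C})/P$ for a parabolic $P$, defined over $\R$, and $\Sp(W)(\R) = \Sp(W_{\C})(\R)$ acts on $\Lambda(\R)$ with the real points forming a smooth real manifold whose dimension equals $\dim_{\C} \Lambda$; alternatively, one can exhibit an explicit real Lagrangian and note that its $\Sp(W)(\R)$-orbit is Zariski dense. One should also double-check the edge cases $n = 0$ (trivial) and the reduction step computing the fiber dimension of $Z \to \bP(C)$, namely that the Lagrangians through a fixed line $\C v \subset W_{\C}$ are in bijection with Lagrangians of the $(2n-2)$-dimensional symplectic space $v^{\perp}/\C v$ — this is standard symplectic linear algebra and valid over any field. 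A small subtlety is that $v$ ranges over $\bP(C)$ and the fiber computation should hold uniformly; since the reduction construction is algebraic in $v$ (for $v$ with $v \notin$ the zero vector, automatic in $\bP$), $Z$ is genuinely a variety of the stated dimension, and upper-semicontinuity of fiber dimension is not even needed here.
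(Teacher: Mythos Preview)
Your proof is correct and essentially identical to the paper's: both set up the same incidence variety in $\bP(C)\times\cL$ (the paper's $\cL$ is your $\Lambda$), compute the fiber dimension over $\bP(C)$ as $\binom{n}{2}$, obtain $\dim Z = \frac{n(n+1)}{2}-1 < \dim\cL$, and conclude by Zariski density of the real Lagrangian Grassmannian. The paper phrases the density step as ``$\cL$ is smooth, irreducible, and has a real point,'' which is the same fact you invoke.
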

\begin{proof}
Let $\cL$ denote the variety of all Lagrangian subspaces of $W$. Note that $\dim \cL = n(n+1)/2$. Let $P(C)\subset \bP(W_{\DimaH{\C}})$ be the projectivizations of $C$ and $W$.
Consider the configuration space $$X:=\{(x,L)\in P(C) \times \cL \, \vert \, x \subset L\}.$$
\DimaG{Since $\cL$ is smooth, irreducible, and has a real point, it is enough}
 to show that $\DimaH{\overline{p(X)}} \neq \cL$ where $p:X \to \cL$ is the projection. Let $q:X\to P(C)$ be the other projection. Note that $\dim q^{-1}(x)=n(n-1)/2$ for any $x \in P(C)$. Thus $$\dim X = n(n-1)/2+n-1<n(n+1)/2=\dim \cL,$$
and thus $p:X \to \cL_{\DimaH{\C}}$ cannot be \DimaH{dominant}.
\end{proof}

\begin{cor}\label{cor:PrLag}
Let $V$ be a \DimaG{real} vector space of dimension $n$. Consider the standard symplectic form on $V\oplus V^*$. Let $C\subset V_{\DimaH{\C}}\oplus V_{\DimaH{\C}}^*$ be a closed conic subvariety of dimension $n$\DimaG{, defined over $\R$}. Let $p:V_{\DimaH{\C}}\oplus V_{\DimaH{\C}}^*\to V_{\DimaH{\C}}$ denote the projection. Then there exists a linear symplectic automorphism $g \in \Sp(V\oplus V^*)\DimaG{(\R)}$ such that $p|_{gC}$ is a finite map.
\end{cor}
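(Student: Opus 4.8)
The plan is to deduce this from Lemmas \ref{lem:Lag} and \ref{lem:PrFin}, with the transitivity of the real symplectic group on real Lagrangian subspaces serving as the bridge between them.

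First I would apply Lemma \ref{lem:Lag} to the $2n$-dimensional real symplectic vector space $W := V \oplus V^*$ and to the given $n$-dimensional closed conic subvariety $C \subset W_{\C} = V_{\C} \oplus V_{\C}^*$, obtaining a \emph{real} Lagrangian subspace $L \subset V \oplus V^*$ with $L_{\C} \cap C = \{0\}$. Next, I would note that $\ker p = \{0\} \oplus V_{\C}^*$ is the complexification of the real Lagrangian subspace $\{0\} \oplus V^*$. Since $\Sp(V \oplus V^*)(\R)$ acts transitively on the set of Lagrangian subspaces of the real symplectic space $V \oplus V^*$, there is $g \in \Sp(V \oplus V^*)(\R)$ carrying $L$ to $\{0\} \oplus V^*$; being defined over $\R$, $g$ commutes with complexification, so $g(L_{\C}) = \{0\} \oplus V_{\C}^*$ and therefore
\[
gC \cap (\{0\} \oplus V_{\C}^*) = g\bigl(C \cap L_{\C}\bigr) = \{0\}.
\]
Moreover $gC$ is again a closed ($g$ is an isomorphism) conic ($g$ is linear) subvariety. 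Finally I would apply Lemma \ref{lem:PrFin} with ambient space $V_{\C} \oplus V_{\C}^*$, subspace $\{0\} \oplus V_{\C}^*$, and closed conic subvariety $gC$: the projection $gC \to (V_{\C} \oplus V_{\C}^*)/(\{0\} \oplus V_{\C}^*)$ is then finite, and under the canonical identification of the quotient with $V_{\C}$ this projection is precisely $p|_{gC}$.

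The only ingredient not already available in the excerpt is the transitivity of $\Sp(V \oplus V^*)(\R)$ on real Lagrangians; this is standard linear algebra (over the infinite field $\R$ any two Lagrangians admit a common Lagrangian complement $M$, the symplectic form identifies each of them with $M^*$, and one writes down the required symplectic isomorphism accordingly), so I do not anticipate a genuine obstacle — the content of the corollary really is just the two quoted lemmas glued together, plus the bookkeeping that $g$ preserves closedness, conicity and the real structure.
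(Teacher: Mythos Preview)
Your proof is correct and follows essentially the same route as the paper's: apply Lemma~\ref{lem:Lag} to obtain a real Lagrangian $L$ with $L_{\C}\cap C=\{0\}$, use transitivity of $\Sp(V\oplus V^*)(\R)$ on real Lagrangians to move $L$ to $V^*$, and then invoke Lemma~\ref{lem:PrFin}. The paper's proof is the same argument, only slightly more terse (it simply asserts the transitivity and does not spell out the bookkeeping about closedness and conicity that you include).
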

\begin{proof}
By \Cref{lem:Lag} there exists a Lagrangian subspace $L\subset V \oplus V^*$ such that $\DimaG{L_{\C}\cap C} = \{0\}$.
 Since the action of $\Sp(V\oplus V^*)\DimaG{(\R)}$ on Lagrangian subspaces is transitive, there exists $g \in \Sp(V\oplus V^*)\DimaG{(\R)}$ such that \DimaC{$V^*=gL$  and thus $\DimaG{gC\cap V_{\C}^*}=\{0\}$.}
From \Cref{lem:PrFin} we get that $p|_{gC}$ is a finite map.
\end{proof}

\begin{proof}[Proof of \Cref{thm:FinSol}]
Let $X=\bigcup_{i=1}^l U_i$ be an open affine cover of $X$ and let $\phi_i:U_i \hookrightarrow \mathbb{A}_{\DimaH{\C}}^{n_i}$ be closed embeddings. Clearly $$\dim \Hom (M,\Sc^*_{\DimaG{X}})\leq \sum_{i=1}^l \dim \Hom (M|_{\DimaG{(U_i)_{\C}}},\Sc^*(U_i\DimaG{(\R))}).$$ By  \Cref{lem:PullDist} $$\Hom (M|_{\DimaG{(U_i)_{\C}}},\Sc^*(U_i\DimaG{(\R)})) \cong \Hom (M|_{\DimaG{(U_i)_{\C}}},\phi_i^!(\Sc^*(\R^{n_i}))\cong\Hom ((\phi_i)_*(M|_{\DimaG{(U_i)_{\C}}}),\Sc^*(\R^{n_i})).$$
Thus it is enough to show that for any holonomic $D$-module $N$ on an affine space $\bA^n_{\DimaH{\C}}$ we have
$$\dim \Hom (N,\Sc^*(\R^{n})) \leq \deg_b(N).$$

Let $C\subset \bA^{2n}_{\DimaH{\C}}$ be the singular support of $N$ with respect to the Bernstein filtration.
By \Cref{cor:PrLag}, there exists $g \in \Sp_{2n}\DimaG{(\R)}$ such that $p|_{gC}$ is a finite map, where $p:\bA_{\DimaH{\C}}^{2n}\to \bA_{\DimaH{\C}}^n$ is the projection on the first $n$ coordinates. By Corollary \ref{cor:Weil}  we have $$\dim \Hom(N,\Sc^*(\R^n)) = \dim \Hom(N^g,\Sc^*(\R^n)^g)=\dim \Hom(N^g,\Sc^*(\R^n)).$$
By Lemma \ref{lem:SympSupp} we have $SS_b(N^g)=gC$. Let $F$ be a good filtration on $N^g$ (with respect to the Bernstein filtration on $D(\bA_{\DimaH{\C}}^n)$). We see that $\Gr N^g$ is finitely generated over $\cO(\bA_{\DimaH{\C}}^n)$, and thus so is $N^g$. Thus $N^g$ is a smooth $D$-module. Note that $\rk_{\cO(\bA^n)} N^g \leq \deg_b N^g=\deg_b N$. By Lemma \ref{lem:Free}  $\dim \Hom(N^g,\Sc^*(\R^n)) \leq \rk_{\cO(\bA_{\DimaH{\C}}^n)} N^g$.
\end{proof}

\subsection{Families of $D$-modules}\label{subsec:DimSolRel}
\Rami{In this section we discuss families of $D$-modules on algebraic varieties over an arbitrary field $k$ of zero characteristic.}
\begin{notn}
Let $\phi:X \to Y$ be a map of algebraic varieties and $\cM$ be a quasi-coherent sheaf of $\cO_X$-modules. For any $y\in Y$, denote by $\cM_y$ the pullback of $\cM$ to $\phi^{-1}(y)$.
\end{notn}

\begin{defn}Let $X,Y$ be smooth algebraic varieties.
\begin{itemize}
\item If $X$ and $Y$ are affine we define the algebra $D(X,Y)$ to be $D(X)\otimes_k \cO(Y)$.
\item Extending this definition we obtain a  sheaf of algebras $D_{X,Y}$ on $X \times Y$.
\item \Rami{By a family of $D_X$-modules parameterized by $Y$, we mean a sheaf of right }modules over the sheaf of algebras $D_{X,Y}$ on $X \times Y$ which is quasicoherent as a sheaf of $\cO_{X\times Y}$-modules.

\item We call a family of $D_X$-modules parameterized by $Y$ coherent if it is locally finitely generated as a $D_{X,Y}$-module.

\item For a family $\cM$ of $D_X$-modules parameterized by $Y$ and a point $y\in Y$, we  call $\cM_y$ the specialization of $\cM$ at $y$ and consider it with the natural structure of a $D_X$-module.
\item We say that a coherent family $\cM$ is holonomic if every specialization is holonomic.
\end{itemize}

\end{defn}

\begin{thm}\label{thm:DegBound}
Let $X,Y$ be smooth algebraic varieties and $\cM$  be a family of $D_X$-modules parametrized by $Y$. Then $\deg\cM_y$ is bounded when $y$ ranges over the $k$-points of $Y$.
\end{thm}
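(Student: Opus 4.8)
The plan is to reduce to the affine case and then to apply the semicontinuity/local-constancy statements for Hilbert polynomials (Lemmas \ref{lem:loc_const} and \ref{lem:flat}, Corollary \ref{cor:loc_const}) together with a Noetherian induction on $Y$. First I would observe that it suffices to treat the case where both $X$ and $Y$ are affine and irreducible: $X$ has a finite affine cover $X=\bigcup U_i$ with closed embeddings $U_i\hookrightarrow \mathbb{A}^{n_i}$, and by the very definition of the global degree, $\deg \cM_y \le \sum_i \deg_b\bigl((\phi_i)_*(\cM_y|_{U_i})\bigr)$, so bounding each summand suffices; moreover $(\phi_i)_*$ commutes with specialization, so each summand is itself the Bernstein degree of the specialization of a family of $D(\mathbb{A}^{n_i})$-modules parametrized by $Y$. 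Likewise $Y$ is covered by finitely many affine opens, and since we only need a bound over $k$-points we may intersect with an affine open; then a Noetherian induction on $Y$ lets us pass to the generic point of each irreducible component, i.e. replace $Y$ by an affine dense open on which things are well-behaved, and then handle the smaller-dimensional closed complement by the inductive hypothesis. So the problem becomes: given a coherent family $\cM$ of $D(\mathbb{A}^n)$-modules parametrized by an irreducible affine $Y$, find a dense open $U\subseteq Y$ on which $\deg_b \cM_y$ is constant.

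The key step is then to control the Bernstein-filtration Hilbert polynomial of $\cM_y$ uniformly in $y$. Here $D(\mathbb{A}^n,Y)=D(\mathbb{A}^n)\otimes_k \cO(Y)$ carries the Bernstein filtration (in the $\mathbb{A}^n$ variables, with $\cO(Y)$ in degree $0$), whose associated graded is $\cO(\mathbb{A}^{2n})\otimes_k\cO(Y)=\cO(\mathbb{A}^{2n}\times Y)$. Choose a global good filtration on the finitely generated $D(\mathbb{A}^n,Y)$-module $\cM$; its associated graded $\Gr\cM$ is a finitely generated $\cO(\mathbb{A}^{2n}\times Y)$-module, hence (after compactifying the $\mathbb{A}^{2n}$ factor to $\mathbb{P}^{2n}$, or rather working with the Rees construction) corresponds to a coherent sheaf on $\mathbb{P}^{2n}\times Y$ graded over $Y$. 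For each $y$, the fiber $(\Gr\cM)_y$ is a good-filtration associated graded of $\cM_y$ — one must check that specialization of the filtration remains good, which holds on a dense open by generic flatness (Lemma \ref{lem:flat}) — and the Hilbert polynomial of $\cM_y$ with respect to the Bernstein filtration is precisely the Hilbert polynomial of this graded $\cO(\mathbb{P}^{2n})$-module-valued fiber. By Corollary \ref{cor:loc_const} applied to the coherent sheaf on $Y\times\mathbb{P}^{2n}$ determined by $\Gr\cM$, there is a dense open $U\subseteq Y$ on which this Hilbert polynomial is independent of $y$. On $U$ the degree $d$ of the polynomial and its leading coefficient $a_d$ are constant, hence $\deg_b\cM_y=d!\,a_d$ is constant on $U$; in particular it is bounded there. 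Running the Noetherian induction on the closed complement $Y\setminus U$ (which has fewer irreducible components of top dimension, or smaller dimension) completes the argument, and then summing over the finitely many charts $U_i$ of $X$ gives the bound on $\deg\cM_y$.

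I expect the main obstacle to be the bookkeeping needed to make "specialization of a good filtration stays good on a dense open" precise, and to identify the Bernstein Hilbert polynomial of $\cM_y$ with the Hilbert polynomial of the fiber of $\Gr\cM$ as a graded module over $\cO(\mathbb{P}^{2n})$ — i.e. matching the $D$-module-theoretic invariant with the sheaf-theoretic one appearing in Corollary \ref{cor:loc_const}. Concretely, one works with the Rees algebra of $D(\mathbb{A}^n,Y)$ with respect to the Bernstein filtration, whose Proj over $Y$ sits inside $\mathbb{P}^{2n}\times Y$, and views a good filtration on $\cM$ as a finitely generated graded Rees module; generic flatness over $Y$ (Lemma \ref{lem:flat}) guarantees that base change to a general $k$-point $y$ commutes with forming $\Gr$, so the fiber is the Rees module of a good filtration on $\cM_y$. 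Once this identification is in place, the local constancy of the Hilbert polynomial is immediate from Corollary \ref{cor:loc_const}, and the degree and leading coefficient — hence $\deg_b$ — are read off directly. The reduction from $X$ arbitrary to $X=\mathbb{A}^n$, and from $Y$ arbitrary to $Y$ affine irreducible via Noetherian induction, is routine given the definitions of $\deg$ and of a family of $D$-modules.
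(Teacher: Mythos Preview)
Your proposal is correct and follows essentially the same approach as the paper: reduce to $X=\mathbb{A}^n$ and $Y$ affine, choose a good filtration on $\cM$ with respect to the Bernstein filtration on $D(\mathbb{A}^n,Y)$, pass to the associated coherent sheaf on projective space times $Y$, invoke Corollary~\ref{cor:loc_const} to get constancy of the Hilbert polynomial on a dense open, and finish by (Noetherian) induction on $Y$. The paper uses $\mathbb{P}^{2n-1}$ (the Proj of the graded ring $\cO(\mathbb{A}^{2n})$) rather than $\mathbb{P}^{2n}$, and simply asserts that the Hilbert polynomial of $\cN_y$ equals that of $\cM_y$ for the induced filtration; your extra care in invoking generic flatness to ensure that specialization commutes with $\Gr$ is a reasonable way to justify that step.
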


\begin{proof}

Without loss of generality, we can assume that $X=\bA^n$ and $Y$ is an affine variety, and prove that $\deg_b(\cM_y)$ is bounded. We will prove this by induction on $\dim Y$.

The Bernstein filtration on $D(\bA^n)$ gives rise to a filtration on $D(\bA^n,Y)$. Choose a filtration $F$ on $\cM$ which is good with respect to this filtration and let $N:=\Gr{\cM}$, considered as a graded $\cO(\bA^{2n}\times Y)$-module.
Associate to $N$ a coherent sheaf $\cN$ on $\bP^{2n-1}\times Y$.
Let $\cN_y$ be the pullback  of $\cN$ under the embedding of $\bP^{2n-1}$ into $\bP^{2n-1}\times Y$ given by $x \mapsto (x,y)$.
By definition, the Hilbert polynomial of $\cM_y$ with respect to the filtration induced by $F$ is the Hilbert polynomial of $\cN_y$.
\Rami{By Corollary \ref{cor:loc_const}, there exists an open dense subset $U \subset Y$ such that the Hilbert polynomial of $\cN_y$ does not depend on $y$ as long as $y\in U$.}
By the induction hypothesis, $\deg_b(\cM_y)$ is bounded on $Y \setminus U$, and thus bounded on $Y$.
\end{proof}

For an application of this theorem we will need the following lemma.
\DimaD{
\begin{lem}\label{lem:action}
Let a real Lie algebra $\fg$ act on \DimaG{an algebraic manifold $X$ defined over $\R$} and on \DimaG{a complex} algebraic vector bundle $\cE$ on $X$. Fix a natural number $n$ and let $Y$ be the variety of all representations of $\fg$ on $\C^n$. Then there exists a coherent family $\cM$ of $D_X$-modules parameterized by $Y$ such that, for any $\tau \in Y$, we have
\begin{enumerate}
\item $\Hom_{\fg}(\tau,\Sc^*(X(\R),\cE))=\Hom_{D_X}(\cM_{\tau},\Sc^*_X).$
\item  The singular support of $\cM_{\tau}$ (with respect to the geometric filtration) is included in $$\{(x,\phi) \in T^*X \, \vert \,  \forall \alpha \in \fg \text{ we have } \langle \phi, \alpha(x)\rangle =0 \}. $$
\end{enumerate}

\end{lem}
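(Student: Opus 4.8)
The plan is to build $\cM$ by a standard "twist" construction on the ring of differential operators. First I would recall that for a $\fg$-action on $X$, each $\alpha\in\fg$ gives an algebraic vector field on $X$, hence an element $\nu(\alpha)\in D(X)$ of order $\le 1$; similarly the action on $\cE$ gives, for affine $X$, operators on the module $\Gamma(X,\cE)$ that differ from $\nu(\alpha)$ by an $\cO(X)$-linear term, i.e.\ a section of $\cE\otimes\cE^*$. Dualizing, a $\fg$-equivariant map $\tau\to\Sc^*(X(\R),\cE)$ is the same as a map from $\Sc(X(\R),\cE^*)$ into $\tau^*$ intertwining the $\fg$-actions, and hence — using that $\Sc^*(X(\R),\cE)\cong \cE\otimes_{\cO(X)}\Sc^*(X(\R))$ as a $D_X$-module twisted by the connection coming from the $\fg$-action — the Hom space is computed by the $D_X$-module generated by imposing the relations $\alpha\cdot m = $ (the $\tau$-part), for $\alpha\in\fg$. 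Concretely, over affine $X$ with coordinates, set
\[
\cM_\tau := \bigl(D(X)\otimes \cE^* \otimes \C^n\bigr)\big/ \bigl(\text{relations } \rho_\cE(\alpha)\cdot(u\otimes w) - u\otimes \tau(\alpha)w,\ \alpha\in\fg\bigr),
\]
where $\rho_\cE(\alpha)$ is the operator on $\cE^*\otimes D(X)$ induced by the $\fg$-action; this is manifestly a quotient of a coherent $D_{X,Y}$-module (the parameter $\tau\in Y$ enters polynomially through $\tau(\alpha)$), so it assembles into a coherent family $\cM$ over $Y$. Part (1) is then exactly the adjunction/unwinding of these relations against $\Sc^*_X$, and I would spell this out in the affine case and glue, using quasi-coherence of $\Sc^*_{X,\R}$ recalled in \S\ref{subsec:DimSolPrel}.

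For part (2), the point is that the symbols of the defining relations are precisely the functions $(x,\phi)\mapsto \langle\phi,\alpha(x)\rangle$ on $T^*X$ (the symbol of the first-order operator $\nu(\alpha)$; the $\cO(X)$-linear correction terms and the $\tau(\alpha)$ term have order $0$ and do not affect the symbol). I would choose the good filtration on $\cM_\tau$ induced from the order filtration on $D(X)$ tensored with the trivial filtration on $\cE^*\otimes\C^n$; then $\Gr\cM_\tau$ is a quotient of $\cO(T^*X)\otimes(\cE^*\otimes\C^n)$ by the ideal generated by these symbols, so its support lies in the common zero locus $\{(x,\phi): \langle\phi,\alpha(x)\rangle = 0\ \forall\alpha\in\fg\}$, which is the stated set. (One small care: the induced filtration is good, and passing to the associated graded of a quotient only enlarges the relations, never shrinks the symbol ideal — so the containment is in the safe direction.)

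The main obstacle, I expect, is bookkeeping the $\cE$-twist correctly: making precise the $D_X$-module structure on $\Sc^*(X(\R),\cE)$ coming simultaneously from the geometric $D_X$-action on distributions and from the $\fg$-equivariant structure on $\cE$, and checking that the Hom-space identity in (1) is an equality of vector spaces (not just an inclusion) — this needs the fact that $\Sc(X(\R),\cE^*)$ is a "cogenerator-like" object for the relevant categories, which in practice reduces to the affine, trivial-bundle statement that a distribution killed by all $\partial_{x_i}$ is a multiple of Lebesgue measure (used already in \Cref{lem:Free}), combined with a partition-of-unity/Čech argument to globalize. Everything else — coherence of the family, polynomial dependence on $\tau$, the symbol computation — is routine once the affine local model above is written down.
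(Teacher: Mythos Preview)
Your approach is the paper's: reduce to affine $X$, take the quotient of $N\otimes_{\cO_{X\times Y}} D_{X,Y}\otimes_{\C}\C^n$ (the paper uses $N=$ sections of $\cE$, not $\cE^*$, pulled back to $X\times Y$) by the $D_{X,Y}$-submodule generated by the elements $\alpha n\otimes 1\otimes v + n\otimes\xi_{\alpha}\otimes v + n\otimes\tau(\alpha)v$, and read off (2) from the principal symbols of the vector fields $\xi_{\alpha}$. Your ``main obstacle'' is not one: part (1) is an immediate consequence of the universal property of the quotient --- a $D_X$-map from $\cM_{\tau}$ to $\Sc^*_X$ is, by construction, exactly an $n$-tuple of $\cE$-valued distributions satisfying the imposed $\fg$-relations --- so there is no cogenerator property to verify and no need to invoke \Cref{lem:Free}; the paper accordingly just writes down $\cM$ and states that it satisfies the requirements.
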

\begin{proof}
It is enough to prove the lemma for affine $X$. Let $N$ be the coherent sheaf of the regular {(algebraic)} sections of $\cE$ (considered as a sheaf of  $\cO_{\DimaG{X_{\C}}}$-modules). Let $\cN$ be the pullback of $N$ to $\DimaG{X_{\C}\times Y}$. Let $\cN':=\cN\otimes_{\cO_{\DimaG{X_{\C}\times Y}}}D_{\DimaG{X_{\C}, Y}}\otimes_{\C}\C^n$, and  $\cN''\subset \cN'$ be the \Rami{$D_{\DimaG{X_{\C}, Y}}$-submodule generated} by elements of the form $$\alpha n \otimes 1 \otimes v+ n \otimes \xi_{\alpha} \otimes v + n\otimes  f_{\alpha}(v),$$
where $\alpha\in \fg$, $\xi_{\alpha}$  is the vector field on $X$ corresponding to $\alpha$,  and $f_{\alpha}(v)\in D_{\DimaG{X_{\C}, Y_{\C}}}\otimes_{\C}\C^n$ is the $\C^n$-valued regular function on $\DimaG{X_{\C}\times Y}$ given by $f_{\alpha}(v)(x,\tau)=\tau(\alpha)v$. Then $\cM:=\cN'/\cN''$ satisfies the requirements.\end{proof}
}
\Cref{thm:FinSol,thm:DegBound,lem:action} imply \Cref{thm:FinOrb}.

\section{Proof of \Cref{thm:main,thm:IntroMult}}\label{sec:Rep}

In this section, we derive  \Cref{thm:main,thm:IntroMult}  from Theorem \ref{thm:geo} and \S \ref{sec:DimSol}. We do that by embedding the multiplicity space into a certain space of \DimaD{relative} characters.

\subsection{Preliminaries}

For  \DimaG{a reductive group $G$ defined over $\R$}, we denote by $Irr(G\DimaG{(\R)})$ the collection of irreducible admissible smooth \Fre representation of $G\DimaG{(\R)}$ of moderate growth. We refer to \cite{CasGlob,Wal} for the background on these representations.

\begin{theorem}[See {\cite[Theorem 4.2.1]{Wal}}]\label{thm:CentFin}
\Rami{The center $\fz(\cU(\fg))$ of the universal enveloping algebra of the \DimaG{complexified} Lie algebra of $G$ acts finitely on every admissible smooth \Fre representation $\pi$ of $G$ of moderate growth.
This means that there exists an ideal in $ \fz(\cU(\fg))$ of finite codimension that annihilates $\pi$.
}
\end{theorem}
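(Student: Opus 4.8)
The plan is to reduce the assertion to a soft algebraic fact about the underlying Harish-Chandra module, exploiting only that the center $\fz := \fz(\cU(\fg))$ commutes with the action of a maximal compact subgroup $K \subset G$. First I would invoke the Casselman--Wallach globalization theory (see \cite{CasGlob} or \cite[Chapter 11]{Wal}): the module $W := \pi_{K\text{-fin}}$ of $K$-finite vectors is a finitely generated admissible $(\fg,K)$-module, it is dense in $\pi$, and the action of $\fg$ on $\pi$ — hence the action of $\fz$ — is continuous. Therefore any ideal of $\fz$ annihilating $W$ annihilates all of $\pi$, and it suffices to exhibit a finite-codimension ideal $J \subseteq \fz$ with $JW = 0$.

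Next, pick generators $w_1,\dots,w_s$ of $W$ over $\cU(\fg)$. Since $W$ is a $(\fg,K)$-module, every vector is $K$-finite, so the $K$-submodule $M \subseteq W$ generated by $w_1,\dots,w_s$ is finite-dimensional; let $F$ be the finite set of $K$-types occurring in $M$ and let $W(F) \subseteq W$ be the corresponding $K$-isotypic subspace, which is finite-dimensional by admissibility. Because $\fz$ is $\Ad(G)$-invariant and central, the operator $w \mapsto zw$ is $K$-equivariant for each $z \in \fz$, so $\fz$ preserves every $K$-isotypic component, in particular $W(F)$. Hence the kernel $J$ of the algebra homomorphism $\fz \to \End_\C(W(F))$ has finite codimension in $\fz$. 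For $z \in J$, $u \in \cU(\fg)$ and any $j$ we have $z\,(u\, w_j) = u\,(z\, w_j) = 0$, since $z$ is central and $w_j \in W(F)$; as the $u\, w_j$ span $W$, this gives $JW = 0$. Replacing $J$ by the (two-sided) ideal it generates in $\fz$, if desired, one still has finite codimension. This is the ideal required by the theorem.

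The only non-elementary input is the Casselman--Wallach theorem ensuring that $W$ is finitely generated over $\cU(\fg)$; everything else is formal, so there is essentially no obstacle beyond quoting the globalization theory correctly. (A more classical route, which I would mention as an alternative, uses Harish-Chandra's finite-length theorem for finitely generated admissible $(\fg,K)$-modules together with Dixmier's lemma assigning an infinitesimal character to each irreducible subquotient; taking the product of the finitely many resulting maximal ideals of $\fz$ — which, by the Harish-Chandra isomorphism $\fz \cong \C[\ha^*]^W$, a finitely generated $\C$-algebra, all have residue field $\C$ — raised to the power equal to the length of $W$, again yields a finite-codimension annihilating ideal.)
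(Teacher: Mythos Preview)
The paper does not prove this theorem; it is quoted as a known result with a reference to \cite[Theorem 4.2.1]{Wal}. Your argument is correct and is essentially the standard one: pass to the underlying Harish-Chandra module $W$, use finite generation over $\cU(\fg)$ to locate the generators inside a finite-dimensional $K$-stable subspace $W(F)$, and observe that $\fz$ acts on $W(F)$ (hence on $W$, hence on $\pi$ by density and continuity) through the finite-dimensional algebra $\End_\C(W(F))$. The alternative route you mention via finite length and infinitesimal characters is equally standard and also correct.

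Two minor cosmetic remarks. First, since $\fz$ is commutative, the kernel $J$ of $\fz \to \End_\C(W(F))$ is already an ideal, so the parenthetical about replacing $J$ by the ideal it generates is superfluous. Second, in the alternative argument it suffices to take the product of the maximal ideals corresponding to the composition factors (counted with multiplicity); the extra exponent ``length of $W$'' is harmless overkill.
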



\begin{lem}[{\cite[Theorem 1.2 and Corollary 1.4]{Adams}}]\label{lem:GK}
For any \DimaG{reductive group $G$ defined over $\R$}, there exists an involution $\theta$ of $G$ such that, for any $\pi\in \Irr(G\DimaG{(\R)})$, we have $\hat \pi \cong \pi^{\theta}$. 
\end{lem}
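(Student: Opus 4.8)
The plan is to reduce the isomorphism $\hat\pi\cong\pi^\theta$ to an identity of distribution characters, then to a statement about real conjugacy classes, and to satisfy the latter with a real form of the Chevalley involution. First I would record that, for any algebraic involution $\theta$ of $G$ defined over $\R$, precomposition with $\theta$ preserves the class of smooth admissible \Fre representations of moderate growth as well as irreducibility, so $\pi^\theta:=\pi\circ\theta\in\Irr(G(\R))$, and likewise $\hat\pi\in\Irr(G(\R))$. By Harish-Chandra's theorem such a representation is determined up to isomorphism by its distribution character (see, e.g., \cite{Wal}), so it is enough to find $\theta$ with $\Theta_{\pi^\theta}=\Theta_{\hat\pi}$ for all $\pi\in\Irr(G(\R))$. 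Since $\Theta_{\pi^\theta}(g)=\Theta_\pi(\theta(g))$, and the character of the smooth contragredient is the image of $\Theta_\pi$ under inversion, i.e. $\Theta_{\hat\pi}(g)=\Theta_\pi(g^{-1})$, the problem becomes to arrange
\[
\Theta_\pi(\theta(g))=\Theta_\pi(g^{-1})\qquad\text{for all }\pi\in\Irr(G(\R)),\ g\in G(\R).
\]

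Next I would invoke Harish-Chandra's regularity theorem \cite{HCReg}: $\Theta_\pi$ is locally integrable, real-analytic on the regular semisimple locus, and invariant under $G(\R)$-conjugation. As the regular semisimple locus is dense, the displayed identity (for all $\pi$ at once) follows as soon as $\theta(g)$ and $g^{-1}$ lie in the same $G(\R)$-conjugacy class for every regular semisimple $g\in G(\R)$. I stress that this is genuinely a requirement over $\R$, not over $\C$: already in $\mathrm{SL}_2(\R)$ a generic elliptic element is not conjugate to its inverse --- this is why holomorphic and antiholomorphic discrete series have distinct characters --- so neither $\theta=\mathrm{inv}$ nor the complex Chevalley involution will literally suffice.

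It then remains to construct an $\R$-rational involution $\theta$ of $G$ with $\theta(g)\sim_{G(\R)}g^{-1}$ on regular semisimple elements. Over $\C$ this is classical: fixing a pinning, the Chevalley involution $\theta_0$ acts on a maximal torus by inversion and interchanges opposite root groups, and since every regular semisimple element of $G(\C)$ is conjugate into the torus one gets $\theta_0(g)\sim_{G(\C)}g^{-1}$. The two points that need work are: (a) that $\theta_0$ may be adjusted by an inner automorphism so as to become an honest involution and to be defined over $\R$ --- here I would use that replacing $\theta$ by $\mathrm{Int}(x)\circ\theta$ with $x\in G(\R)$ changes neither the isomorphism class of $\pi^\theta$ nor the condition above, so there is room to maneuver; and (b) that the resulting real Chevalley involution preserves each $G(\R)$-conjugacy class of real maximal tori and acts on each such torus by $t\mapsto t^{-1}$ up to conjugation inside its normalizer. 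Granting (b), any regular semisimple $g\in G(\R)$ lies in a real Cartan subgroup on which $\theta(g)=ng^{-1}n^{-1}$ for some $n\in N_G(T)(\R)$, which closes the argument.

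I expect (b) --- obtaining a single involution of $G(\R)$ that is simultaneously compatible with all $G(\R)$-classes of real Cartan subgroups --- to be the crux; the complex statement is elementary, and essentially all the genuine difficulty (including the passage to a possibly non-quasi-split real form) is concentrated there. A clean way to obtain it is to route through the Langlands correspondence for real groups: the smooth contragredient corresponds on parameters to post-composition with the Chevalley involution $C$ of the dual group $\widehat G$; transporting the pinned automorphism $C$ across the duality of based root data produces a pinned, hence $\R$-rational, automorphism $\theta$ of $G$, and unwinding the correspondence yields $\pi^\theta\cong\hat\pi$. Either approach reduces the lemma to the results of \cite{Adams}, which is the natural place for the detailed verification.
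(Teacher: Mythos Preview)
The paper does not prove this lemma at all: it is stated with a citation to \cite{Adams} and used as a black box in \S\ref{subsec:PfIntroMult}. There is therefore nothing in the paper to compare your argument against.

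That said, your outline is a faithful sketch of Adams's proof: reduce via Harish-Chandra's character theory to the requirement that $\theta(g)$ be $G(\R)$-conjugate to $g^{-1}$ for regular semisimple $g$, then construct a real form of the Chevalley involution meeting this requirement. You correctly flag that the real (as opposed to complex) conjugacy is the nontrivial point, and that step~(b) --- producing a single $\R$-rational involution simultaneously compatible with every $G(\R)$-conjugacy class of Cartan subgroups --- is where the genuine work lies. You then defer that step to \cite{Adams}, which is exactly what the paper does. Your Langlands-parameter alternative is also discussed by Adams, though it is not an independent proof so much as a reformulation that still relies on the same structural facts. In short: your proposal is correct, goes well beyond what the paper itself supplies, and ultimately rests on the same cited source.
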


\begin{thm}[Casselman embedding theorem, see {\cite[Proposition 8.23]{CM}}]\label{thm:CasSubRep}
Let $G$ be \DimaG{a reductive group $G$ defined over $\R$} and  $P$ be a minimal parabolic subgroup of $G$. Let $\pi\in Irr(G\DimaG{(\R)})$.   Then there exists a
finite-dimensional
representation $\sigma$ of $P$ and an epimorphism $Ind_{P\DimaG{(\R)}}^{G\DimaG{(\R)}}(\sigma) \twoheadrightarrow \pi$.
\end{thm}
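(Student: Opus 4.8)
The plan is to deduce this from the structure theory of Harish-Chandra modules, following the Casselman--Wallach picture. Since $\pi$ is irreducible admissible smooth \Fre of moderate growth, it suffices, by the Casselman--Wallach globalization theorem, to work at the level of the underlying Harish-Chandra module $\pi_{HC}$: an epimorphism of Harish-Chandra modules $(\Ind_{P(\R)}^{G(\R)}\sigma)_{HC}\twoheadrightarrow \pi_{HC}$ from an admissible finite-length module globalizes to an epimorphism of the associated smooth \Fre representations of moderate growth, because the smooth globalization functor is exact and the smooth globalization of a normalized (or unnormalized, after a twist) principal series $\Ind_{P(\R)}^{G(\R)}\sigma$ with $\sigma$ finite dimensional is the smooth induced representation. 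So the whole problem reduces to: every irreducible Harish-Chandra module is a quotient of $(\Ind_P^G \sigma)_{HC}$ for some finite-dimensional $\sigma$.

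The key step is the Casselman submodule theorem (equivalently, after passing to contragredients, the quotient theorem). First I would invoke Casselman's theorem that the Jacquet module $\pi_{HC}/\mathfrak{n}\pi_{HC}$ (where $\mathfrak n = \operatorname{Lie}N$ for $P = MN$ a Langlands decomposition of the minimal parabolic, and one uses the $\mathfrak n$-adic completion to ensure nonvanishing) is a nonzero finite-dimensional module over $M\DimaG{(\R)}A$, with $A$ acting through generalized weight spaces. Concretely: the center $\fz(\cU(\fg))$ acts on $\pi_{HC}$ through a finite-codimension ideal by Theorem~\ref{thm:CentFin}, hence the $\mathfrak n$-homology $H_0(\mathfrak n, \pi_{HC})$ is finite-dimensional and nonzero; pick an irreducible $M\DimaG{(\R)}$-subrepresentation of a generalized $A$-weight space in it, inflate it to a representation $\sigma$ of $P = MAN$ (letting $N$ act trivially). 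Then Frobenius reciprocity for the Jacquet functor (Casselman's adjunction, or directly the duality between $\mathfrak n$-homology and $\mathfrak n$-invariants in the contragredient) produces a nonzero map $\pi_{HC}\to \Ind_{P}^{G}(\sigma^\vee \otimes \delta)_{HC}$ of Harish-Chandra modules, where $\delta$ is the appropriate modular twist; dualizing and using that $\pi$ is irreducible yields a nonzero, hence surjective (by irreducibility of $\pi_{HC}$), map $(\Ind_P^G \sigma')_{HC}\twoheadrightarrow \pi_{HC}$ for a suitable finite-dimensional $\sigma'$.

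Finally I would globalize: apply the Casselman--Wallach smooth globalization functor to this surjection. Exactness of this functor on the category of finite-length Harish-Chandra modules turns the epimorphism of Harish-Chandra modules into an epimorphism $\Ind_{P(\R)}^{G(\R)}(\sigma') \twoheadrightarrow \pi$ of smooth admissible \Fre representations of moderate growth, which is the assertion. \emph{The main obstacle} is the nonvanishing and finiteness of the Jacquet module --- i.e. Casselman's theorem that $H_0(\mathfrak n,\pi_{HC})\neq 0$, which is exactly where the comparison with $\mathfrak n$-adically completed modules and the subrepresentation theorem enter; everything else (Frobenius reciprocity, irreducibility forcing surjectivity, exact globalization) is formal once that input is in hand. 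Since this is a cited result (\cite[Proposition 8.23]{CM}), in the paper I would simply record the statement and refer to the literature rather than reproduce the proof.
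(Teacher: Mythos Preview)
The paper does not prove this theorem; it merely records the statement and cites \cite[Proposition 8.23]{CM}. You correctly recognize this at the end of your proposal, and the sketch you give (Casselman's subrepresentation theorem via nonvanishing of $\mathfrak n$-homology, Frobenius reciprocity, then Casselman--Wallach globalization) is the standard argument behind the cited result.
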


\subsection{Proof of Theorem \ref{thm:main} and Proposition \ref{prop:S}}\label{subsec:PfMain}

Theorem \ref{thm:main} follows from \Cref{thm:geo} and Proposition \ref{prop:S}.
{\sloppy
\begin{proof}[Proof of Proposition \ref{prop:S}]
Let $\xi$ be a \DimaD{relative} character of a smooth admissible \Fre  representation $\pi$ of $G\DimaG{(\R)}$ of moderate growth with respect to a pair of subgroups $(H_1\DimaG{(\R)},H_2\DimaG{(\R)})$ and their characters $\chi_1,\chi_2$. By \Cref{thm:CentFin}, \Rami{  there exists an ideal $I \subset \fz(\cU(\fg))$ of finite codimension that annihilates $\pi$ and thus  annihilates $\xi$.} For any element $z \in \fz(\cU(\fg))$, there exists a polynomial $p$ such that $p(z)\in I$ and thus $p(z)\xi=0$. This implies that the symbol of any  $z \in \fz(\cU(\fg))$ of positive degree vanishes on the singular support of $\xi$. It is well-known  that
the joint  zero-set of these symbols over each point $g\in G$ is the nilpotent cone $\cN(\fg^*)$. Since $\xi$ is $(\DimaC{\fh}_1\times \DimaC{\fh}_2, \chi_1\times \chi_2)$-equivariant,
this implies that the singular support of $\xi$ lies in $S$.
\end{proof}}
\subsection{Proof of Theorem \ref{thm:IntroMult}}\label{subsec:PfIntroMult}
%
%
Part \eqref{it:Fin} follows immediately from \Cref{thm:FinOrb} and the Casselman embedding theorem. If $G$ is quasi-split then so does part \eqref{it:Bound}.
For the proof of part \eqref{it:Bound} in the general case, we will need the following lemma.

\begin{lem}\label{lem:SphCharBound}
\Rami{Let $G$ be \DimaG{a reductive group $G$ defined over $\R$}  and $H_1,H_2$ be spherical subgroups. Let $Y=\mathrm{Spec}(\fz(\cU(\fg)))\Andrey{\times Y_1\times Y_2}$, \Andrey{where $Y_i$ is the variety of characters of $\ha_i=\mathrm{Lie\,}H_i$}. For any $\lambda\in Y(\C)$, define $U_{\lambda,\Andrey{\chi_1,\chi_2}}:=\Sc^*(G\DimaG{(\R)})^{\Andrey{\ha_1\times \ha_2,(\chi_1,\chi_2)},(\fz(\cU(\fg)),\lambda)}$ to be the space of tempered distributions on $G$ that are left \Andrey{$\chi_1$-equivariant with respect to $\ha_1$, right $\chi_2$-equivariant with respect to $\ha_2$} and are eigendistributions with respect to the action of $\fz(\cU(\fg))$ with eigencharacter $\lambda$. Then $\dim  U_{\lambda,\chi_1,\chi_2}$ is bounded over $Y(\C)$}.
\end{lem}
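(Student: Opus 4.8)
The plan is to realize $U_{\lambda,\chi_1,\chi_2}$ as $\Hom_{D_{G_{\C}}}(\cM_y,\Sc^*(\bfG(\R)))$ for a holonomic $D_{G_{\C}}$-module $\cM_y$ depending algebraically on the point $y=(\lambda,\chi_1,\chi_2)$, and then to combine \Cref{thm:FinSol}, which bounds this $\Hom$-space by $\deg(\cM_y)$, with \Cref{thm:DegBound}, which bounds $\deg(\cM_y)$ uniformly over $y$. First note that by the Harish-Chandra and Chevalley isomorphisms $\mathrm{Spec}(\fz(\cU(\fg)))$ is an affine space, so $Y$ is a smooth affine variety; moreover $Y$ carries a tautological $\cO(Y)$-valued algebra homomorphism $\lambda^{\mathrm{un}}\colon\fz(\cU(\fg))\to\cO(Y)$ and tautological $\cO(Y)$-valued Lie algebra characters $\chi_i^{\mathrm{un}}\colon\fh_i\to\cO(Y)$, specializing at a $\C$-point $y=(\lambda,\chi_1,\chi_2)$ to $\lambda,\chi_1$ and $\chi_2$.

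To build the family, regard $\fz(\cU(\fg))$ as the algebra of bi-invariant differential operators on $G_{\C}$, and for $\alpha\in\fh_1$, $\beta\in\fh_2$ let $R_\alpha,L_\beta\in D_{G_{\C}}$ be the right-invariant and left-invariant vector fields generating the respective one-sided translations. Inside $D_{G_{\C},Y}=D_{G_{\C}}\otimes_{\C}\cO(Y)$, let $\mathcal J$ be the right ideal generated by the elements $z\otimes 1-1\otimes\lambda^{\mathrm{un}}(z)$ for $z\in\fz(\cU(\fg))$, by $R_\alpha\otimes 1-1\otimes\chi_1^{\mathrm{un}}(\alpha)$ for $\alpha\in\fh_1$, and by $L_\beta\otimes 1-1\otimes\chi_2^{\mathrm{un}}(\beta)$ for $\beta\in\fh_2$, and set $\cM:=D_{G_{\C},Y}/\mathcal J$. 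Being cyclic, $\cM$ is a coherent family of $D_{G_{\C}}$-modules parametrized by $Y$, and its specialization at $y=(\lambda,\chi_1,\chi_2)$ is $\cM_y=D_{G_{\C}}/\mathcal J_y$, where $\mathcal J_y$ is generated by $z-\lambda(z)$, $R_\alpha-\chi_1(\alpha)$ and $L_\beta-\chi_2(\beta)$. Unwinding the left and right $(\fh_i,\chi_i)$-equivariance conditions into these first-order equations, in the spirit of the proof of \Cref{lem:action}, identifies $\Hom_{D_{G_{\C}}}(\cM_y,\Sc^*(\bfG(\R)))$ with $U_{\lambda,\chi_1,\chi_2}$.

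Next I would verify that every specialization $\cM_y$, $y\in Y(\C)$, is holonomic. Since $\mathcal J_y$ is generated by $z-\lambda(z)$ ($z\in\fz(\cU(\fg))$), $R_\alpha-\chi_1(\alpha)$ ($\alpha\in\fh_1$) and $L_\beta-\chi_2(\beta)$ ($\beta\in\fh_2$), the characteristic variety of $\cM_y$ is contained in the common zero locus of the symbols of these operators, which is exactly the computation carried out in the proof of \Cref{prop:S}: the symbols of the positive-degree elements of $\fz(\cU(\fg))$ have common zero locus the nilpotent cone over each point of $\bfG$, and the symbols of the $R_\alpha$ and $L_\beta$ impose $\langle\cdot,\fh_1\rangle=0$ and $\langle\cdot,\Ad^*(g)\fh_2\rangle=0$. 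Hence $\SIS(\cM_y)\subseteq\bfS$, so $\dim\SIS(\cM_y)\le\dim\bfS=\dim\bfG$ by \Cref{thm:geo}, and by the Bernstein inequality $\cM_y$ is zero or holonomic.

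Combining the pieces: by \Cref{thm:FinSol}, $\dim U_{\lambda,\chi_1,\chi_2}=\dim\Hom_{D_{G_{\C}}}(\cM_y,\Sc^*(\bfG(\R)))\le\deg(\cM_y)$ for each $y\in Y(\C)$, and by \Cref{thm:DegBound} the right-hand side is bounded uniformly over $Y(\C)$; this proves the lemma. I expect the two steps needing genuine care, rather than bookkeeping, to be the precise dictionary between the one-sided $(\fh_i,\chi_i)$-equivariance of distributions on $\bfG(\R)$ and right-ideal relations in $D_{G_{\C},Y}$ — tracking invariant vector fields, their transposes and the normalization of the Haar measure — and the verification that the generators of $\mathcal J$ really confine the characteristic variety of $\cM_y$ to $\bfS$; the latter is precisely where \Cref{thm:geo} does the essential work.
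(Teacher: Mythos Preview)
Your proposal is correct and follows essentially the same approach as the paper: construct the cyclic family $\cM=D_{G_{\C},Y}/\mathcal J$ with $\mathcal J$ generated by the center and equivariance relations, identify $U_{\lambda,\chi_1,\chi_2}$ with $\Hom(\cM_y,\Sc^*(G(\R)))$, bound the singular support by $\bfS$ as in \Cref{prop:S}, and conclude via \Cref{thm:geo}, \Cref{thm:FinSol}, and \Cref{thm:DegBound}. Your explicit remark that $Y$ is a smooth affine variety (via the Harish-Chandra isomorphism) is a detail the paper leaves implicit but which is indeed required to invoke \Cref{thm:DegBound}.
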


\begin{proof}
\Rami{
Let us construct a family of $D(G)$-modules $\cM$ parameterized by $Y$. For any $\alpha \in \fg$, let $r_\alpha$ and $l_\alpha$ be the corresponding right and left invariant vector fields on $G$ considered as  elements in  $D(G,Y)$. For any $\beta \in \fz(\cU(\fg))$, \Andrey{$\alpha_i\in\ha_i$, let $f_\beta,g^i_{\alpha_i}$ be the functions on $Y$ that send $(\mu,\gamma_1,\gamma_2)\in Y$ to $\mu(\beta), \gamma_i(\alpha_i)$ respectively}. Let also $d_\beta$ be the differential operator on $G$ corresponding to $\beta$, \DimaC{such that
$d_{\beta}\xi=\beta\xi$ for any distribution $\xi$ on $G\DimaG{(\R)}$}.
We consider \Andrey{$d_\beta,r_{\alpha_1},l_{\alpha_2},f_\beta,g^{i}_{\alpha_i}$} as elements of $D(G,Y)$. Let $I \subset D(G,Y)$ be the ideal generated by \DimaC{$r_{\alpha_1}\Andrey{-g^{1}_{\alpha_1}}$, $l_{\alpha_2}\Andrey{-g^{2}_{\alpha_2}}$ and $f_\beta \DimaC{-} d_\beta$ where $\alpha_i\in\fh_i$} and $\beta\in \fz(\cU(\fg))$. Define $\cM:=D(G,Y)/I$.

It is easy to see that $U_{\lambda,\chi_1,\chi_2} \cong \Hom(\cM_{(\lambda,\chi_1,\chi_2)},\Sc^*(G\DimaG{(\R)}))$. As in the proof of Proposition \ref{prop:S}, the singular support of $\cM_{(\lambda,\chi_1,\chi_2)}$ lies in $S$, for any $\lambda,\chi_1,\chi_2$. By Theorem \ref{thm:geo}, $\cM_{(\lambda,\chi_1,\chi_2)}$ is holonomic and, therefore, $\cM$ is holonomic. By Theorem \ref{thm:FinSol}, we have $\dim U_{\lambda,\chi_1,\chi_2} \leq \deg \cM_{(\lambda,\chi_1,\chi_2)}$. By \Cref{thm:DegBound}, $\deg \cM_{(\lambda,\chi_1,\chi_2)}$ are bounded.}
\end{proof}


\begin{proof}[Proof of Theorem \ref{thm:IntroMult}\eqref{it:Bound}]

We choose an involution $\theta$ as in \Cref{lem:GK}, let $H_1:=H, \, H_2:=\theta(H)$, and  define the spaces $U_{\lambda}$ as in \Cref{lem:SphCharBound}.

Now let $\pi\in \Irr(G\DimaG{(\R)})$ \Andrey{and let $\chi$ be a character of $\fh$} such that $(\pi^*)^{\fh,\chi}\neq 0$. Let ${\lambda}$ stand for the infinitesimal character of $\pi$. By \Cref{lem:GK}, $(\hat \pi^*)^{d\theta(\fh),\Andrey{d\theta(\chi)}}\neq 0$. Fix a non-zero $\phi \in (\hat\pi^*)^{{d\theta(\fh),\Andrey{d\theta(\chi)}}}$. Then $\phi$ defines an embedding $(\pi^*)^{\fh,\Andrey{\chi}} \hookrightarrow U_{\lambda,\Andrey{\chi,d\theta(\chi)}}$ by $\psi \mapsto \xi_{\psi,{\phi}}$, where $\xi_{\psi,{\phi}}$ is the \DimaD{relative} character, which is defined by $\xi_{\psi,{\phi}}(f):=\langle \psi , \pi(f) \phi\rangle$. Thus, $\dim (\pi^*)^{\fh,\Andrey{\chi}} \leq \dim U_{\lambda,\Andrey{\chi,d\theta(\chi)}}$, which is bounded by \Cref{lem:SphCharBound}.
\end{proof}

\appendix

\section{Proof of Lemma \ref{lem:PullDist}}\label{app:dist}
\setcounter{theorem}{0}
For the proof, we will need the following standard lemmas.
Let $M$ be a smooth manifold and $N \subset M$ be a closed smooth submanifold.

\begin{lemma}\label{lem:id}
 Denote $I_N:=\{f \in \Cc(M) \, \vert f|_N=0\}$.
Let $J\subset I_N$ be an ideal in $\Cc(M)$ such that
\begin{enumerate}
\item For any $x\in N$, the space $\{d_xf\,|\,f\in J\}$ is the conormal space to $N$ in  $M$ at the point $x$.
\item For any $x \in M \setminus N$, there exists $f \in J$ such that $f(x)\neq 0$.
\end{enumerate}
Then $J=I_N$.
\end{lemma}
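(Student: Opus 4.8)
\emph{Proof proposal.} The inclusion $J\subseteq I_N$ is the hypothesis, so the plan is to prove $I_N\subseteq J$ by a standard localize--and--glue argument: near $N$ one decomposes an element of $I_N$ using Hadamard's lemma and hypothesis (1), away from $N$ one uses hypothesis (2), and then one patches with a partition of unity, exploiting that $J$ is an ideal of $\Cc(M)$.

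\emph{Local decomposition near $N$.} Fix $p\in N$ and let $k=\operatorname{codim}_M N$. Since $J$ is a linear subspace of $\Cc(M)$, the set $\{d_pf\mid f\in J\}$ is a subspace of $T_p^*M$, and by hypothesis (1) it equals the $k$-dimensional conormal space of $N$ at $p$; choose $f_1,\dots,f_k\in J$ with $d_pf_1,\dots,d_pf_k$ a basis of it. Complete with functions $y_{k+1},\dots,y_n$ so that $(f_1,\dots,f_k,y_{k+1},\dots,y_n)$ is a coordinate chart on some open cube $U_p\ni p$. Shrinking $U_p$ if needed, we have $N\cap U_p=\{f_1=\dots=f_k=0\}$: the left side is contained in the right since each $f_i\in J\subseteq I_N$ vanishes on $N$, and the right side is a submanifold of $U_p$ of dimension $n-k=\dim N$ containing $p$, so the two coincide near $p$. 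Now let $f\in I_N$. In these coordinates $f$ vanishes on $\{f_1=\dots=f_k=0\}$, so Hadamard's lemma (Taylor expansion with integral remainder in the first $k$ coordinates) yields smooth functions $h_1,\dots,h_k$ on $U_p$ with $f|_{U_p}=\sum_{i=1}^k f_i\,h_i$. For $p\in M\setminus N$, hypothesis (2) gives $g\in J$ with $g(p)\neq0$; then $g$ is nonvanishing on some open $U_p\ni p$ and $f|_{U_p}=g\cdot(f/g)$ with $f/g$ smooth on $U_p$. In both cases we obtain an open neighborhood $U_p$ on which $f=\sum_i g_{p,i}\,h_{p,i}$ with finitely many terms, $g_{p,i}\in J$ and $h_{p,i}\in C^\infty(U_p)$.

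\emph{Gluing.} Let $f\in I_N$ and put $K:=\operatorname{Supp}(f)$, a compact set. The sets $\{U_p\}_{p\in K}$ cover $K$; extract a finite subcover $U_1,\dots,U_m$, and let $\rho_0,\rho_1,\dots,\rho_m$ be a partition of unity on $M$ subordinate to $\{M\setminus K,\,U_1,\dots,U_m\}$ with $\operatorname{Supp}(\rho_j)$ compact for $j\geq1$. Then $\rho_0 f=0$ since $f$ vanishes on $M\setminus K\supseteq\operatorname{Supp}(\rho_0)$, while for $j\geq1$ we have $\rho_j f=\sum_i g_{j,i}\,(\rho_j h_{j,i})$, where $\rho_j h_{j,i}\in\Cc(M)$ after extension by zero; since $J$ is an ideal and $g_{j,i}\in J$, each term lies in $J$, so $\rho_j f\in J$. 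Hence $f=\sum_{j=0}^m\rho_j f\in J$, proving $I_N\subseteq J$.

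I expect the only genuinely delicate point to be the construction of the local coordinate chart near $N$ together with the identification $N\cap U_p=\{f_1=\dots=f_k=0\}$, as this is exactly what licenses the application of Hadamard's lemma; the remaining steps are routine bookkeeping with the partition of unity and the compact support of $f$.
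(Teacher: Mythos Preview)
Your argument is correct. Both your proof and the paper's reduce, via partition of unity, to showing that every $f\in I_N$ lies in $J$ locally, and away from $N$ this is immediate from hypothesis (2). The difference is in how the local statement near $N$ is handled.

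The paper proceeds by \emph{induction on the codimension} $d$: for the base case $d=1$ it invokes the implicit function theorem to reduce to $\R^{n-1}\subset\R^n$, and for the inductive step it picks a single $g\in J$ with $d_xg\neq 0$, forms the hypersurface $Z=\{g=0\}$, applies the inductive hypothesis to the pair $N\subset Z$ (now of codimension $d-1$), and then uses the $d=1$ case to strip off the last factor of $g$. Your route is more direct: you pick all $k$ generators $f_1,\dots,f_k\in J$ at once, build a single adapted chart in which $N$ becomes a coordinate slice, and invoke the $k$-variable Hadamard lemma in one shot. The content is the same---Hadamard's lemma plus the ideal property---but your organization avoids the inductive scaffolding at the cost of needing the multi-variable Hadamard statement and a slightly more careful verification that $N\cap U_p=\{f_1=\dots=f_k=0\}$ (which you supply). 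Either approach is standard; yours is arguably cleaner.
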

\begin{proof} Using partition of unity, it is enough to show that, for any $f\in I_N$ and $x \in M$, there exists $f' \in J$ such that $f$ coincides with $f'$ in a neighborhood of $x$. For $x \notin N$ this is obvious, so we assume that $x \in N$. We prove the statement by induction on the codimension $d$ of $N$ in $M$. The base case $d=1$ follows, using the implicit function theorem, from  the case $N=\R^{n-1}\subset \R^{n}=M$, which is obvious.

For the induction step, take {an element} $g\in J$ such that $d_xg\neq 0$. Let $$Z:=\{y \in M \, \vert \, g(y)=0\} \text{ and }U:=\{y\in M \, \vert \,d_yg\neq 0 \}. $$ By the implicit function theorem,  $U \cap Z $ is a closed submanifold of $U$. Choose $\rho\in \Cc(M)$ such that $\rho=1$ in a neighborhood of $x$ and $\Supp(\rho) \subset U$. Let $\bar f:=(\rho f)|_{U \cap Z}$. Let $$\bar J := \{\alpha |_{U \cap Z} \vert \alpha \in J \text{ and } \Supp \alpha \subset U\}.$$ By the induction hypothesis, $\bar f \in \bar J.$  Thus, there exists $f'' \in J$ such that $f-f''$ vanishes in a neighborhood of $x$ in $Z$. Now, the case $d=1$ implies that  there exists $\alpha \in \Cc(M)$ such that $f-f''$ coincides with $\alpha g$ in a neighborhood of $x$.
\end{proof}

\begin{lemma}\label{lem:OpenMap}
The restriction $\Cc(M) \to \Cc(N)$ is an open map.
\end{lemma}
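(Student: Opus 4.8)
The plan is to deduce openness from the existence of a continuous linear \emph{extension operator} $e\colon\Cc(N)\to\Cc(M)$ splitting the restriction map $r\colon\Cc(M)\to\Cc(N)$, i.e. with $r\circ e=\mathrm{id}_{\Cc(N)}$. Such a section forces $r$ to be open: given an open $U\subseteq\Cc(M)$ and $x\in U$, the set $e^{-1}(U-x)$ is an open neighbourhood of $0$ in $\Cc(N)$, and every $w$ in it satisfies $e(w)+x\in U$, hence $w+r(x)=r\big(e(w)+x\big)\in r(U)$; thus $r(U)$ is a neighbourhood of $r(x)$, and since $x$ was arbitrary, $r(U)$ is open. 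So the whole task reduces to constructing $e$.

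To construct $e$ I would use a tubular neighbourhood. Fix a Riemannian metric on $M$, let $\pi\colon\nu\to N$ be the normal bundle of $N$ with the induced fibre metric $\|\cdot\|$, and write $\nu_{<\rho}=\{v:\|v\|<\rho\}$. The tubular neighbourhood theorem supplies an open $W\supseteq N$ in $M$ and a diffeomorphism $\Phi\colon\nu_{<3}\xrightarrow{\sim}W$ restricting to the identity on the zero section $N$. Choose $\beta\in C^\infty([0,\infty))$ with $\beta\equiv1$ on $[0,1]$ and $\Supp\beta\subseteq[0,2]$, and set $\chi(v):=\beta(\|v\|)$; then $\chi$ is smooth on $\nu$, equals $1$ near the zero section, and meets each fibre in a compact set. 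For $f\in\Cc(N)$ define $e(f)$ to equal $\big(\chi\cdot\pi^{*}f\big)\circ\Phi^{-1}$ on $W$ and $0$ off $\Phi\big(\nu_{\le2}\cap\pi^{-1}(\Supp f)\big)$. Here $\nu_{\le2}\cap\pi^{-1}(\Supp f)$ is a closed subset of the radius-$2$ disc bundle over the compact set $\Supp f$, hence compact, so its $\Phi$-image is a compact subset of $W$; the two partial definitions agree on the overlap (where either $\chi$ or $\pi^{*}f$ vanishes) and glue to a smooth, compactly supported function on $M$. Thus $e$ is a well-defined linear map $\Cc(N)\to\Cc(M)$, and since $\Phi$ is the identity on $N$, $\chi\equiv1$ there, and $\pi^{*}f(0_x)=f(x)$, we get $r(e(f))=f$.

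Finally I would check that $e$ is continuous for the usual (LF) topologies. It suffices to fix a compact $K\subseteq N$ and show that $e$ maps the \Fre space of functions supported in $K$ continuously into $\Cc(M)$; by construction such functions go into the fixed \Fre space of functions supported in the compact set $K':=\Phi\big(\nu_{\le2}\cap\pi^{-1}(K)\big)$, and the Leibniz rule (for the factor $\chi$) together with the chain rule (for precomposition with the fixed $\Phi^{-1}$) give estimates $\|e(f)\|_{C^{m}(K')}\le C_m\|f\|_{C^{m}(K)}$ with $C_m$ independent of $f$, which is exactly the required continuity. The one step that needs care — and that I expect to be the only real obstacle — is the control of supports: a plain pullback $\pi^{*}f$ is \emph{not} compactly supported because the fibres of $\nu$ are non-compact, so the fibrewise cutoff $\chi$ is essential, and one must verify that $(\chi\cdot\pi^{*}f)\circ\Phi^{-1}$ genuinely extends by zero to a smooth function on all of $M$. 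An equivalent route avoiding the normal bundle is to cover a neighbourhood of $N$ by a locally finite family of charts adapted to $N$, extend $f$ chart-by-chart by multiplying with a transverse bump function, and glue with a subordinate partition of unity; the same support and seminorm bookkeeping then finishes the argument.
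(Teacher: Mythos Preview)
Your proof is correct, but it takes a genuinely different route from the paper's. The paper argues as follows: for each compact $K\subset M$ it produces a larger compact $K'\supset K$ so that the restriction $C^{\infty}_{K'}(M)\to C^{\infty}_{K'\cap N}(N)$ between \Fre spaces is \emph{surjective} (this is the step done ``using the partition of unity''), then invokes the open mapping theorem for \Fre spaces to get openness on each step, and passes to the inductive limit. In other words, the paper's scheme is \emph{surjectivity $+$ open mapping theorem}, with no explicit extension operator.

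Your approach instead builds a continuous linear section $e:\Cc(N)\to\Cc(M)$ via a tubular neighbourhood and a fibrewise cutoff, and deduces openness formally from $r\circ e=\mathrm{id}$. This is a stronger conclusion (a continuous splitting, not just openness), and it bypasses the open mapping theorem entirely, which is a mild advantage since the passage from \Fre-level openness to LF-level openness in the paper's sketch is left implicit. The price you pay is the explicit geometric construction and the seminorm bookkeeping, whereas the paper's argument is softer and shorter. Both are standard and valid; your version is more self-contained, the paper's is more economical.
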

\begin{proof}
Let $K\subset M$ be a compact subset. It is easy to see that there exists a compact $K' \supset K$ such that the restriction map $\mathrm{C}^{\infty}_{K'}(M)\to \mathrm{C}^{\infty}_{K'\cap N}(N)$ is onto, using the partition of unity. By  the Banach open map theorem this map is open. Thus, the restriction $\Cc(M) \to \Cc(N)$ is an open map.
\end{proof}

Let $Y$ be \DimaG{an affine algebraic manifold defined over $\R$} and $X$ be a closed algebraic submanifold. Let $i:X \to Y$ denote the embedding.

\begin{lem}\label{lem:Emb}
Let $\xi$ be a  distribution on $X\DimaG{(\R)}$ such that $i_*\xi$ is a tempered distribution. Then $\xi$ is a tempered distribution.
\end{lem}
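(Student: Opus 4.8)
The plan is as follows. Write $r\colon\Sc(Y(\R))\to\Sc(X(\R))$ for the restriction map $g\mapsto g|_{X(\R)}$. By the definition of $i_*$ for distributions, the hypothesis means that there is a (necessarily unique) continuous functional $\eta\in\Sc^*(Y(\R))$ with $\langle\eta,g\rangle=\langle\xi,r(g)\rangle$ for every $g\in\Cc(Y(\R))$. It suffices to produce a continuous functional $\bar\xi$ on $\Sc(X(\R))$ whose restriction to $\Cc(X(\R))$ equals $\xi$: since $\Cc(X(\R))$ is dense in $\Sc(X(\R))$, such a $\bar\xi$ is then the desired tempered extension of $\xi$.

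The first step is to check that $\eta$ annihilates $\ker r=\{g\in\Sc(Y(\R)):g|_{X(\R)}=0\}$. Fix such a $g$. Using a Nash embedding $Y(\R)\hookrightarrow\R^N$ and a function $\rho\in\Cc(\R)$ with $\rho\equiv 1$ near $0$, put $\psi_R(y)=\rho(\|y\|/R)$. Then $\psi_R g\in\Cc(Y(\R))$, $(\psi_R g)|_{X(\R)}=0$, and $\psi_R g\to g$ in the Fr\'echet topology of $\Sc(Y(\R))$ as $R\to\infty$ — the standard fact that such truncations approximate Schwartz functions. Hence $\langle\eta,g\rangle=\lim_{R}\langle\eta,\psi_R g\rangle=\lim_{R}\langle\xi,(\psi_R g)|_{X(\R)}\rangle=0$. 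This step uses nothing about $\eta$ beyond its defining formula on compactly supported functions together with its continuity on $\Sc(Y(\R))$.

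The second step is to factor $\eta$ through $r$. Since $X(\R)$ is a closed Nash submanifold of the affine Nash manifold $Y(\R)$, the restriction map $r$ is surjective (see \cite{AGSc}); as $\Sc(Y(\R))$ and $\Sc(X(\R))$ are Fr\'echet spaces, the open mapping theorem shows $r$ is open, hence a topological quotient map. Because $\eta$ is continuous and vanishes on $\ker r$, it descends to a continuous linear functional $\bar\xi$ on $\Sc(X(\R))$ with $\eta=\bar\xi\circ r$. Finally, for $f\in\Cc(X(\R))$ choose $g\in\Cc(Y(\R))$ with $r(g)=f$ (restriction of compactly supported functions along a closed submanifold is surjective); then $\langle\bar\xi,f\rangle=\langle\bar\xi,r(g)\rangle=\langle\eta,g\rangle=\langle\xi,r(g)\rangle=\langle\xi,f\rangle$. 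Thus $\bar\xi$ is a tempered distribution extending $\xi$, so $\xi$ is tempered.

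The only inputs that are not purely formal are the basic facts about Schwartz functions on Nash manifolds used above: that restriction to a closed Nash submanifold is surjective, and that compactly supported functions are dense in the Schwartz space via the truncations $\psi_R$. I expect the main point to watch is conceptual rather than computational: one should resist trying to build an explicit continuous linear section of $r$ (which would force one to exhibit a transverse extension operator and to know that $\eta$ carries no ``transverse derivatives''), and instead observe that the vanishing of $\eta$ on $\ker r$ — immediate from $\langle\eta,g\rangle=\langle\xi,g|_{X(\R)}\rangle$ plus the truncation argument — is exactly what makes the quotient-map factorization go through.
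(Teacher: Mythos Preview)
Your proof is correct and follows essentially the same route as the paper: use surjectivity of the restriction $r=i^*:\Sc(Y(\R))\to\Sc(X(\R))$ (from \cite{AGSc}), apply the open mapping theorem, show $i_*\xi$ vanishes on $\ker r$, and factor through the quotient. The only difference is cosmetic: the paper dismisses the vanishing on $\ker r$ as ``easy to see,'' whereas you spell it out via the truncation/density argument.
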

\begin{proof}
The map $i_*$ is dual to the pullback map $C_c^{\infty}(Y\DimaG{(\R)})\to C_c^{\infty}(X\DimaG{(\R)})$. This can be extended to a continuous map $i^*:\Sc(Y\DimaG{(\R)})\to \Sc(X\DimaG{(\R)})$ which is onto by \cite[Theorem 4.6.1]{AGSc}. The Banach open map theorem implies that $i^*$ is an open map. It is easy to see that $i_{*}\xi:\Sc(Y\DimaG{(\R)})\to \C$ vanishes on $Ker(i^*)$, and thus it gives rise to a continuous map $\Sc(X\DimaG{(\R)})\to \C$, which extends $\xi$.
\end{proof}

\begin{lem}\label{lem:DistPush}
Let $ \xi$ be a \DimaG{complex valued} distribution on $Y\DimaG{(\R)}$ such that $p\xi=0$ for any polynomial $p$ on $Y$ that vanishes on $X$. Then $\xi$ is a pushforward of a distribution on $X\DimaG{(\R)}$.
\end{lem}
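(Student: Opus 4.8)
The claim is that a (complex-valued) distribution $\xi$ on $Y(\R)$ annihilated by the ideal $I_X$ of polynomials vanishing on $X$ is pushed forward from $X(\R)$. The strategy is to reduce to the analytic/smooth category and invoke \Cref{lem:id} and \Cref{lem:OpenMap}. First I would localize: pushforward from a closed submanifold is a local statement on $Y(\R)$ in the sense that if, for every point $y\in Y(\R)$, the restriction $\xi|_{V}$ to some open neighborhood $V$ of $y$ is a pushforward from $X(\R)\cap V$, then a partition of unity glues these into a global pushforward. So it suffices to produce, near each $y$, a smooth function $f'$ vanishing on $X(\R)$ with prescribed $1$-jet along $X$, and this is exactly where the polynomial hypothesis is used in tandem with \Cref{lem:id}.

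\textbf{Key steps.} Step 1: Reduce to showing $\xi$ kills $I_{X(\R)}:=\{f\in\Cc(Y(\R))\mid f|_{X(\R)}=0\}$, since a distribution killing all smooth functions vanishing on a closed submanifold is (by the standard structure theory of distributions supported on a submanifold, together with $\xi$ having order $0$ transversally — more precisely: $\xi$ vanishing on $I_{X(\R)}$ means $\xi$ factors through the restriction $\Cc(Y(\R))\to \Cc(X(\R))$, which by \Cref{lem:OpenMap} is open, hence $\xi$ descends to a continuous functional on $\Cc(X(\R))$, i.e.\ a distribution on $X(\R)$ whose pushforward is $\xi$). Step 2: Let $J\subset \Cc(Y(\R))$ be the ideal generated by $I_X(Y)$, the \emph{polynomials} vanishing on $X$, inside the ring $\Cc(Y(\R))$. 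I claim $J$ satisfies hypotheses (1) and (2) of \Cref{lem:id} with $M=Y(\R)$, $N=X(\R)$: hypothesis (2) is because a point of $Y(\R)\setminus X(\R)$ is separated from $X$ by some polynomial in $I_X(Y)$ (as $X$ is Zariski closed in the affine $Y$); hypothesis (1) — that the differentials $d_xf$, $f\in I_X(Y)$, span the conormal space to $X(\R)$ at each $x\in X(\R)$ — holds because $X$ is a smooth subvariety, so its ideal is generated by polynomials whose differentials cut out $T_xX$ (regularity of the embedding). Hence by \Cref{lem:id}, $J=I_{X(\R)}$. Step 3: Since $\xi$ annihilates every polynomial in $I_X(Y)$ by hypothesis, and the action of $\Cc(Y(\R))$ on distributions is by multiplication, $\xi$ annihilates the whole ideal $J$ they generate; but $J=I_{X(\R)}$, so $\xi$ annihilates $I_{X(\R)}$, which is the input to Step 1.

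\textbf{Main obstacle.} The routine but genuine point is the descent in Step 1: from ``$\xi$ vanishes on $I_{X(\R)}$'' to ``$\xi=i_*\eta$ for a \emph{distribution} $\eta$ on $X(\R)$,'' one needs the restriction map $i^*:\Cc(Y(\R))\to \Cc(X(\R))$ to be continuous, open, and surjective so that the induced functional on the quotient $\Cc(Y(\R))/\ker i^*\cong \Cc(X(\R))$ is continuous; surjectivity and continuity are standard, and openness is \Cref{lem:OpenMap}. The other delicate point is verifying hypothesis (1) of \Cref{lem:id} for the ideal generated by polynomials rather than by all smooth functions — i.e.\ that the \emph{algebraic} conormal, computed from polynomial equations of the smooth variety $X$, agrees with the smooth conormal bundle of $X(\R)$ in $Y(\R)$; this follows from smoothness of $X$ and the Jacobian criterion, but should be stated carefully. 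Once these are in place the rest is bookkeeping with partitions of unity, exactly parallel to the proof of \Cref{lem:id} itself.
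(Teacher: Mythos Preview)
Your proposal is correct and follows essentially the same route as the paper: form the ideal $J=I_X(Y)\cdot\Cc(Y(\R))$, apply \Cref{lem:id} to get $J=I_{X(\R)}$, then use \Cref{lem:OpenMap} to descend $\xi$ to a distribution on $X(\R)$. The paper's proof is terser and does not spell out the verification of the two hypotheses of \Cref{lem:id} (your Step~2) or the descent argument via openness (your Step~1), but the skeleton is identical; the localization/partition-of-unity framing in your Plan is unnecessary and indeed you do not actually use it in your Steps~1--3.
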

\begin{proof}
Let $J(X)$ be the ideal of all polynomials on $Y$ that vanish on $X$. Let $J :=J(X) \Cc(Y\DimaG{(\R)})$. By Lemma \ref{lem:id} we have $J=I_{X\DimaG{(\R)}}$. Thus, $\xi$ vanishes on $I_{X\DimaG{(\R)}}$ and thus, by Lemma \ref{lem:OpenMap}, $\xi$ is a pushforward of a distribution on $X\DimaG{(\R)}$.
\end{proof}

Lemma \ref{lem:PullDist} follows from \Cref{lem:Emb,lem:DistPush}  and the definition of $i^!$ for  closed embedding of smooth affine varieties. 

\sloppy
\printbibliography
\end{document}